\documentclass[12pt,english]{amsart}
\usepackage[cp1251]{inputenc}
\usepackage[english]{babel}
\usepackage{amsmath,amsthm,amssymb}
\newtheorem{theorem}{Theorem}[section]
\newtheorem{lemma}[theorem]{Lemma}
\newtheorem{corollary}[theorem]{Corollary}

\newtheorem{conjecture}[theorem]{Conjecture}
\numberwithin{equation}{section}

\theoremstyle{definition}
\newtheorem{definition}[theorem]{Definition}
\newtheorem{remark}[theorem]{Remark}

\begin{document}
	
\title[]{
Dimension-free Markov--Bernstein inequalities 
for product measures
}
	
\author{Egor Kosov}

\address{\noindent Egor Kosov,
Centre de Recerca Matem\`atica, Campus de Bellaterra, Edifici~C 08193
Bellaterra (Barcelona), Spain.}
\email{kosoved09@gmail.com}

\subjclass[2020]{Primary 41A17; Secondary 41A63, 42C10, 60E15, 28C20}

\keywords{Markov--Bernstein inequality; Gaussian measure; product measure; Freud weight; unimodal density}

\begin{abstract}
For even integer exponents $p$, we obtain dimension-free
Markov--Bernstein inequalities for polynomials with respect to product
probability measures $\mu=\mu_1\otimes\cdots\otimes\mu_n$.
When the measures $\mu_j$ have unimodal densities, we prove
\[
\|\nabla f\|_{L^p(\mu)}
\le
C(p) d^2
\|f\|_{L^p(\mu)}
\]
for polynomials $f$ of degree at most $d$.
The dependence on $d$ is optimal already for the uniform distribution on the
unit cube. For products of one-dimensional Freud measures with densities
proportional to $e^{-|t|^{2m}}$, the factor $d^2$ can be replaced by
$d^{1-\frac{1}{2m}}$.

In the Gaussian case, for all $p\ge4$, we prove that
\[
\|\nabla f\|_{L^p(\gamma^n)}
\le
C(p)d^{\frac12+\theta_p}\|f\|_{L^p(\gamma^n)}
\]
for every polynomial $f$ of degree at most $d$, where
$\theta_p\le \frac{2}{3p}$ and $\theta_p=0$ whenever $p$ is an even integer.
Thus, in the even-integer case, we establish the sharp dependence on the
degree conjectured by Eskenazis--Ivanisvili. For general $p\ge4$, the
estimate improves upon their dimension-free inequality.
\end{abstract}

\maketitle

\section{Introduction}

Markov--Bernstein inequalities estimate norms of derivatives of algebraic
polynomials in terms of norms of the polynomials themselves. In the
one-dimensional case, the classical Markov inequality on an interval asserts
that
\[
\|f'\|_{L^\infty([-1,1])}
\le
d^2\|f\|_{L^\infty([-1,1])}
\]
for every algebraic polynomial $f$ of degree at most $d$. Bernstein's
inequality refines this estimate by introducing a weight which compensates for
the endpoint effect:
\[
\|(1-x^2)^{1/2}f'\|_{L^\infty([-1,1])}
\le
d\|f\|_{L^\infty([-1,1])}.
\]
The factors $d^2$ and $d$ in these inequalities are sharp, as can already be
seen from the Chebyshev polynomials. We refer to
\cite{BorweinErdelyi,KNT21} for detailed discussions of classical Markov and
Bernstein inequalities. Integral versions of such inequalities were studied,
for instance, in \cite{BE95, HST37}, while weighted variants were
developed in \cite{Fr71,Fr77,LeLu94,MT}. Besides their intrinsic
interest, these inequalities play an important role in many areas of analysis,
especially in approximation theory, see, for example, \cite[Chapter~4]{DL-book} and \cite[Chapter~8]{DT}.

Multivariate analogues of the classical Markov--Bernstein inequalities, with
the derivative replaced by the Euclidean norm of the gradient, have also been
extensively studied in both the uniform norm and integral $L^p$ norms; see, for
instance,
\cite{Kellogg28,KR99,Pierzchala16,Skalyga01,Wilhelmsen74}
and \cite{Dai06,IO22,Kr09,Kr20,Kr21,Kr24}, respectively.
A recent line of work of Kro\'o, Dai, and Prymak
\cite{DaiPrymak22,DaiPrymakMeshes24,DKP25,Kr11,Kr26}
demonstrates the importance of multivariate Markov--Bernstein inequalities in
the construction of optimal polynomial meshes and in Marcinkiewicz-type
discretization of $L^p$ norms.

Although there is a separate line of work on Markov--Bernstein inequalities in the uniform norm in Banach-space settings (see, for instance, \cite{Harr, RevSar, Sarant91, Skalyga97}), most of the integral Markov--Bernstein inequalities cited above belong to the classical finite-dimensional framework, in which the ambient dimension and the underlying domain are fixed and the constants may depend on both the dimension and the geometry of the domain.
In many modern problems at the interface of high-dimensional analysis and
probability, a different point of view is more natural. The ambient dimension
may be arbitrarily large, and one seeks estimates with constants independent
of the dimension. In the Gaussian setting, such dimension-free results for
polynomials are especially natural, since they can be transferred to abstract
Wiener spaces and, in particular, to finite sums of multiple stochastic
integrals, see \cite{Nualart06}.

Dimension-free estimates also arise naturally in questions concerning
anti-concentration of polynomials \cite{CarWr}, regularity of distributions of
polynomials in log-concave random vectors \cite{Kos18}, total variation
distance estimates between such distributions \cite{Kos21}, and oscillatory
integrals with polynomial phases \cite{Kos25}. In these problems, one often
needs to control the non-degeneracy of polynomial distributions, which is
typically measured in terms of the variance of the polynomial with respect to
the underlying measure. Markov--Bernstein-type inequalities are well suited
for this purpose, since, when applied iteratively, they relate the leading
coefficients of a polynomial to its $L^2$ norm, and hence to its variance after
centering. Related dimension-free coefficient estimates were recently studied
by Glazer--Mikulincer \cite{GM22,GM26}.

From this dimension-free viewpoint, integral Markov--Bernstein inequalities
for the full gradient seem to have been investigated much less systematically
and, to the best of our knowledge, are known only in a few specific settings.
Namely, Kro\'o--Szabados \cite{KrSz12} obtained $L^2$ Markov--Bernstein
inequalities for products of Gaussian and Gamma distributions, while general
$L^p$ Gaussian Markov--Bernstein inequalities were studied by
Eskenazis--Ivanisvili~\cite{EI}.

The main goal of this paper is to study dimension-free integral $L^p$
Markov--Bernstein inequalities with sharp dependence on the degree $d$.
For general high-dimensional measures, such estimates are not available for
the whole class of algebraic polynomials, even in the log-concave setting, as
already shown by the normalized Lebesgue measure on the isotropic Euclidean
ball~\cite{GM22}. 
We therefore focus on product measures and prove sharp dimension-free estimates
for even integer values of $p$ for products of measures with bounded unimodal
densities and for products of measures with certain Freud-type densities. For
Gaussian measures, in particular, our approach confirms the
Eskenazis--Ivanisvili conjecture for even integer exponents and also yields
improved estimates for all $p\ge4$.

\subsection{Gaussian Markov--Bernstein inequalities}
Let $\gamma$ denote the standard Gaussian measure on $\mathbb R$. Classical
results of Freud \cite{Fr71,Fr77} imply that, for $p\in[1,\infty)$,
\[
\|f'\|_{L^p(\gamma)}
\le
C(p)\sqrt d\,\|f\|_{L^p(\gamma)}
\]
for every polynomial $f$ of degree at most $d$.

Motivated by this one-dimensional estimate, Eskenazis and Ivanisvili
\cite{EI} formulated the following multidimensional conjecture.

\begin{conjecture}\label{conj}
For every $p\in[1,\infty)$, there exists a constant $C(p)>0$ such that, for every
$n,d\in\mathbb N$, one has
\[
\|\nabla f\|_{L^p(\gamma^n)}
\le
C(p)\sqrt d\,\|f\|_{L^p(\gamma^n)}
\quad \forall f\in \mathcal P_d(\mathbb R^n).
\]
\end{conjecture}
Here $\gamma^n$ denotes the $n$-fold product of the one-dimensional measure
$\gamma$, and $\mathcal P_d(\mathbb R^n)$ denotes the space of all algebraic
polynomials of degree at most $d$ in $n$ variables.

For $p=2$, the standard Hermite
polynomial expansion implies
\begin{equation}\label{eq-MB-2}
\|\nabla f\|_{L^2(\gamma^n)}
\le
\sqrt d\,\|f\|_{L^2(\gamma^n)}
\quad \forall f\in \mathcal P_d(\mathbb R^n).
\end{equation}

For $p\in[1,2)\cup(2,\infty)$, Eskenazis and Ivanisvili \cite{EI} proved the
following weaker estimate:
\begin{equation}\label{eq-EI-est}
\|\nabla f\|_{L^p(\gamma^n)}
\le
C(p)d^{
\frac12+\frac1\pi\arctan\bigl(\frac{|p-2|}{2\sqrt{p-1}}\bigr)}
\|f\|_{L^p(\gamma^n)}
\quad \forall f\in\mathcal P_d(\mathbb R^n).
\end{equation}

Our first result improves upon this estimate for $p\ge4$ and confirms
Conjecture~\ref{conj} for even integer exponents.

\begin{theorem}\label{th-1}
For every $p\ge4$, there is a constant $C(p)>0$ such that, for every
$n,d\in\mathbb N$ and every $f\in\mathcal P_d(\mathbb R^n)$, one has
\[
\|\nabla f\|_{L^p(\gamma^n)}
\le
C(p)d^{\frac12+\theta_p}\|f\|_{L^p(\gamma^n)},
\]
where
\[
\theta_p:=
\frac1\pi\arctan\biggl(
\frac{p-2\lfloor p/2\rfloor}
{2\sqrt{\lfloor p/2\rfloor(p-\lfloor p/2\rfloor)}}
\biggr)\le \frac{2}{3p}.
\]
In particular, if $p$ is an even integer, then $\theta_p=0$ and one may take
$C(p)=3\sqrt p$.
\end{theorem}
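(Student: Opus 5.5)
We first treat the case $p=2k$ with $k\ge2$ an integer. The idea is to write $\|\nabla f\|_{L^{2k}}^{2k}=\int |\nabla f|^{2k}\,d\gamma^n$ as $\int \langle\nabla f,\nabla f\rangle\,|\nabla f|^{2k-2}\,d\gamma^n$, and to integrate by parts with respect to the Ornstein--Uhlenbeck operator $L=\Delta-x\cdot\nabla$. This gives
\[
\int|\nabla f|^{2k}\,d\gamma^n=-\int f\,\bigl(Lf\,|\nabla f|^{2k-2}+(k-1)\langle\nabla f,\nabla|\nabla f|^2\rangle|\nabla f|^{2k-4}\bigr)\,d\gamma^n .
\]
The two terms on the right have to be controlled in a dimension-free way. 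The first is estimated by Hölder: $\|f\|_{2k}\|Lf\|_{2k}\|\nabla f\|_{2k}^{2k-2}$. Here $\|Lf\|_{2k}\lesssim d\|f\|_{2k}$, which is dimension-free. For instance, write $L$ as $\sum_j j P_j$ on chaoses and use hypercontractive/Meyer-type bounds; equivalently, $Lf=\frac{d}{dt}P_tf|_{t=0}$ is a "Bernstein inequality in $t$" for the trigonometric-like polynomial $t\mapsto P_{t}f$. This yields only $d^{1/k}$-type losses after taking roots, though, so it is too crude. The plan is instead to avoid $Lf$ and to exploit the $2k$-linear structure.

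The better plan for even $p$: expand $|\nabla f|^{2k}=\sum_{i_1,\dots,i_k}(\partial_{i_1}f)^2\cdots(\partial_{i_k}f)^2$. Then write $\|\nabla f\|_{2k}^{2k}=\mathbb E\,\bigl\langle \nabla f(x)^{\otimes 2},\ldots\bigr\rangle$ and use the Gaussian interpolation/Mehler representation. The key identity is $\partial_i P_t f=e^{-t}P_t\partial_i f$. Hence $\nabla f$ can be expressed through the derivative of $f(e^{-t}x+\sqrt{1-e^{-2t}}\,y)$ in $y$, so that $\nabla f$ is an average of $\frac{e^{-t}}{\sqrt{1-e^{-2t}}}\,y\,f(\cdot)$ composed with $P_t^{-1}$. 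Concretely, for each fixed direction one has
\[
\nabla P_tf(x)=\frac{e^{-t}}{\sqrt{1-e^{-2t}}}\int y\,f\bigl(e^{-t}x+\sqrt{1-e^{-2t}}y\bigr)\,\gamma^n(dy).
\]
Taking the $2k$-th power and using independent copies $y^{(1)},\dots,y^{(2k)}$, the inner products $\langle y^{(a)},y^{(b)}\rangle$ are paired. After integrating in $x$ via rotation invariance of $\gamma^{n}\otimes\gamma^{n}$, all dimension dependence disappears: $\|\nabla P_tf\|_{2k}\le C\sqrt k\,t^{-1/2}\|f\|_{2k}$ with $C$ absolute. This is a $2k$-moment of a Gaussian-type sum, bounded by Khintchine for the vector Gaussian $y$ weighted by a scalar of bounded $L^{2k}$ norm. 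Finally remove $P_t$ by choosing $t\approx 1/d$ and using a reverse estimate $\|f\|_{2k}\le C\|P_{t}f\|_{2k}$ for $f\in\mathcal P_d$, $t\le 1/d$. This reverse estimate should follow from a dimension-free Bernstein inequality for $s\mapsto P_{s}f$ (a polynomial in $e^{-s}$ of degree $d$) applied to $\frac{d}{ds}$, combined with the contractivity of $P_s$. Together these give $\|\nabla f\|_{2k}\le C\sqrt{k}\sqrt d\|f\|_{2k}$, with bookkeeping giving $3\sqrt p$.

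For general $p\ge4$, set $k=\lfloor p/2\rfloor$, so that $2k\le p<2k+2$. We interpolate, via Stein's complex interpolation for the analytic family $z\mapsto$ (operator $f\mapsto\nabla P_{t}f$ composed with the degree-$d$ projection, or a suitable $d^{-z}$-weighted version), between the even endpoints $2k$ and $2k+2$, or between $2k$ and the Eskenazis--Ivanisvili type estimate. The interpolation is carried out along the sector of the complex OU semigroup $P_{z}$: the region where $P_z$ is $L^{q}$-contractive is the Epperson sector with angle $\arctan\frac{|q-2|}{2\sqrt{q-1}}$, and the exponent $\theta_p$ with its $\arctan$ form comes precisely from applying this sector to the relative exponent between $2k$ and $p$. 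The estimate $\theta_p\le 2/(3p)$ is then elementary calculus. The main obstacle is the reverse inequality $\|f\|\lesssim\|P_{1/d}f\|$ and, for non-even $p$, setting up the analytic family so that the even-integer case, rather than $L^2$, serves as the base point in the Eskenazis--Ivanisvili sector argument.
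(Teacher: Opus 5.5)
\textbf{There is a genuine gap.} Your reduction is logically fine, but it puts the entire difficulty into the reverse inequality $\|g\|_{L^{2k}(\gamma^n)}\le C\|P_{1/d}g\|_{L^{2k}(\gamma^n)}$ on $\mathcal P_d$, applied to the vector $\nabla f$. You do not prove this inequality, and the route you sketch for it fails.

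\emph{The smoothing step does not use evenness.} By H\"older in the Mehler formula, and since $\langle y,\theta\rangle$ is standard normal for every unit $\theta$, one has $|\nabla P_tf(x)|\le e^{-t}(1-e^{-2t})^{-1/2}\bigl(P_t|f|^p(x)\bigr)^{1/p}$ for every $p\ge2$. Combined with $P_t\nabla f=e^{t}\nabla P_tf$, a reverse inequality in $L^p$ would give $\|\nabla f\|_{L^p}\lesssim\sqrt d\,\|f\|_{L^p}$, i.e.\ Conjecture~\ref{conj} for that $p$. Your argument for the reverse inequality uses only contractivity of $P_s$ and the polynomial structure of $s\mapsto P_sf$, both available for every $p$. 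If it worked it would settle the conjecture for all $p\ge2$. So evenness has to enter somewhere, and in your sketch it does not.

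\emph{Where the sketch breaks.} Write $P_sf=F(e^{-s})$ with $F(w)=\sum_{j\le d}w^j\Pi_jf$, where $\Pi_j$ is the $j$-th chaos projection. Then $F$ is an $L^p$-valued polynomial of degree $d$ that is known to be bounded by $\|f\|_p$ only on $[0,1]$. You need it at $w=e^{1/d}$, or its derivative at the endpoint $w=1$.
\begin{itemize}
\item From boundedness on $[0,1]$ alone, Markov's inequality and Chebyshev growth give at best $\|Lf\|_p\le 2d^2\|f\|_p$, and a bounded reverse constant only for $t\lesssim d^{-2}$. That yields only $\|\nabla f\|_p\lesssim d\,\|f\|_p$.
\item The ``trigonometric'' picture would need $P_{i\tau}$, which is unbounded on $L^p$ for $p\ne2$ and small $\tau\ne0$. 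The admissible complex times form Epperson's region, which near $0$ is the sector $|\arg z|\le\frac\pi2-\arctan\frac{|p-2|}{2\sqrt{p-1}}$. Running a Bernstein argument there is exactly the mechanism behind the loss in \eqref{eq-EI-est}, not a $\sqrt d$ bound.
\item Your first-paragraph claim $\|Lf\|_{2k}\lesssim d\|f\|_{2k}$ is unsupported: hypercontractivity costs a factor exponential in the degree. Were it available, the reverse inequality would follow at once from $P_{-t}=e^{-tL}$ on $\mathcal P_d$, so your ``main obstacle'' would not exist.
\end{itemize}
The constant $3\sqrt p$ is also not derived anywhere.

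\textbf{The missing idea} is how evenness is used. Integrate by parts, $\int|\nabla f|^p\,d\gamma^n=\int f\,\delta(|\nabla f|^{p-2}\nabla f)\,d\gamma^n$. Then Cauchy--Schwarz and $\|\delta u\|_2^2\le\|u\|_2^2+\sum_{j,k}\|\partial_{x_k}u_j\|_2^2$ reduce everything to
\[
\int|\nabla f|^{p-2}\|D^2f\|_{\rm HS}^2+(p-2)^2|\nabla f|^{p-4}|D^2f\nabla f|^2\,d\gamma^n
\]
(Corollary~\ref{cor-1-1}). For $p=2k$, Lemma~\ref{lem-1-2} bounds this by a $k$-dependent multiple of $\sum_{j_1,\dots,j_k}\int|\nabla(\partial_{x_{j_1}}f\cdots\partial_{x_{j_k}}f)|^2\,d\gamma^n$. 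The trivial $L^2$ bound \eqref{eq-MB-2} applies to these products, which have degree $k(d-1)$, and gives at most $k(d-1)\|\nabla f\|_{2k}^{2k}$. The square root from Cauchy--Schwarz turns $d$ into $\sqrt d$, and H\"older finishes.

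\textbf{For non-even $p$}, your interpolation scheme runs into the obstruction in the remark after Theorem~\ref{th-1}. The even-exponent bounds hold only on $\mathcal P_d$, and the degree-$d$ projection is not uniformly bounded on $L^{2k}$ for $k\ne1$. No concrete analytic family is specified. If interpolation between sharp bounds at $2k$ and $2k+2$ went through, it would give $\theta_p=0$, which is more than the theorem claims. The actual source of $\theta_p$ is \eqref{eq-EI-est} applied at the exponent $p/k\in(2,3)$, vectorized via Khintchine (Lemma~\ref{lem-1-3}), to the same products $\partial_{x_{j_1}}f\cdots\partial_{x_{j_k}}f$, whose $\ell^2$-norm is $|\nabla f|^k$, followed by H\"older. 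Indeed, $\theta_p$ is exactly the Eskenazis--Ivanisvili exponent at $p/k$.
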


The proof is carried out in two main steps. First, an integration by parts
argument gives
\[
\|\nabla f\|_{L^p(\gamma^n)}^p
\le
C(p)\|f|\nabla f|^{p/2-1}\|_{L^2(\gamma^n)}
\Bigl(
\|\nabla f\|_{L^p(\gamma^n)}^p
+
\int_{\mathbb R^n}
|\nabla f|^{p-2}\|D^2f\|_{\rm HS}^2
\,d\gamma^n
\Bigr)^{1/2},
\]
see Corollary~\ref{cor-1-1}. The last integral should be thought of as a
multidimensional analogue of the one-dimensional expression
$|g|^{p-2}|g'|^2$. When $p=2k$ is an even integer, this expression can be
written as $k^{-2}|(g^k)'|^2$ and can therefore be controlled by the
$L^2$ Markov--Bernstein inequality \eqref{eq-MB-2}, since $g^k$ is again a
polynomial. This reduction is formalized in Lemma~\ref{lem-1-2}. Applying
H\"older's inequality, we then obtain
\[
\|\nabla f\|_{L^{2k}(\gamma^n)}^{2k}
\le
C(k)d^{1/2}
\|f\|_{L^{2k}(\gamma^n)}
\|\nabla f\|_{L^{2k}(\gamma^n)}^{2k-1},
\]
which gives the sharp bound for even integer exponents. To obtain the
improved estimate for all $p\ge4$, we use the Eskenazis--Ivanisvili estimate
\eqref{eq-EI-est} with exponent $p/k$, $k:=\lfloor p/2\rfloor$, in place of the $L^2$ estimate
\eqref{eq-MB-2}.
This approach, with some technical adjustments, is then applied to the unit
cube and to Freud-type densities.

\begin{remark}
While it is natural to try to interpolate between the sharp estimates obtained
for arbitrarily large even values of $p$, it is not clear how to carry out
such an interpolation. A first natural attempt is to consider the linear operator
\[
T_dg:=\nabla P_dg,
\]
where $P_d$ is the $L^2(\gamma^n)$-orthogonal projection onto
$\mathcal P_d(\mathbb R^n)$. This naive approach does not seem to give the
desired result. Indeed, the sharp estimate for polynomials in
$L^{2k}(\gamma^n)$ gives only
\[
\|T_dg\|_{L^{2k}(\gamma^n)}
\le
C(k)\sqrt d\,\|P_dg\|_{L^{2k}(\gamma^n)}.
\]
Thus, interpolation through this operator would require a bound for $P_d$ on
$L^{2k}(\gamma^n)$ that is uniform in $d$. Such a bound is not available for $k\ne1$ already
in dimension one, since Hermite polynomial expansions are known to converge in
$L^p(\gamma)$ only in the case $p=2$, see \cite[Section~10]{Pollard48}.
\end{remark}

\subsection{Uniform distribution on the unit cube}

Sharp dimension-free Markov--Bernstein inequalities on the unit cube
$[-1,1]^n$ in the uniform norm were obtained by
Skalyga~\cite{Skalyga96,Skalyga97}. However, the case of integral norms in
this classical multivariate setting appears to be largely unexplored.
To the best of our knowledge, even the $L^2([-1,1]^n)$ inequality for the full
gradient, with a dimension-free constant and sharp dependence on $d$, has not
been explicitly stated in the literature. We close this gap for even integer
exponents $p$ by proving the following Markov-type and weighted
Bernstein-type inequalities.

\begin{theorem}\label{th-2}
Let $k,n,d\in\mathbb N$. Then, for every
$f\in\mathcal P_d(\mathbb R^n)$, one has
\[
\Bigl(
\int_{[-1,1]^n}
\Bigl(
\sum_{j=1}^n(1-x_j^2)(\partial_{x_j}f)^2
\Bigr)^k\,dx
\Bigr)^{\frac{1}{2k}}
\le
C_1(k)d\,\|f\|_{L^{2k}([-1,1]^n)}
\]
and
\[
\|\nabla f\|_{L^{2k}([-1,1]^n)}
\le
C_2(k)d^2\,\|f\|_{L^{2k}([-1,1]^n)}.
\]
One may take $C_1(1)=\sqrt{2}$, $C_2(1)=12$, and
$C_1(k)=46k^2$, $C_2(k)=368k^3$ for $k\ge2$.
\end{theorem}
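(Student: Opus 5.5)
The plan is to follow the Gaussian scheme sketched after Theorem~\ref{th-1}, now with the Jacobi-type operator $L=\sum_j \partial_{x_j}\bigl((1-x_j^2)\partial_{x_j}\bigr)$ on $[-1,1]^n$. We use the normalized Lebesgue measure, which only changes constants. The key structural fact is that $L$ is symmetric with respect to Lebesgue measure on the cube, with no boundary terms because the weight $1-x_j^2$ vanishes at $x_j=\pm1$. Moreover, $L$ maps $\mathcal P_d$ into itself, and on tensor products of Legendre polynomials it acts diagonally with eigenvalues $-\sum_j m_j(m_j+1)$, whose absolute value is at most $d(d+1)$ when $\sum m_j\le d$.

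\textbf{Case $k=1$.} Integration by parts gives
\[
\int \Gamma(f)\,dx=-\int fLf\,dx\le d(d+1)\|f\|_2^2,
\]
where $\Gamma(f)=\sum_j(1-x_j^2)(\partial_jf)^2$. Since $d(d+1)\le 2d^2$, this yields $C_1(1)=\sqrt2$. For the Markov inequality, write $\partial_jf$ in Legendre expansion in the variable $x_j$. The one-dimensional estimate $\|g\|_{L^2[-1,1]}^2\lesssim d^2\|(1-t^2)^{1/2}g\|_{L^2}^2$ for $g$ of degree $\le d$ is a Schur/Nikolskii-type weighted inequality, obtained for instance from Gauss--Legendre quadrature or from the explicit comparison of the two Gram matrices. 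Applying it in each variable $x_j$ to $\partial_jf$ (degree $\le d-1$), Fubini gives
\[
\|\nabla f\|_2^2\lesssim d^2\int\Gamma(f)\lesssim d^4\|f\|_2^2 .
\]
Tracking constants should give something like $12$.

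\textbf{Case $k\ge2$.} Mimic Corollary~\ref{cor-1-1}. Write
\[
\int \Gamma(f)^k=\int \Gamma(f)^{k-1}\sum_j(1-x_j^2)(\partial_jf)^2
\]
and integrate by parts in each $x_j$, moving one $\partial_j$ off $f$. This produces a term $-\int f\,\Gamma(f)^{k-1}Lf$, which is not polynomially controlled, so I would rather use the form
\[
-\int f\sum_j\partial_j\bigl(\Gamma(f)^{k-1}(1-x_j^2)\partial_jf\bigr).
\]
Expanding, we get $\Gamma^{k-1}Lf$ plus $(k-1)\Gamma^{k-2}\sum_j(1-x_j^2)\partial_jf\,\partial_j\Gamma$. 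By Cauchy--Schwarz, the whole expression is bounded by
\[
\|f\Gamma^{(k-1)/2}\|_2\Bigl(\int\Gamma^{k-2}\,\Gamma(\Gamma)\Bigr)^{1/2}
\]
plus analogous terms involving a Hessian-type quantity $\Gamma_2$. The crucial reduction, as in Lemma~\ref{lem-1-2}, is that $\Gamma(f)^{k-2}\Gamma(\Gamma(f))$ is comparable to $\Gamma(\Gamma(f)^{k/2})$. Since $\Gamma(f)$ is itself a polynomial of degree $\le 2d$, its $(k/2)$-th power is problematic for odd $k$. I would therefore instead work with $h=\Gamma(f)^{\lceil k/2\rceil}$ or use $g=f^k$ directly. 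Concretely, $\Gamma(f^k)=k^2f^{2k-2}\Gamma(f)$, and applying the $k=1$ bound to the polynomial $f^k\in\mathcal P_{kd}$ gives $\int f^{2k-2}\Gamma(f)\le 2d^2\int f^{2k}$. Combined with H\"older, this handles the mixed terms.

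Collecting terms, one aims at
\[
\|\Gamma^{1/2}\|_{2k}^{2k}\le Ck^2d\,\|f\|_{2k}\,\|\Gamma^{1/2}\|_{2k}^{2k-1},
\]
which gives $C_1(k)\sim k^2$. The Markov bound then follows from the one-dimensional weighted $L^{2k}$ comparison $\|g\|_{2k}\lesssim k\,d\,\|(1-t^2)^{1/2}g\|_{2k}$, applied variable by variable. This step is not direct for $|\nabla f|^{2k}=(\sum_j(\partial_jf)^2)^k$, since it is not a sum over $j$. So for the Markov bound I would instead rerun the same integration-by-parts argument with $|\nabla f|^2$ in place of $\Gamma$, using the $L^2$ Markov bound from the case $k=1$ in the role of \eqref{eq-MB-2}.

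The main obstacle is controlling the second-order (Hessian/$\Gamma_2$) term on the cube. The commutation $[\partial_j,L]$ produces extra first-order terms, and the weights degenerate at the boundary. I would try first to bound this term by $\Gamma$ of the polynomial $\Gamma(f)$ and apply the $k=1$ inequality to it, at degree $2d$, accepting the loss in powers of $k$.
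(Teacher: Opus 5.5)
Your $k=1$ argument is correct and essentially the paper's. The Legendre eigenvalue bound $d(d+1)\le 2d^2$ gives $C_1(1)=\sqrt2$. The one-dimensional comparison $\int_{-1}^1 g^2\,dt\lesssim d^2\int_{-1}^1(1-t^2)g^2\,dt$, applied to $\partial_{x_j}f$ in $x_j$, gives the Markov bound; the paper derives it from Nikolskii's inequality, Lemma~\ref{lem-2-1}, with $8\sqrt2\le 12$.

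For $k\ge2$ there is a genuine gap, exactly at the step you call the main obstacle. After Cauchy--Schwarz, the factor $\int\Gamma(f)^{k-1}(Lf)^2\,dx$ can only be controlled without loss in $n$ by a further integration by parts (Lemma~\ref{lem-2-3}). This unavoidably produces the full weighted Hilbert--Schmidt term
\[
\int_{[-1,1]^n}\Gamma(f)^{k-1}\sum_{i,j=1}^n(1-x_i^2)(1-x_j^2)\bigl(\partial^2_{x_ix_j}f\bigr)^2\,dx ,
\]
which must be bounded by $C(k)d^2\int\Gamma(f)^k\,dx$. None of your tools reaches it:
\begin{itemize}
\item $\Gamma(\Gamma(f))$, and likewise $\Gamma(\Gamma(f)^{k/2})=\frac{k^2}{4}\Gamma(f)^{k-2}\Gamma(\Gamma(f))$, only involves the Hessian applied to the weighted gradient. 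It is dominated by $\Gamma(f)$ times the weighted Hilbert--Schmidt norm but does not dominate it. For $f=\sum_j x_j^2$ the ratio of the display above to $\int\Gamma(f)^{k-2}\Gamma(\Gamma(f))\,dx$ grows like $n$.
\item For even $k$, your $\Gamma(\Gamma(f)^{k/2})$ trick does correctly handle the $\Gamma(\Gamma(f))$ term, but that is the easier term. For odd $k$, $\Gamma(f)^{(k+1)/2}$ pairs $\Gamma(f)^{k-1}\Gamma(\Gamma(f))$ with $\Gamma(f)^{k+1}$, the wrong homogeneity.
\item The identity $\Gamma(f^k)=k^2f^{2k-2}\Gamma(f)$ contains no second derivatives, so it helps with neither term.
\end{itemize}
The missing idea is the cube analogue of Lemma~\ref{lem-1-2}. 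Write $\Gamma(f)^k=\sum_{j_1,\ldots,j_k}\prod_{\ell=1}^k(1-x_{j_\ell}^2)\,Q_{j_1,\ldots,j_k}^2$ with $Q_{j_1,\ldots,j_k}=\partial_{x_{j_1}}f\cdots\partial_{x_{j_k}}f\in\mathcal P_{k(d-1)}(\mathbb R^n)$. Then apply the $L^2$ Bernstein inequality not for Legendre polynomials but for the product Jacobi measure $\prod_j(1-x_j^2)^{m_j}\,dx$, where $m_j$ is the multiplicity of $j$ in $(j_1,\ldots,j_k)$; this is Lemma~\ref{lem-2-4}, with eigenvalues at most $2k^2d^2$. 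After summing over multi-indices, the left-hand side equals pointwise
\[
k\,\Gamma(f)^{k-1}\sum_{i,j}(1-x_i^2)(1-x_j^2)\bigl(\partial^2_{x_ix_j}f\bigr)^2+k(k-1)\Gamma(f)^{k-2}\sum_i(1-x_i^2)\Bigl(\sum_j(1-x_j^2)\partial_{x_j}f\,\partial^2_{x_ix_j}f\Bigr)^2 ,
\]
which is Lemma~\ref{lem-2-6}.

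The Markov inequality for $k\ge2$ is also not established. Rerunning the integration-by-parts scheme with $|\nabla f|^2$ in place of $\Gamma(f)$ fails. For Lebesgue measure on the cube and the unweighted gradient there is no integration-by-parts formula free of boundary terms; the symmetry of $L$ rests precisely on $1-x_j^2$ vanishing at $\pm1$. So nothing plays the role of \eqref{eq-MB-2}.

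The obstruction you noticed, that $|\nabla f|^{2k}$ is not a sum over $j$, disappears with the same multi-index expansion $|\nabla f|^{2k}=\sum_{j_1,\ldots,j_k}Q_{j_1,\ldots,j_k}^2$. Iterating the one-dimensional $L^2$ comparison $m_j$ times in each variable $x_j$ gives $\|\nabla f\|_{L^{2k}}\le 8kd\,\|\Gamma(f)^{1/2}\|_{L^{2k}}$ (Lemma~\ref{lem-2-2}); this works because $(1-x_j^2)^rQ^2$ remains a nonnegative polynomial.

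In the paper this comparison also absorbs the term $-2x_i(\partial_{x_i}f)^2$ in $\partial_{x_i}\Gamma(f)$ inside Lemma~\ref{lem-2-6}. Combining these pieces is what yields the explicit constants $46k^2$ and $368k^3$, which your proposal does not derive.
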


The dependence on $d$ is sharp already in dimension one.

\subsection{Unimodal densities}

Next, we transfer the cube estimate to products of probability measures with
bounded unimodal densities.

\begin{definition}
We say that a density $\varrho$ on $\mathbb R$ is unimodal if there exists
$x_0\in\mathbb R$ such that $\varrho$ is nondecreasing on $(-\infty,x_0]$ and
nonincreasing on $[x_0,\infty)$.
\end{definition}

In particular, all superlevel sets $\{\varrho\ge t\}$ are intervals. We use
this fact to represent a unimodal density as a mixture of uniform distributions
on intervals. 
This reduces the Markov--Bernstein inequalities for products of unimodal
densities to the case of products of intervals and gives the following
extension of the cube result.

\begin{theorem}\label{th-4}
Let $k,n,d\in\mathbb N$ and let
$\mu=\mu_1\otimes\cdots\otimes\mu_n$, where each $\mu_j$ has a bounded
unimodal density $\varrho_j$ on $\mathbb R$. Set
\[
M:=\max_{1\le j\le n}\|\varrho_j\|_\infty.
\]
Then, for every $f\in\mathcal P_d(\mathbb R^n)$, one has
\[
\|\nabla f\|_{L^{2k}(\mu)}
\le
C(k)Md^2
\|f\|_{L^{2k}(\mu)}.
\]
One may take $C(1)=24$ and $C(k)=736k^3$ for $k\ge2$.
\end{theorem}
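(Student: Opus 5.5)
The plan is to reduce Theorem~\ref{th-4} to the cube estimate of Theorem~\ref{th-2}, transported to arbitrary boxes by an affine change of variables, via the layer-cake representation of unimodal densities. For each $j$, write
\[
\varrho_j(t)=\int_0^{\infty}1_{\{\varrho_j\ge s\}}(t)\,ds
=\int_0^{M_j}|I_j(s)|\,\frac{1_{I_j(s)}(t)}{|I_j(s)|}\,ds ,
\]
where $M_j=\|\varrho_j\|_\infty$ and $I_j(s)=\{\varrho_j\ge s\}$ is an interval by unimodality. This exhibits $\mu_j$ as a mixture of normalized uniform measures $\lambda_{I}$ on intervals $I=I_j(s)$, with mixing measure $\nu_j(ds)=|I_j(s)|\,ds$ on $(0,M_j)$, which is a probability measure because $\int\varrho_j=1$. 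Taking products, $\mu=\int \lambda_{I_1(s_1)}\otimes\cdots\otimes\lambda_{I_n(s_n)}\,d\nu_1(s_1)\cdots d\nu_n(s_n)$. Since both sides of the inequality raised to the power $2k$ are linear in the measure, it will suffice to prove, for every box $Q=\prod_j [a_j,b_j]$ with uniform probability measure $\lambda_Q$,
\[
\int|\nabla f|^{2k}\,d\lambda_Q\le \Bigl(C\,d^2\max_j\tfrac{1}{b_j-a_j}\Bigr)^{2k}\int f^{2k}\,d\lambda_Q ,
\]
and then to integrate over $s$.

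The trouble is that short intervals, where $s$ is near $M_j$, have large $1/|I_j(s)|$, so a pointwise bound $1/|I|\le M$ is not available. I will therefore not use $\max_j$ but keep the coordinates weighted. Rescaling each coordinate of $Q$ to $[-1,1]$ turns $\partial_{x_j}$ into $\tfrac{2}{b_j-a_j}\partial_{y_j}$, so I will need a coordinatewise-weighted form of Theorem~\ref{th-2}:
\[
\int\Bigl(\sum_j w_j^2(\partial_jf)^2\Bigr)^k\,dx\le (C d^2)^{2k}\max_j w_j^{2k}\int f^{2k}\,dx .
\]
This is immediate from the unweighted statement. The genuinely dangerous terms are those with tiny intervals, and these must be handled in the mixing rather than on the cube.

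To handle them, I will redo the mixture asymmetrically: cut at level $s\le M/2$ versus $s>M/2$, or more cleanly, reparametrize. An alternative that avoids the issue entirely is to note that the uniform measure $\lambda_I$ with $|I|\ge 1/(2M)$ is fine, and that the mass $\int_{\{s:|I_j(s)|<1/(2M)\}}|I_j(s)|\,ds\le M\cdot\tfrac{1}{2M}=\tfrac12$. Hence $\mu_j=\tfrac12\tilde\mu_j+\tfrac12\eta_j$, where $\tilde\mu_j$ is a mixture of uniforms on intervals of length at least $1/(2M)$ (after renormalization, the mass of the long intervals being at least $1/2$). The remaining piece $\eta_j$ is again dominated by $\mu_j$ and can be absorbed into $\tilde\mu_j$, since short intervals are nested inside longer ones: $\lambda_{I}$ for a short $I=I_j(s)$ is dominated by a multiple of $\lambda_{J}$ for $J\supset I$ of length $1/(2M)$ obtained by enlarging, which is still an interval. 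The cleanest route is to replace each short level interval $I_j(s)$ by an interval $J_j(s)\supseteq I_j(s)$ of length exactly $1/(2M)$, and to compare $\int g\,d\mu_j$ with mixtures of $\lambda_{J}$ up to the factor $2$ on the left side only, applied with $g=|\nabla f|^{2k}$ and $g=f^{2k}$ in opposite directions. The main obstacle, and where I expect the factor $2$ loss (giving $C(k)=2C_2(k)$, consistent with $24=2\cdot12$ and $736=2\cdot368$), is making this domination argument precise in the direction needed for both sides at once.
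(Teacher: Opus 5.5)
Your starting point matches the paper's: the layer-cake decomposition of each $\varrho_j$ into uniform measures on the nested intervals $\{\varrho_j\ge s\}$, and the transfer to boxes by an affine change of variables. You also correctly identify the obstacle, namely that $1/|I_j(s)|$ is unbounded on short level intervals. However, there is a genuine gap: the proposal contains no mechanism that overcomes this obstacle, and the domination step you leave open is false.

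The weighted inequality you write has $w_j=2/(b_j-a_j)$ and $\max_j w_j^{2k}$ on the right. That reintroduces exactly the bad quantity $\max_j 1/|I_j(s_j)|$.

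Your fallback cannot work either. That fallback enlarges short level intervals to length $1/(2M)$ and compares $\mu_j$ with the modified mixture $\tilde\mu_j$ by domination, once for $|\nabla f|^{2k}$ and once for $f^{2k}$. For a counterexample, take $\varrho_j=M$ on an interval $T$ of length $\varepsilon=\beta/M$ with $\beta$ small, and $\varrho_j=h$ on the rest of a long interval $\Lambda\supset T$.
\begin{itemize}
\item The only short level set is $T$, carrying mass $(M-h)\varepsilon\approx\beta$.
\item After enlarging $T$ to $J\supset T$ with $|J|=1/(2M)$, the density of $\tilde\mu_j$ on $J$ is $h+2\beta(M-h)$.
\item On $T$ this is far below $M$ once $\beta$ and $h/M$ are small, so $\mu_j\le 2\tilde\mu_j$ fails.
\item On $J\setminus T$ it is far above $h$ as $h/M\to0$, so $\tilde\mu_j\le C\mu_j$ fails.
\end{itemize}
More generally, an interval of length at least $1/(2M)$ that meets $T$ puts at least $\frac{1/2-\beta}{\beta}$ times its $T$-mass into the $1/(2M)$-neighbourhood of $T$. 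There $\mu_j$ has mass at most $h/M$. Hence no mixture of uniform measures on such intervals is two-sided comparable to $\mu_j$ with constants independent of $h/M$. Any strategy that first modifies the mixture and then estimates box by box is therefore doomed. The factor $2$ in $24=2\cdot12$ and $736=2\cdot368$ does not come from such a loss; it comes from the side length $2$ of $[-1,1]^n$ in the rescaling.

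The missing idea is to put the side lengths on the gradient side, where they cost nothing, and to recover $1/M$ only after averaging over levels at a fixed point. Applying Theorem~\ref{th-2} to $f\circ L$, with $L$ the affine map of $[-1,1]^n$ onto $Q=\prod_j[a_j,b_j]$, gives the scale-invariant bound
\[
\int_Q\Bigl(\sum_{j=1}^n(b_j-a_j)^2(\partial_{x_j}f)^2\Bigr)^k\,dx\le\bigl(2C_2(k)d^2\bigr)^{2k}\int_Q f^{2k}\,dx ,
\]
with no maximum over the sides. Integrate this over all levels $t\in[0,\infty)^n$, with $Q=E_1(t_1)\times\cdots\times E_n(t_n)$ and unnormalized Lebesgue measure. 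The right side becomes exactly $(2C_2(k)d^2)^{2k}\|f\|_{L^{2k}(\mu)}^{2k}$.

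On the left, exchange the integrals, fix $x$, and apply Jensen's inequality for $s\mapsto s^k$ over $t\in\prod_j[0,\varrho_j(x_j)]$. This reduces everything to the one-dimensional bound
\[
\frac1\tau\int_0^\tau L_j(t)^2\,dt\ge M_j^{-2},\qquad 0<\tau\le M_j .
\]
This bound follows from Cauchy--Schwarz: $L_j$ is nonincreasing with $\int_0^{M_j}L_j=1$, so its running averages are at least $1/M_j$. This is the paper's Lemma~\ref{lem-3-2}.

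In this way short intervals are never estimated on their own. At a point of a thin spike, the gradient is controlled through the long level boxes that also contain that point.
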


The dependence on $d$ is sharp already for the cube.

Since log-concave densities are unimodal and an isotropic log-concave density
on $\mathbb R$ is bounded by $1$, see \cite[Lemma 5.5(a)]{LV07}, we obtain the
following corollary.

\begin{corollary}\label{MB-log-conc}
Let $k,n,d\in\mathbb N$ and let
$\mu=\mu_1\otimes\cdots\otimes\mu_n$, where $\mu_1,\ldots,\mu_n$ are isotropic
log-concave probability measures on $\mathbb R$. Then, for every
$f\in\mathcal P_d(\mathbb R^n)$, one has
\[
\|\nabla f\|_{L^{2k}(\mu)}
\le
C(k)d^2
\|f\|_{L^{2k}(\mu)}.
\]
One may take $C(1)=24$ and $C(k)=736k^3$ for $k\ge2$.
\end{corollary}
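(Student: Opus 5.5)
The corollary should follow directly from Theorem~\ref{th-4}. The only work is to check that its hypotheses hold and that $M\le 1$. After that, the constants $C(k)$ carry over unchanged.

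First, each $\mu_j$ has a density $\varrho_j$ that is unimodal. A log-concave probability measure on $\mathbb R$ that is isotropic (mean zero, variance one) is non-degenerate, so it is absolutely continuous with density $\varrho_j=e^{-V_j}$, where $V_j$ is convex with values in $(-\infty,+\infty]$. Let $x_0$ be a minimizer of $V_j$. Such a point exists because $V_j\to+\infty$ at $\pm\infty$, which is forced by integrability of $e^{-V_j}$. A convex function is nonincreasing to the left of a minimizer and nondecreasing to the right. Hence $\varrho_j$ is nondecreasing on $(-\infty,x_0]$ and nonincreasing on $[x_0,\infty)$, which is exactly unimodality in the sense of the Definition above. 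Where $V_j=+\infty$ the density vanishes, and this does not affect monotonicity, since the support of $\varrho_j$ is an interval.

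Second, the densities are bounded by one. By \cite[Lemma 5.5(a)]{LV07}, an isotropic log-concave density on $\mathbb R$ satisfies $\|\varrho_j\|_\infty\le 1$. Therefore $M=\max_j\|\varrho_j\|_\infty\le 1$.

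Applying Theorem~\ref{th-4} with $M\le1$ gives, for every $f\in\mathcal P_d(\mathbb R^n)$,
\[
\|\nabla f\|_{L^{2k}(\mu)}\le C(k)Md^2\|f\|_{L^{2k}(\mu)}\le C(k)d^2\|f\|_{L^{2k}(\mu)},
\]
with $C(1)=24$ and $C(k)=736k^3$ for $k\ge2$.

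There is no real obstacle here. The only point that needs care is that isotropy excludes degenerate (Dirac) log-concave measures, so a density exists. This holds because the variance equals $1\neq0$, and a log-concave measure on $\mathbb R$ with nonzero variance is not concentrated on a point. Its support is therefore a nondegenerate interval, on which it has a log-concave density.
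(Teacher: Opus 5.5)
Your proposal is correct and is exactly the paper's argument: log-concave densities are unimodal and, by \cite[Lemma 5.5(a)]{LV07}, isotropic ones are bounded by $1$, so Theorem~\ref{th-4} applies with $M\le 1$ and the same constants. One small nit: a minimizer of $V_j$ is guaranteed only after you choose the lower semicontinuous version of $V_j$ (equivalently, the upper semicontinuous version of $\varrho_j$); this is harmless, since changing the density on a null set does not change $\mu_j$.
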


\subsection{Freud-type densities}

The dependence on $d$ in Theorem~\ref{th-4} and
Corollary~\ref{MB-log-conc} is sharp for the whole classes of bounded unimodal
densities and isotropic log-concave densities, respectively. For individual
densities, however, this dependence may be improved, as illustrated by the
Gaussian case.

In the last part of the paper, we extend the Gaussian estimates for even
integer exponents $p$ to products of probability measures with Freud-type densities,
namely
\[
\nu_m(dt)=c_m e^{-|t|^{2m}}\,dt,
\]
where $m\in\mathbb N$ and $c_m$ is the normalizing constant. In dimension one,
Markov--Bernstein inequalities for general Freud weights were obtained by
Levin and Lubinsky~\cite{LeLu94}. In the particular case of the measures
$\nu_m$, for arbitrary $p\in[1,\infty)$, this estimate asserts, see
\cite[Estimate~(7.3)]{LuSurvey07}, that
\begin{equation}\label{Freud-MB}
\|f'\|_{L^p(\nu_m)}
\le
C(m,p)d^{1-\frac1{2m}}\|f\|_{L^p(\nu_m)}
\quad \forall f\in \mathcal P_d(\mathbb R).
\end{equation}
For even integer $p$, we extend this result to the multidimensional product
setting.

\begin{theorem}\label{th-6}
Let $m,k\in\mathbb N$. There exists a constant $C(m,k)>0$ such that, for
every $n,d\in\mathbb N$ and every $f\in\mathcal P_d(\mathbb R^n)$, one has
\[
\|\nabla f\|_{L^{2k}(\nu_m^n)}
\le
C(m,k)d^{1-\frac1{2m}}
\|f\|_{L^{2k}(\nu_m^n)}.
\]
\end{theorem}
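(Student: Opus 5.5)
We follow the Gaussian scheme outlined after Theorem~\ref{th-1}, with $\gamma$ replaced by $\nu_m$ and the generator adapted. Write $\mu=\nu_m^n$ with density proportional to $e^{-V}$, where $V(x)=\sum_j|x_j|^{2m}$. The associated operator is $Lg=\Delta g-\langle\nabla V,\nabla g\rangle$, which satisfies $\int g\,(-L h)\,d\mu=\int\langle\nabla g,\nabla h\rangle\,d\mu$. For a polynomial $f$ we integrate by parts:
\[
\int|\nabla f|^{p}\,d\mu=\int\langle\nabla f,|\nabla f|^{p-2}\nabla f\rangle\,d\mu=-\int f\,\operatorname{div}_\mu\bigl(|\nabla f|^{p-2}\nabla f\bigr)\,d\mu ,
\]
where $\operatorname{div}_\mu F=\operatorname{div}F-\langle\nabla V,F\rangle$. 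Expanding, we get $|\nabla f|^{p-2}Lf$ plus a term bounded by $(p-2)|\nabla f|^{p-2}\|D^2f\|_{\rm HS}$. By Cauchy--Schwarz,
\[
\|\nabla f\|_{p}^p\le C(p)\,\|f|\nabla f|^{p/2-1}\|_{2}\Bigl(\int|\nabla f|^{p-2}(Lf)^2\,d\mu+\int|\nabla f|^{p-2}\|D^2f\|^2_{\rm HS}\,d\mu\Bigr)^{1/2}.
\]
Unlike the Gaussian case, $Lf$ is not of lower order relative to $\nabla f$: the drift $x_j^{2m-1}\partial_jf$ has degree up to $d+2m-2$. For that reason we keep $Lf$ explicitly rather than try to absorb it.

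Set $p=2k$ and $g=\partial_jf$, summed appropriately. The key identity is $|g|^{2k-2}|\nabla g|^2=k^{-2}|\nabla(g^k)|^2$, and more generally $|\nabla f|^{2k-2}\|D^2f\|^2_{\rm HS}$ is controlled by $\sum$ of $|\nabla(\text{polynomial of degree}\le k(d-1))|^2$. To see this, expand $|\nabla f|^{2k-2}=(\sum_i(\partial_if)^2)^{k-1}$ multinomially, i.e. write it as sums of squares of monomials in the $\partial_if$, in the spirit of Lemma~\ref{lem-1-2}. We then need an $L^2(\nu_m^n)$ Markov inequality with dimension-free constant:
\[
\|\nabla h\|_{L^2(\nu_m^n)}\le C(m)\,D^{1-\frac1{2m}}\|h\|_{L^2(\nu_m^n)},\qquad h\in\mathcal P_D .
\]
This tensorizes. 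Expand $h$ in the product basis of orthonormal polynomials $\{q_{a_1}(x_1)\cdots q_{a_n}(x_n)\}$ for $\nu_m$. The operator $\partial_j$ acts only on the $j$-th factor and lowers the degree, so $\|\partial_jh\|^2\le\sum_a\Lambda_{a_j}|c_a|^2$-type bounds do not follow immediately, since $\partial_j$ is not diagonal. Instead we use the one-dimensional inequality~\eqref{Freud-MB} for $p=2$ slice by slice: $\|\partial_jh\|_2^2\le C\,(\deg_{x_j}h)^{2-1/m}\|h\|_2^2$, which is not summable in $j$. The correct route is therefore through the Dirichlet form. Define the self-adjoint operator $A=\sum_j A_j$, where $A_j=\partial_j^*\partial_j$ acts in the $j$-th variable. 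In one dimension the restriction of $\partial^*\partial$ to the polynomials of degree $\le D$ has norm $\lesssim D^{2-1/m}$ by~\eqref{Freud-MB}. To obtain a dimension-free bound we need more: an estimate $\langle A_1 h,h\rangle\le C\sum_a a^{1-1/m}|c_a|^2$ in one variable, i.e. a weighted, Bernstein-type bound in terms of the degree of each coefficient. Summing over coordinates then gives $\sum_a(\sum_j a_j^{1-1/m})|c_a|^2\le D^{1-1/m}\cdot D^{1/m}\cdots$ by concavity, which is too large. Replacing the degree-wise bound by the sharper form with $|a|^{\,\cdot}$ is the delicate point. I would try to prove the one-variable bound $\|\partial(q_a)\text{-component structure}\|$ via the Freud three-term recurrence, where the recurrence coefficients satisfy $a_n\asymp n^{1/(2m)}$ and the derivative is expressible through them, and then recombine across coordinates.

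We then apply Hölder to the cross terms to reach
\[
\|\nabla f\|_{2k}^{2k}\le C\,d^{1-\frac1{2m}}\|f\|_{2k}\|\nabla f\|_{2k}^{2k-1}.
\]
The term $\int|\nabla f|^{2k-2}(Lf)^2\,d\mu$ must also be bounded by $Cd^{2-1/m}\|\nabla f\|_{2k}^{2k}$. We handle it with the same sum-of-squares trick applied to $(Lf)^2$, and we use $\|Lh\|_2\lesssim D^{1-1/(2m)}\|\nabla h\|_2$, a reverse-type estimate that again follows from the spectral structure. \textbf{The main obstacle} is exactly this dimension-free $L^2$ input for $\nu_m^n$, both the gradient bound and the bound for $L$. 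Everything else is a mechanical transfer of Corollary~\ref{cor-1-1} and Lemma~\ref{lem-1-2}.
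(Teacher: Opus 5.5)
The decisive gap is your treatment of $\int|\nabla f|^{2k-2}(Lf)^2\,d\nu_m^n$. Expanding $|\nabla f|^{2k-2}$ as a sum of squares gives $\sum_J (Q_J\,Lf)^2$ with $Q_J=\partial_{x_{j_1}}f\cdots\partial_{x_{j_{k-1}}}f$, $J=(j_1,\ldots,j_{k-1})$. But $Q_J\,Lf$ is not $L$ of any polynomial, so a bound $\|Lh\|_2\lesssim D^{1-\frac1{2m}}\|\nabla h\|_2$ has nothing to act on. That bound is true, but not for spectral reasons: for $m\ge2$ the operator $L$ raises degrees by $2m-2$ and is not diagonal in the orthonormal product basis. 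Its proof needs the same two ingredients you are missing here.

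The first missing ingredient is the second integration by parts \eqref{eq-freud-ibp-2}. Do not isolate $Lf$; keep the whole divergence $\delta_m u$ of a vector field. One choice is $u=|\nabla f|^{k-1}\nabla f$, cut off near $\{\nabla f=0\}$. Another is $u=Q_J\nabla f$, using $Q_J\,Lf=-\delta_m(Q_J\nabla f)-\langle\nabla Q_J,\nabla f\rangle$. For such $u$,
\[
\int(\delta_m u)^2\,d\nu_m^n\le 2m(2m-1)\int\sum_{j} x_j^{2m-2}u_j^2\,d\nu_m^n+\int\sum_{j,l}(\partial_{x_l}u_j)^2\,d\nu_m^n .
\]
The derivative term is handled by Lemma~\ref{lem-1-2} and the $L^2$ inequality. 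The potential term is
\[
\sum_J\sum_j\int x_j^{2m-2}(Q_J\partial_{x_j}f)^2\,d\nu_m^n .
\]
It needs the second missing ingredient, a separate weighted estimate $\int x_j^{2m-2}g^2\,d\nu_m^n\le C(m)D^{1-\frac1m}\|g\|_{L^2(\nu_m^n)}^2$ for $g\in\mathcal P_D(\mathbb R^n)$. This is Lemma~\ref{lem-4-2}, proved in one variable from the formula for $\frac{d}{dt}\bigl(tg^2e^{-|t|^{2m}}\bigr)$, H\"older, and \eqref{Freud-MB}. This is exactly where the high-degree drift $x_j^{2m-1}\partial_{x_j}f$ you point out gets paid for, and your plan has no substitute for it.

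The $L^2$ input is also left unproved, and your reason for thinking its tensorization is delicate rests on a wrong exponent. The one-variable coefficient bound you need is $\|g'\|_{L^2(\nu_m)}^2\le C(m)\sum_r r^{2-\frac1m}a_r^2$ for $g=\sum_r a_ru_r$, not $r^{1-\frac1m}$; squaring \eqref{Freud-MB} gives $D^{2-\frac1m}$. Since $2-\frac1m\ge1$, the map $t\mapsto t^{2-\frac1m}$ is superadditive. Hence $\sum_j r_j^{2-\frac1m}\le|\mathbf r|^{2-\frac1m}\le d^{2-\frac1m}$, and tensorization is immediate with no concavity loss.

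The coefficient bound itself comes from a banded structure. Integrating by parts, $\int u_r'u_j\,d\nu_m=2m\int t^{2m-1}u_ru_j\,d\nu_m$ for $j<r$, and this vanishes unless $j<r<j+2m$. So each coefficient of $g'$ involves at most $2m-1$ of the $a_r$. Cauchy--Schwarz together with \eqref{Freud-MB} applied to each $u_r$ then gives the claim; this is Theorem~\ref{th-5}. Your recurrence-coefficient idea could produce the same banded picture, but as written it is only a suggestion.
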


\subsection{Notation}

As mentioned above, $\mathcal P_d(\mathbb R^n)$ denotes the space of all
algebraic polynomials on $\mathbb R^n$ of degree at most $d$. We denote by
$C^\infty(\mathbb R^n)$ the space of all smooth functions on $\mathbb R^n$,
and by $C^\infty(\mathbb R^n,\mathbb R^n)$ the space of all smooth vector
fields on $\mathbb R^n$.
We denote by $C^\infty_P(\mathbb R^n)$ the space of all smooth functions on
$\mathbb R^n$ such that the function itself and all its partial derivatives
have at most polynomial growth. Similarly,
$C^\infty_P(\mathbb R^n,\mathbb R^n)$ denotes the space of all vector fields
whose components belong to $C^\infty_P(\mathbb R^n)$.

Throughout the paper, constants are denoted by $C$ and may change from line to
line. Their dependence on parameters is always indicated explicitly, for
instance by writing $C(p)$, $C(k)$, or $C(m,k)$. When several constants appear
in the same argument, we distinguish them by subscripts, writing
$C_1,C_2$, etc. These subscripts are only labels and do not indicate any
additional dependence.

\section{The Gaussian case}	

\subsection{Gaussian integration by parts}

For $f\in C^\infty_P(\mathbb R^n)$, set
\[
Lf:=\Delta f-\langle x,\nabla f\rangle .
\]
For $j\in\{1,\ldots,n\}$, define
\[
\delta_{x_j}f:=x_jf-\partial_{x_j}f .
\]
For a vector field $u\in C^\infty_P(\mathbb R^n,\mathbb R^n)$, define
\[
\delta u:=\sum_{j=1}^n \delta_{x_j}u_j
=\langle x,u\rangle-\operatorname{div}u .
\]
In particular,
\[
\delta\nabla f=-Lf.
\]

For $f,g\in C^\infty_P(\mathbb R^n)$, the Gaussian integration by parts
formula gives
\[
\int_{\mathbb R^n}\partial_{x_j}f \, g\,d\gamma^n
=
\int_{\mathbb R^n} f\,\delta_{x_j}g\,d\gamma^n .
\]
Consequently, for $f\in C^\infty_P(\mathbb R^n)$ and
$u\in C^\infty_P(\mathbb R^n,\mathbb R^n)$,
\begin{equation}\label{int-by-parts-1}
\int_{\mathbb R^n}\langle \nabla f,u\rangle\,d\gamma^n
=
\int_{\mathbb R^n} f\,\delta u\,d\gamma^n .
\end{equation}

We will also use the following consequence. Since
\[
\partial_{x_j}(\delta u)
=
u_j+\sum_{k=1}^n\delta_{x_k}\partial_{x_j}u_k ,
\]
we obtain
\begin{align}\label{int-by-parts-2}
\int_{\mathbb R^n}(\delta u)^2\,d\gamma^n
&=
\int_{\mathbb R^n}
\sum_{j=1}^n u_j
\Bigl(
u_j+\sum_{k=1}^n\delta_{x_k}\partial_{x_j}u_k
\Bigr)\,d\gamma^n
\\&=
\int_{\mathbb R^n}|u|^2\,d\gamma^n
+
\int_{\mathbb R^n}
\sum_{j,k=1}^n
\partial_{x_k}u_j\,\partial_{x_j}u_k\,d\gamma^n
\notag
\\
&\le
\int_{\mathbb R^n}|u|^2\,d\gamma^n
+
\int_{\mathbb R^n}
\sum_{j,k=1}^n
(\partial_{x_k}u_j)^2\,d\gamma^n .
\notag
\end{align}

\subsection{Key lemmas}

\begin{lemma}\label{lem-1-1}
Let $n,d\in\mathbb N$, let $r\ge q>0$, and let
$f\in\mathcal P_d(\mathbb R^n)$. Then
\begin{align*}
\|\nabla f\|_{L^{r+2}(\gamma^n)}^{r+2}
&\le
2\|f|\nabla f|^{r-q}\|_{L^2(\gamma^n)}
\Bigl(
\int_{\mathbb R^n}
|\nabla f|^{2q+2}
+
|\nabla f|^{2q}\|D^2f\|_{\rm HS}^2
\\
&\qquad\qquad\qquad\qquad\qquad\qquad\qquad\quad+
r^2|\nabla f|^{2q-2}|D^2f\nabla f|^2\,d\gamma^n
\Bigr)^{1/2}.
\end{align*}
\end{lemma}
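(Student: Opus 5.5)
Write $|\nabla f|^{r+2}=\langle \nabla f, u\rangle$ with the vector field $u:=|\nabla f|^{r}\nabla f$. Then apply the Gaussian integration by parts formula \eqref{int-by-parts-1} to get
\[
\|\nabla f\|_{L^{r+2}(\gamma^n)}^{r+2}=\int f\,\delta u\,d\gamma^n .
\]
The field $u$ lies in $C^\infty_P$ away from the zero set of $\nabla f$. For $r>0$ it is only $C^1$-type near that set, so I would regularize by replacing $|\nabla f|$ by $(|\nabla f|^2+\varepsilon)^{1/2}$ and let $\varepsilon\to0$ at the end by dominated convergence. Every integrand has polynomial growth, so the limit passage is routine.

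To produce the factor $\|f|\nabla f|^{r-q}\|_{L^2}$, I split the weight: $u=|\nabla f|^{r-q}\,v$ with $v:=|\nabla f|^{q}\nabla f$. The product rule for $\delta$ gives $\delta(hv)=h\,\delta v-\langle\nabla h,v\rangle$ with $h=|\nabla f|^{r-q}$. Hence
\[
\int f\,\delta u\,d\gamma^n=\int f|\nabla f|^{r-q}\,\delta v\,d\gamma^n-\int f\langle \nabla |\nabla f|^{r-q}, v\rangle d\gamma^n .
\]
Here $\nabla|\nabla f|^{r-q}=(r-q)|\nabla f|^{r-q-2}D^2f\nabla f$. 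The second term is therefore bounded by $(r-q)\int |f||\nabla f|^{r-q}\,|\nabla f|^{q-1}|D^2f\nabla f|\,d\gamma^n$.

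Apply Cauchy--Schwarz to both terms, pulling out $\|f|\nabla f|^{r-q}\|_{L^2}$ each time. This gives
\[
\|\nabla f\|_{L^{r+2}}^{r+2}\le \|f|\nabla f|^{r-q}\|_{L^2}\Bigl(\|\delta v\|_{L^2}+(r-q)\bigl\||\nabla f|^{q-1}D^2f\nabla f\bigr\|_{L^2}\Bigr).
\]
For $\|\delta v\|_{L^2}^2$ I would use \eqref{int-by-parts-2}. It is bounded by $\int |v|^2+\sum_{j,k}(\partial_k v_j)^2$, where $|v|^2=|\nabla f|^{2q+2}$. Moreover,
\[
\partial_k v_j=|\nabla f|^q\partial_{jk}f+q|\nabla f|^{q-2}(D^2f\nabla f)_k\,\partial_j f .
\]
The squared Frobenius norm of $Dv$ is therefore at most $2|\nabla f|^{2q}\|D^2f\|_{\rm HS}^2+2q^2|\nabla f|^{2q-2}|D^2f\nabla f|^2$. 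Alternatively, expanding exactly, the cross term equals $2q|\nabla f|^{2q-2}|D^2f\nabla f|^2$, so the total is $|\nabla f|^{2q}\|D^2f\|^2_{\rm HS}+(q^2+2q)|\nabla f|^{2q-2}|D^2f\nabla f|^2$.

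Finally, combine the two terms via $a+b\le\sqrt2\,(a^2+b^2)^{1/2}$ to reach the displayed bound with constant $2$ and coefficient $r^2$ on the last term. The main obstacle is the bookkeeping of constants. The coefficient of $|\nabla f|^{2q-2}|D^2f\nabla f|^2$ after squaring is roughly $2\bigl(q^2+2q+(r-q)^2\bigr)$ under the outer square root, and this must be at most $4r^2$. Since $q\le r$, we have $q^2+2q+(r-q)^2\le r^2+2q$, and this fits within $2r^2$ once $r\ge1$. For small $r$ I would instead use the cruder split $2|\nabla f|^{2q}\|D^2f\|^2+2q^2(\cdots)$, adjusting which inequality is used so that the constant $2$ and the coefficient $r^2$ come out. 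If this proves tight, it is the step to recheck carefully, possibly handling the middle term by an exact computation rather than Cauchy--Schwarz.
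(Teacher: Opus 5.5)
This is essentially the paper's argument. The paper also writes $u_r=|\nabla f|^{r-q}u_q$ and uses the product rule for $\delta$. It pulls out $\|f|\nabla f|^{r-q}\|_{L^2(\gamma^n)}$ by Cauchy--Schwarz, bounds $\|\delta u_q\|_{L^2(\gamma^n)}^2$ via \eqref{int-by-parts-2}, and uses your crude split $\sum_{j,k}(\partial_k v_j)^2\le 2|\nabla f|^{2q}\|D^2f\|_{\rm HS}^2+2q^2|\nabla f|^{2q-2}|D^2f\nabla f|^2$. The only real difference is the regularization: the paper uses a cutoff $\eta_\varepsilon(|\nabla f|)$, while you use $(|\nabla f|^2+\varepsilon)^{1/2}$. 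Yours works equally well and avoids the $\eta_\varepsilon'$ terms, with domination for $\varepsilon\le1$ by $(|\nabla f|^2+1)^q\|D^2f\|_{\rm HS}^2$.

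The constant bookkeeping needs no case split. With the crude split you need $2q^2+(r-q)^2\le 2r^2$, i.e.\ $(3q+r)(q-r)\le 0$, which holds for all $0<q\le r$; this is exactly the paper's $4q^2+2(r-q)^2\le 4r^2$. By contrast, your claim that the exact expansion fits once $r\ge1$ is false: $r=q=1$ gives $3>2$. So use the crude split throughout.
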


\begin{proof}
Let $\eta\in C^\infty(\mathbb R)$ be a function such that
\[
\eta(t)=0 \quad \text{for } |t|\le 1,
\quad
\eta(t)=1 \quad \text{for } |t|\ge 2,
\quad
0\le \eta(t)\le 1 \quad \text{for all } t\in\mathbb R.
\]
For $\varepsilon>0$, set
$\eta_\varepsilon(t):=\eta(t/\varepsilon)$.

For $a>0$, consider the vector field
\[
u_{a,\varepsilon}
:=
|\nabla f|^a\eta_\varepsilon(|\nabla f|)\nabla f,
\]
where the expression is understood as zero on the set
$\{|\nabla f|\le\varepsilon\}$. 
With this convention, $u_{a,\varepsilon}\in C^\infty_P(\mathbb R^n,\mathbb R^n)$.

We have
\[
\delta u_{a,\varepsilon}
=
-|\nabla f|^a\eta_\varepsilon(|\nabla f|)Lf
-
\bigl(
a|\nabla f|^{a-2}\eta_\varepsilon(|\nabla f|)
+
|\nabla f|^{a-1}\eta_\varepsilon'(|\nabla f|)
\bigr)
\langle D^2f\nabla f,\nabla f\rangle .
\]
Therefore,
\begin{equation}\label{eq-delta-u}
\delta u_{r,\varepsilon}
=
|\nabla f|^{r-q}
\bigl(
\delta u_{q,\varepsilon}
-
(r-q)|\nabla f|^{q-2}\eta_\varepsilon(|\nabla f|)
\langle D^2f\nabla f,\nabla f\rangle
\bigr).
\end{equation}

By Gaussian integration by parts \eqref{int-by-parts-1},
\[
\int_{\mathbb R^n}
|\nabla f|^{r+2}I_{\{|\nabla f|\ge 2\varepsilon\}}\,d\gamma^n
\le 
\int_{\mathbb R^n}
|\nabla f|^{r+2}\eta_\varepsilon(|\nabla f|)\,d\gamma^n
=
\int_{\mathbb R^n}
\langle \nabla f,u_{r,\varepsilon}\rangle\,d\gamma^n
=
\int_{\mathbb R^n} f\,\delta u_{r,\varepsilon}\,d\gamma^n .
\]
Using the identity \eqref{eq-delta-u} and applying the
Cauchy--Schwarz inequality, we obtain
\begin{align*}
&\int_{\mathbb R^n}
|\nabla f|^{r+2}I_{\{|\nabla f|\ge 2\varepsilon\}}\,d\gamma^n
\\
&\le
\Bigl(
\int_{\mathbb R^n}
|f|^2|\nabla f|^{2r-2q}\,d\gamma^n
\Bigr)^{1/2}
\Bigl(
\int_{\mathbb R^n}
\bigl(
\delta u_{q,\varepsilon}
-
(r-q)|\nabla f|^{q-2}\eta_\varepsilon(|\nabla f|)
\langle D^2f\nabla f,\nabla f\rangle
\bigr)^2\,d\gamma^n
\Bigr)^{1/2}.
\end{align*}

We estimate the second factor. Since
\[
\langle D^2f\nabla f,\nabla f\rangle^2
\le
|\nabla f|^2|D^2f\nabla f|^2,
\]
we get
\begin{align*}
&\int_{\mathbb R^n}
\Bigl(
\delta u_{q,\varepsilon}
-
(r-q)|\nabla f|^{q-2}\eta_\varepsilon(|\nabla f|)
\langle D^2f\nabla f,\nabla f\rangle
\Bigr)^2\,d\gamma^n
\\
&\le
2\int_{\mathbb R^n}(\delta u_{q,\varepsilon})^2\,d\gamma^n
+
2(r-q)^2
\int_{\mathbb R^n}
|\nabla f|^{2q-2}\eta_\varepsilon^2(|\nabla f|)
|D^2f\nabla f|^2\,d\gamma^n .
\end{align*}
By \eqref{int-by-parts-2}, applied to $u_{q,\varepsilon}$,
\[
\int_{\mathbb R^n}(\delta u_{q,\varepsilon})^2\,d\gamma^n
\le
\int_{\mathbb R^n}|u_{q,\varepsilon}|^2\,d\gamma^n
+
\int_{\mathbb R^n}
\sum_{j,k=1}^n
\bigl(\partial_{x_k}(u_{q,\varepsilon})_j\bigr)^2\,d\gamma^n .
\]
We have
\[
|u_{q,\varepsilon}|^2
=
|\nabla f|^{2q+2}\eta_\varepsilon^2(|\nabla f|)
\le
|\nabla f|^{2q+2}
\]
and
\[
\partial_{x_k}(u_{q,\varepsilon})_j
=
|\nabla f|^q\eta_\varepsilon(|\nabla f|)
\partial_{x_kx_j}^2f
+
\bigl(
q|\nabla f|^{q-2}\eta_\varepsilon(|\nabla f|)
+
|\nabla f|^{q-1}\eta_\varepsilon'(|\nabla f|)
\bigr)
\langle \nabla\partial_{x_k}f,\nabla f\rangle
\partial_{x_j}f .
\]
Therefore,
\[
\sum_{j,k=1}^n
\bigl(\partial_{x_k}(u_{q,\varepsilon})_j\bigr)^2
\le
2|\nabla f|^{2q}\|D^2f\|_{\rm HS}^2
+
2\bigl(
q|\nabla f|^{q-1}
+
|\nabla f|^q|\eta_\varepsilon'(|\nabla f|)|
\bigr)^2
|D^2f\nabla f|^2 .
\]
Since
$\eta_\varepsilon'(t)=\varepsilon^{-1}\eta'(t/\varepsilon)$
and $\eta_\varepsilon'$ is supported on
$\{\varepsilon\le |t|\le 2\varepsilon\}$, we have
\[
|\nabla f|^q|\eta_\varepsilon'(|\nabla f|)|
\le
\frac{|\nabla f|^q}{\varepsilon}\|\eta'\|_\infty
I_{\{\varepsilon\le|\nabla f|\le 2\varepsilon\}}
\le
2\|\eta'\|_\infty
|\nabla f|^{q-1}
I_{\{\varepsilon\le|\nabla f|\le 2\varepsilon\}}.
\]

Combining the preceding estimates, we obtain
\begin{align*}
\int_{\mathbb R^n}
|\nabla f|^{r+2}&I_{\{|\nabla f|\ge 2\varepsilon\}}\,d\gamma^n
\\
&\le
\|f|\nabla f|^{r-q}\|_{L^2(\gamma^n)}
\Biggl(
2\int_{\mathbb R^n}
|\nabla f|^{2q+2}\,d\gamma^n
+
4\int_{\mathbb R^n}
|\nabla f|^{2q}\|D^2f\|_{\rm HS}^2\,d\gamma^n
\\
&+
\int_{\mathbb R^n}
\Bigl(
4\bigl(q+2\|\eta'\|_\infty I_{\{\varepsilon\le|\nabla f|\le 2\varepsilon\}}\bigr)^2
+
2(r-q)^2
\Bigr)
|\nabla f|^{2q-2}|D^2f\nabla f|^2\,d\gamma^n
\Biggr)^{1/2}.
\end{align*}
Letting $\varepsilon\to0$ and using Lebesgue's dominated convergence theorem, since
\[
4q^2+2(r-q)^2\le 4r^2
\]
for $r\ge q>0$, we obtain the announced estimate.
\end{proof}

\begin{corollary}\label{cor-1-1}
Let $n,d\in\mathbb N$, let $p>2$, and let
$f\in\mathcal P_d(\mathbb R^n)$. Then
\begin{align*}
\|\nabla f\|_{L^p(\gamma^n)}^p
&\le
2\|f|\nabla f|^{p/2-1}\|_{L^2(\gamma^n)}
\Bigl(
\int_{\mathbb R^n}
|\nabla f|^p
+
|\nabla f|^{p-2}\|D^2f\|_{\rm HS}^2
\\
&\qquad\qquad\qquad\qquad\qquad\qquad\qquad\quad
+
(p-2)^2|\nabla f|^{p-4}|D^2f\nabla f|^2\,d\gamma^n
\Bigr)^{1/2}.
\end{align*}
\end{corollary}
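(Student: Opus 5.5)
This follows from Lemma~\ref{lem-1-1} by a suitable choice of the parameters $r$ and $q$. I take
\[
r:=p-2,\qquad q:=\frac{p}{2}-1 .
\]
These are admissible, since $p>2$ gives $q>0$, and $r-q=\frac p2-1\ge0$ gives $r\ge q$.

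With this choice every exponent in Lemma~\ref{lem-1-1} becomes the one in the corollary:
\begin{itemize}
\item $r+2=p$, so the left-hand side is $\|\nabla f\|_{L^p(\gamma^n)}^p$;
\item $r-q=\frac p2-1$, so the first factor on the right is $2\|f|\nabla f|^{p/2-1}\|_{L^2(\gamma^n)}$;
\item $2q+2=p$, $2q=p-2$ and $2q-2=p-4$, which match the three powers of $|\nabla f|$ in the integrand;
\item $r^2=(p-2)^2$, which is the coefficient in front of $|\nabla f|^{p-4}|D^2f\nabla f|^2$.
\end{itemize}
Substituting these values into the estimate of Lemma~\ref{lem-1-1} gives the corollary verbatim.

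The only point to check is that the integrals are meaningful when $2<p<4$, since then $|\nabla f|^{p-4}$ has a negative exponent. This causes no difficulty. By Cauchy--Schwarz, $|D^2f\nabla f|^2\le \|D^2f\|_{\rm HS}^2|\nabla f|^2$, so
\[
|\nabla f|^{p-4}|D^2f\nabla f|^2\le |\nabla f|^{p-2}\|D^2f\|_{\rm HS}^2 ,
\]
and the right-hand side is locally bounded. The integrand is therefore dominated by a polynomially growing function and is Gaussian integrable, with the usual convention that it equals zero on $\{\nabla f=0\}$. This is the same convention under which the limit $\varepsilon\to0$ was taken in the proof of Lemma~\ref{lem-1-1}, so the lemma applies to this case as stated.
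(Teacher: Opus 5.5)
Your proposal is correct and is exactly the paper's argument: the corollary follows by substituting $r=p-2$ and $q=p/2-1$ into Lemma~\ref{lem-1-1}. Your extra check that $|\nabla f|^{p-4}|D^2f\nabla f|^2\le|\nabla f|^{p-2}\|D^2f\|_{\rm HS}^2$ is also correct. It is a harmless addition that the paper does not spell out.
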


\begin{proof}
The estimate follows by taking
$r=p-2$ and $q=p/2-1$ in Lemma~\ref{lem-1-1}.
\end{proof}

\begin{lemma}\label{lem-1-2}
Let $k\in\mathbb{N}$.
For every $f\in C^\infty(\mathbb{R}^n)$ we have
\[
\sum_{1\le j_1,\ldots, j_k\le n }
|\nabla (\partial_{x_{j_1}}f\cdot\ldots\cdot \partial_{x_{j_k}}f)|^2
= k|\nabla f|^{2k-2}\|D^2 f\|_{\rm HS}^2
+ k(k-1)|\nabla f|^{2k-4}|D^2f \nabla f|^2.
\]
\end{lemma}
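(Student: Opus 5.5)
This is a direct computation: expand the gradient of a product by the Leibniz rule, square it, and sum over all multi-indices. Write $g_j:=\partial_{x_j}f$ and $h_{jl}:=\partial^2_{x_jx_l}f$. For fixed $j_1,\ldots,j_k$ and fixed $l\in\{1,\ldots,n\}$, the Leibniz rule gives
\[
\partial_{x_l}\bigl(g_{j_1}\cdots g_{j_k}\bigr)
=\sum_{s=1}^k h_{j_s l}\prod_{t\ne s} g_{j_t}.
\]
Squaring this yields a double sum over pairs $(s,s')$. I will split it into the diagonal terms $s=s'$ and the off-diagonal terms $s\ne s'$, then sum over $l$ and over all $j_1,\ldots,j_k$.

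For a diagonal term with fixed $s$, the summand is $h_{j_s l}^2\prod_{t\ne s}g_{j_t}^2$. Summing freely over the indices $j_t$ with $t\ne s$ factorizes, and each such sum gives $\sum_j g_j^2=|\nabla f|^2$. The remaining sum over $j_s$ and $l$ gives $\|D^2f\|_{\rm HS}^2$. So each $s$ contributes $|\nabla f|^{2k-2}\|D^2f\|_{\rm HS}^2$, and the $k$ choices of $s$ give the first term of the statement.

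For an off-diagonal term with an ordered pair $s\ne s'$, the summand is
\[
h_{j_s l}\,g_{j_s}\;h_{j_{s'} l}\,g_{j_{s'}}\prod_{t\ne s,s'}g_{j_t}^2.
\]
The free indices again contribute $|\nabla f|^{2k-4}$. Summing over $j_s$ gives $\sum_j h_{jl}g_j=(D^2f\nabla f)_l$, using the symmetry of the Hessian, and the same holds for $j_{s'}$. Summing over $l$ then gives $|D^2f\nabla f|^2$. There are $k(k-1)$ ordered pairs, which gives the second term.

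The only points needing care are bookkeeping ones. I must count ordered pairs so that the cross terms produce exactly $k(k-1)$. I must use $h_{jl}=h_{lj}$ so that the contraction is $D^2f\nabla f$ rather than its transpose. For $k=1$ the off-diagonal sum is empty, so the term $|\nabla f|^{2k-4}$ never actually appears. For $k\ge2$ the exponent $2k-4$ is nonnegative, so no singularity arises where $\nabla f=0$.
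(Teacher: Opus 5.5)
Your proposal is correct and follows essentially the same route as the paper: Leibniz rule for each partial derivative, expanding the square into diagonal ($s=s'$) and off-diagonal ordered pairs ($s\ne s'$), factorizing the free index sums into powers of $|\nabla f|^2$, and contracting the remaining indices to obtain $\|D^2f\|_{\rm HS}^2$ and $|D^2f\nabla f|^2$. The paper fixes the differentiation index first and sums over it at the end, which is only a difference in the order of summation.
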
	

\begin{proof}
Let
\[
u_j:=\partial_{x_j}f,\quad j=1,\ldots,n.
\]
Fix $i\in\{1,\ldots,n\}$. For a multi-index
$(j_1,\ldots,j_k)\in\{1, \ldots, n\}^k$,
we set
\[
Q_{j_1,\ldots,j_k}:=u_{j_1}\cdot\ldots\cdot u_{j_k}.
\]
Then
\[
\partial_{x_i}Q_{j_1,\ldots,j_k}
=
\sum_{\ell=1}^k
\partial_{x_i}u_{j_\ell}
\prod_{\substack{r=1\\ r\ne \ell}}^k u_{j_r}.
\]
Therefore
\[
\sum_{1\le j_1,\ldots,j_k\le n}
\bigl(\partial_{x_i}Q_{j_1,\ldots,j_k}\bigr)^2
=
\sum_{1\le j_1,\ldots,j_k\le n}
\sum_{\ell,m=1}^k
\partial_{x_i}u_{j_\ell}\partial_{x_i}u_{j_m}
\prod_{\substack{r=1\\ r\ne \ell}}^k u_{j_r}
\prod_{\substack{s=1\\ s\ne m}}^k u_{j_s}.
\]
We split the last sum into the diagonal part $\ell=m$ and the off-diagonal
part $\ell\ne m$.

For the diagonal part, for each fixed $\ell$ we get
\[
\sum_{1\le j_1,\ldots,j_k\le n}
(\partial_{x_i}u_{j_\ell})^2
\prod_{\substack{r=1\\ r\ne \ell}}^k u_{j_r}^2
=
\Bigl(\sum_{j=1}^n(\partial_{x_i}u_j)^2\Bigr)
\Bigl(\sum_{j=1}^n u_j^2\Bigr)^{k-1}.
\]
Since there are $k$ choices of $\ell$, the diagonal contribution is
\[
k|\nabla f|^{2k-2}|\nabla\partial_{x_i}f|^2.
\]
	
For the off-diagonal part, for each pair $\ell\ne m$ we get
\[
\sum_{1\le j_1,\ldots,j_k\le n}
\partial_{x_i}u_{j_\ell}\partial_{x_i}u_{j_m}
u_{j_\ell}u_{j_m}
\prod_{\substack{r=1\\ r\ne \ell,m}}^k u_{j_r}^2
=
\Bigl(\sum_{j=1}^n \partial_{x_i}u_j u_j\Bigr)^2
\Bigl(\sum_{j=1}^n u_j^2\Bigr)^{k-2}.
\]
There are $k(k-1)$ pairs $\ell\ne m$. Hence the off-diagonal
contribution is
\[
k(k-1)|\nabla f|^{2k-4}
\langle \nabla\partial_{x_i}f,\nabla f\rangle^2.
\]

Thus, for every $i=1,\ldots,n$,
\[
\sum_{1\le j_1,\ldots,j_k\le n}
\bigl(\partial_{x_i}Q_{j_1,\ldots,j_k}\bigr)^2
=
k|\nabla f|^{2k-2}|\nabla\partial_{x_i}f|^2
+
k(k-1)|\nabla f|^{2k-4}
\langle \nabla\partial_{x_i}f,\nabla f\rangle^2.
\]
Summing this identity over $i=1,\ldots,n$, we obtain
\[
\sum_{1\le j_1,\ldots,j_k\le n}
|\nabla Q_{j_1,\ldots,j_k}|^2
=
k|\nabla f|^{2k-2}
\|D^2f\|_{\rm HS}^2
+
k(k-1)|\nabla f|^{2k-4}
|D^2f\nabla f|^2.
\]
This proves the lemma.
\end{proof}
	
\begin{lemma}\label{lem-1-3}
Let $p\ge 2$ and let $n, d\in\mathbb N$. Assume that there exists a
constant $M_{p,d}$ such that
\[
\|\nabla f\|_{L^p(\gamma^n)}
\le
M_{p,d}\|f\|_{L^p(\gamma^n)}
\]
for every $f\in \mathcal P_d(\mathbb R^n)$. Then, for every $m\in\mathbb{N}$ and for every
$g=(g_1,\ldots,g_m)$ with $g_j\in\mathcal P_d(\mathbb R^n)$, one has
\[
\Bigl\|
\Bigl(\sum_{j=1}^m |\nabla g_j|^2\Bigr)^{1/2}
\Bigr\|_{L^p(\gamma^n)}
\le
\sqrt{p}\cdot M_{p,d}
\Bigl\|
\Bigl(\sum_{j=1}^m |g_j|^2\Bigr)^{1/2}
\Bigr\|_{L^p(\gamma^n)}.
\]
\end{lemma}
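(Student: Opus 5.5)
Since the measure $\gamma^n$ is rotation invariant, we would not try an abstract Marcinkiewicz--Zygmund argument. Instead we embed the vector-valued problem into a scalar problem in more variables, using Gaussian averaging. The $\sqrt p$ in the statement suggests exactly this: Gaussian moments satisfy $\|\xi\|_{L^p(\gamma)}\asymp\sqrt p$, and this factor is lost when comparing $L^p$ norms of linear Gaussian forms with $L^2$ norms.

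\textbf{Step 1 (lifting).} Introduce independent standard Gaussian variables $y=(y_1,\ldots,y_m)$ and set
\[
F(x,y):=\sum_{j=1}^m y_jg_j(x).
\]
In general $F\in\mathcal P_{d+1}(\mathbb R^{n+m})$. To stay in degree $d$, we would treat $y$ as a parameter instead: for each fixed $y$, the function $F(\cdot,y)$ belongs to $\mathcal P_d(\mathbb R^n)$. This is the key trick, since the hypothesis is stated only for polynomials of degree at most $d$ but holds for every $n$ (here only for the fixed $n$, which is all we need). Apply the hypothesis for fixed $y$, raise to the power $p$, and integrate in $y$:
\[
\int\!\!\int\Bigl|\sum_j y_j\nabla g_j(x)\Bigr|^p\,d\gamma^n(x)\,d\gamma^m(y)\le M_{p,d}^p\int\!\!\int\Bigl|\sum_j y_jg_j(x)\Bigr|^p\,d\gamma^n\,d\gamma^m.
\]

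\textbf{Step 2 (right-hand side).} For fixed $x$, the variable $\sum_j y_jg_j(x)$ is Gaussian with variance $\sum_j g_j(x)^2$. Its $p$-th moment is therefore $\gamma_p^p\bigl(\sum_j g_j^2\bigr)^{p/2}$, where $\gamma_p:=\|\xi\|_{L^p(\gamma)}$.

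\textbf{Step 3 (left-hand side, the main point).} For fixed $x$, the vector $\sum_j y_j\nabla g_j(x)\in\mathbb R^n$ is a centered Gaussian vector $Ay$, where $A$ is the $n\times m$ matrix with columns $\nabla g_j(x)$. We need a lower bound
\[
\mathbb E|Ay|^p\ge c^p\,\|A\|_{\rm HS}^p,\qquad\text{with }\ \frac{\gamma_p}{c}\le\sqrt p .
\]
Jensen's inequality gives $\mathbb E|Ay|^p\ge(\mathbb E|Ay|^2)^{p/2}=\|A\|_{\rm HS}^p$, because $p\ge2$. So we may take $c=1$. Combining the three steps yields the claim with constant $\gamma_p M_{p,d}$.

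\textbf{Step 4 (the constant).} It remains to check $\gamma_p\le\sqrt p$. We have
\[
\gamma_p^p=\frac{2^{p/2}\Gamma\bigl(\tfrac{p+1}{2}\bigr)}{\sqrt\pi}.
\]
For $2\le p\le 3$, log-convexity of $p\mapsto\log\mathbb E|\xi|^p$ interpolates between $\gamma_2=1$ and $\gamma_4=3^{1/4}$, and both lie below $\sqrt p$. For larger $p$, the bound $\Gamma\bigl(\tfrac{p+1}{2}\bigr)\le\bigl(\tfrac{p+1}{2}\bigr)^{p/2}$ gives $\gamma_p\le\sqrt{p+1}\,\pi^{-1/(2p)}$, and this is at most $\sqrt p$ once we note $\pi^{-1/p}\le p/(p+1)$ for $p\ge 2$, since $(1+1/p)^p<e<\pi$.

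The step I expect needs the most care is this elementary moment estimate, which must hold uniformly for $p\ge2$. The rest is Fubini, together with the observation that $F(\cdot,y)$ has degree at most $d$ in $x$.
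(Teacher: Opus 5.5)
Your proof is correct and is essentially the paper's argument: randomize $g$ into a scalar polynomial in $\mathcal P_d(\mathbb R^n)$, apply the hypothesis for each fixed realization, use Jensen ($p\ge2$) on the gradient side, and use a moment bound on the other side. The only difference is that you use Gaussian coefficients instead of Rademacher signs, so the right-hand moment is exactly $\gamma_p^p\bigl(\sum_j g_j^2\bigr)^{p/2}$ and you verify $\gamma_p\le\sqrt p$ by hand, whereas the paper cites the Kahane--Khintchine inequality with constant $\sqrt p$.
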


\begin{proof}
Let $\varepsilon_1,\ldots,\varepsilon_m$ be independent Rademacher random
variables, that is,
\[
\mathbb P(\varepsilon_i=1)
=
\mathbb P(\varepsilon_i=-1)
=
\frac12 .
\]
For each choice of signs $\varepsilon=(\varepsilon_1,\ldots,\varepsilon_m)$,
define
\[
f_\varepsilon(x):=\sum_{j=1}^m\varepsilon_j g_j(x).
\]
Then $f_\varepsilon\in\mathcal P_d(\mathbb R^n)$, and therefore, by the
assumed scalar estimate,
\[
\|\nabla f_\varepsilon\|_{L^p(\gamma^n)}^p
\le
M_{p,d}^p\|f_\varepsilon\|_{L^p(\gamma^n)}^p.
\]
Averaging over the signs and applying H\"older's inequality, we obtain
\begin{align*}
&\int_{\mathbb R^n}
\Bigl(\sum_{j=1}^m|\nabla g_j|^2
\Bigr)^{p/2}\,d\gamma^n
=
\int_{\mathbb R^n}
\Bigl(\mathbb E_\varepsilon\Bigl|
\sum_{j=1}^m\varepsilon_j\nabla g_j
\Bigr|^2\Bigr)^{p/2} \, d\gamma^n	
=
\int_{\mathbb R^n}
\mathbb E_\varepsilon
\Bigl|
\sum_{j=1}^m\varepsilon_j\nabla g_j
\Bigr|^p\,d\gamma^n
\\
&\le
\mathbb E_\varepsilon
\int_{\mathbb R^n}
\Bigl|
\sum_{j=1}^m\varepsilon_j\nabla g_j
\Bigr|^p\,d\gamma^n
\le
M_{p,d}^p
\mathbb E_\varepsilon
\int_{\mathbb R^n}
\Bigl|
\sum_{j=1}^m\varepsilon_j g_j
\Bigr|^p\,d\gamma^n
=
M_{p,d}^p
\int_{\mathbb R^n}
\mathbb E_\varepsilon\Bigl|
\sum_{j=1}^m\varepsilon_j g_j
\Bigr|^p\,d\gamma^n.
\end{align*}
By Kahane--Khintchine inequality for Rademacher sums 
(see, for instance,
\cite[Ch.~4]{LedouxTalagrand} and~\cite{HaagerupKhintchine} for the sharp constant), we have
\[
\mathbb E_\varepsilon
\Bigl|
\sum_{j=1}^m\varepsilon_j g_j
\Bigr|^p
\le
p^{p/2}
\Bigl(\sum_{j=1}^m|g_j|^2\Bigr)^{p/2}.
\]
Therefore,
\[
\int_{\mathbb R^n}
\Bigl(\sum_{j=1}^m|\nabla g_j|^2
\Bigr)^{p/2}\,d\gamma^n
\le
p^{p/2}\cdot M_{p,d}^p
\int_{\mathbb R^n}
\Bigl(\sum_{j=1}^m|g_j|^2\Bigr)^{p/2}\,d\gamma^n.
\]
Taking the $p$-th root yields
the announced estimate.
\end{proof}

\subsection{Proof of Theorem~\ref{th-1}}
Let
$k:=\lfloor p/2\rfloor$.

First suppose that $p$ is an even integer. Then $p=2k$. Since
\[
\partial_{x_{j_1}}f\cdot\ldots\cdot \partial_{x_{j_k}}f
\in \mathcal P_{k(d-1)}(\mathbb R^n)
\]
for any choice of $j_1,\ldots,j_k\in\{1,\ldots,n\}$, Lemma~\ref{lem-1-2}
and \eqref{eq-MB-2} give
\begin{align*}
&\int_{\mathbb R^n}
k|\nabla f|^{2k-2}\|D^2 f\|_{\rm HS}^2
+
k(k-1)|\nabla f|^{2k-4}|D^2f\nabla f|^2\,d\gamma^n
\\
&=
\sum_{1\le j_1,\ldots,j_k\le n}
\int_{\mathbb R^n}
\bigl|
\nabla(
\partial_{x_{j_1}}f\cdot\ldots\cdot \partial_{x_{j_k}}f)
\bigr|^2\,d\gamma^n
\\
&\le
k(d-1)
\sum_{1\le j_1,\ldots,j_k\le n}
\int_{\mathbb R^n}
|\partial_{x_{j_1}}f\cdot\ldots\cdot \partial_{x_{j_k}}f|^2\,d\gamma^n
\\
&=
k(d-1)\int_{\mathbb R^n}|\nabla f|^{2k}\,d\gamma^n.
\end{align*}
Therefore,
\begin{equation}\label{eq-HS-est}
\int_{\mathbb R^n}
|\nabla f|^{2k-2}\|D^2 f\|_{\rm HS}^2
+
(k-1)|\nabla f|^{2k-4}|D^2f\nabla f|^2\,d\gamma^n
\le
(d-1)\int_{\mathbb R^n}|\nabla f|^{2k}\,d\gamma^n.
\end{equation}
Applying Corollary~\ref{cor-1-1}, we obtain
\begin{align*}
\|\nabla f\|_{L^p(\gamma^n)}^p
&\le
2\|f|\nabla f|^{p/2-1}\|_{L^2(\gamma^n)}
\Bigl(
\int_{\mathbb R^n}
|\nabla f|^p
+
|\nabla f|^{2k-2}\|D^2f\|_{\rm HS}^2
\\
&\qquad\qquad\qquad\qquad\qquad\qquad\qquad\quad+
4(k-1)^2|\nabla f|^{2k-4}|D^2f\nabla f|^2
\,d\gamma^n
\Bigr)^{1/2}.
\end{align*}
By \eqref{eq-HS-est},
\[
\int_{\mathbb R^n}
|\nabla f|^{2k-2}\|D^2 f\|_{\rm HS}^2
+
4(k-1)^2|\nabla f|^{2k-4}|D^2f\nabla f|^2\,d\gamma^n
\le
4(k-1)(d-1)\int_{\mathbb R^n}|\nabla f|^{2k}\,d\gamma^n.
\]
Since
\[
2\sqrt{1+2(p-2)(d-1)}
\le
2\sqrt{2(p-2)d}
\le
3\sqrt{pd},
\]
it follows that
\[
\|\nabla f\|_{L^p(\gamma^n)}^p
\le
3\sqrt{pd}\,
\|f|\nabla f|^{p/2-1}\|_{L^2(\gamma^n)}
\|\nabla f\|_{L^p(\gamma^n)}^{p/2}.
\]
By H\"older's inequality,
\begin{equation}\label{eq-holder}
\|f|\nabla f|^{p/2-1}\|_{L^2(\gamma^n)}
\le
\|f\|_{L^p(\gamma^n)}
\|\nabla f\|_{L^p(\gamma^n)}^{p/2-1}.
\end{equation}
Hence
\[
\|\nabla f\|_{L^p(\gamma^n)}^p
\le
3\sqrt{pd}\,
\|f\|_{L^p(\gamma^n)}
\|\nabla f\|_{L^p(\gamma^n)}^{p-1}.
\]
If $\|\nabla f\|_{L^p(\gamma^n)}=0$, the conclusion is immediate. Otherwise,
dividing by $\|\nabla f\|_{L^p(\gamma^n)}^{p-1}$, we obtain the assertion for even integers $p$.

\medskip

It remains to consider the case where $p>4$ is not an even integer. Then
\[
2k<p<2k+2,
\quad
p/k\in(2,3).
\]
Applying Lemma~\ref{lem-1-3} to the estimate \eqref{eq-EI-est} with the
exponent $p/k$, we get
\[
\Bigl\|
\Bigl(\sum_{j=1}^m |\nabla g_j|^2\Bigr)^{1/2}
\Bigr\|_{L^{p/k}(\gamma^n)}
\le
C_1(p)d^{\frac12+\theta_p}
\Bigl\|
\Bigl(\sum_{j=1}^m |g_j|^2\Bigr)^{1/2}
\Bigr\|_{L^{p/k}(\gamma^n)}
\]
for every vector $g=(g_1,\ldots,g_m)$ whose components belong to
$\mathcal P_{k(d-1)}(\mathbb R^n)$. Here we used that
\[
\frac1\pi\arctan\Bigl(
\frac{p/k-2}{2\sqrt{p/k-1}}
\Bigr)
=
\frac1\pi\arctan\Bigl(
\frac{p-2k}{2\sqrt{k(p-k)}}
\Bigr)
=
\theta_p.
\]
We apply this estimate to the vector whose components are
\[
\partial_{x_{j_1}}f\cdot\ldots\cdot \partial_{x_{j_k}}f,
\quad
1\le j_1,\ldots,j_k\le n.
\]
Using Lemma~\ref{lem-1-2}, we obtain
\begin{align*}
&\Bigl\|
k|\nabla f|^{2k-2}\|D^2 f\|_{\rm HS}^2
+
k(k-1)|\nabla f|^{2k-4}|D^2f\nabla f|^2
\Bigr\|_{L^{p/(2k)}(\gamma^n)}^{1/2}
\\
&=
\Bigl\|
\Bigl(
\sum_{1\le j_1,\ldots,j_k\le n}
\bigl|
\nabla(
\partial_{x_{j_1}}f\cdot\ldots\cdot \partial_{x_{j_k}}f)
\bigr|^2
\Bigr)^{1/2}
\Bigr\|_{L^{p/k}(\gamma^n)}
\\
&\le
C_1(p)d^{\frac12+\theta_p}
\Bigl\|
\Bigl(
\sum_{1\le j_1,\ldots,j_k\le n}
|\partial_{x_{j_1}}f\cdot\ldots\cdot \partial_{x_{j_k}}f|^2
\Bigr)^{1/2}
\Bigr\|_{L^{p/k}(\gamma^n)}
\\
&=
C_1(p)d^{\frac12+\theta_p}
\||\nabla f|^k\|_{L^{p/k}(\gamma^n)}
=
C_1(p)d^{\frac12+\theta_p}
\|\nabla f\|_{L^p(\gamma^n)}^k.
\end{align*}
Hence
\[
\Bigl\|
k|\nabla f|^{2k-2}\|D^2 f\|_{\rm HS}^2
+
k(k-1)|\nabla f|^{2k-4}|D^2f\nabla f|^2
\Bigr\|_{L^{p/(2k)}(\gamma^n)}
\le
C_2(p) d^{1+2\theta_p}
\|\nabla f\|_{L^p(\gamma^n)}^{2k}.
\]
Since $k\ge2$,
\begin{align*}
&|\nabla f|^{p-2}\|D^2f\|_{\rm HS}^2
+
(p-2)^2|\nabla f|^{p-4}|D^2f\nabla f|^2
\\
&\le
C_3(p)|\nabla f|^{p-2k}
\Bigl(
k|\nabla f|^{2k-2}\|D^2 f\|_{\rm HS}^2
+
k(k-1)|\nabla f|^{2k-4}|D^2f\nabla f|^2
\Bigr).
\end{align*}
Therefore, by H\"older's inequality,
\begin{align*}
&\int_{\mathbb R^n}
|\nabla f|^{p-2}\|D^2f\|_{\rm HS}^2
+
(p-2)^2|\nabla f|^{p-4}|D^2f\nabla f|^2
\,d\gamma^n
\\
&\le
C_3(p)
\bigl\||\nabla f|^{p-2k}\bigr\|_{L^{p/(p-2k)}(\gamma^n)}
\Bigl\|
k|\nabla f|^{2k-2}\|D^2 f\|_{\rm HS}^2
+
k(k-1)|\nabla f|^{2k-4}|D^2f\nabla f|^2
\Bigr\|_{L^{p/(2k)}(\gamma^n)}
\\
&\le
C_4(p)d^{1+2\theta_p}
\|\nabla f\|_{L^p(\gamma^n)}^{p-2k}
\|\nabla f\|_{L^p(\gamma^n)}^{2k}
=
C_4(p)d^{1+2\theta_p}
\|\nabla f\|_{L^p(\gamma^n)}^p.
\end{align*}
Substituting this estimate into Corollary~\ref{cor-1-1}, gives
\[
\|\nabla f\|_{L^p(\gamma^n)}^p
\le
C_5(p)d^{\frac12+\theta_p}
\|f|\nabla f|^{p/2-1}\|_{L^2(\gamma^n)}
\|\nabla f\|_{L^p(\gamma^n)}^{p/2}.
\]
By \eqref{eq-holder},
\[
\|\nabla f\|_{L^p(\gamma^n)}^p
\le
C_5(p)d^{\frac12+\theta_p}
\|f\|_{L^p(\gamma^n)}
\|\nabla f\|_{L^p(\gamma^n)}^{p-1}.
\]
If $\|\nabla f\|_{L^p(\gamma^n)}=0$, the conclusion is immediate. Otherwise,
dividing by $\|\nabla f\|_{L^p(\gamma^n)}^{p-1}$ proves the estimate for
non-even $p>4$. 

\medskip
Finally, we estimate the error term $\theta_p$
for $p\ge 4$.
We write
\[
p=2k+\alpha,\quad 0\le \alpha<2, \quad k:=\lfloor p/2\rfloor.
\]
For $p\ge4$ one has $k\ge2$, and
\[
\theta_p
=
\frac1\pi
\arctan\Bigl(
\frac{\alpha}{2\sqrt{k(k+\alpha)}}
\Bigr).
\]
Our goal is to prove that $p\theta_p\le 2/3$. For fixed $k$, the function
\[
\alpha\mapsto
(2k+\alpha)
\arctan\Bigl(
\frac{\alpha}{2\sqrt{k(k+\alpha)}}
\Bigr)
=
(2k+\alpha)
\arctan\Bigl(
\frac{1}{2\sqrt{k/\alpha(k/\alpha+1)}}
\Bigr)
\]
is increasing on $[0,2]$. 
Therefore,
\[
p\theta_p
\le
\frac{2(k+1)}{\pi}
\arctan\Bigl(
\frac{1}{\sqrt{k(k+2)}}
\Bigr)
=\frac{2(k+1)}{\pi}
\arcsin\frac{1}{k+1}.
\]
The function
\[
x\mapsto x\arcsin\frac1x
\]
is decreasing on $(1,\infty)$, since
\[
\frac{d}{dx}\biggl(x\arcsin\frac1x\biggr)
=
\arcsin\frac1x-\frac{1}{\sqrt{x^2-1}}<0.
\]
Thus, as $k+1\ge3>1$,
\[
(k+1)\arcsin\frac{1}{k+1}
\le
3\arcsin\frac13.
\]
Finally,
\[
3\arcsin\frac13<\frac{\pi}{3}.
\]
Indeed, if $a=\arcsin(1/3)$, then
\[
\sin(3a)=3\sin a-4\sin^3a
=
1-\frac{4}{27}
=
\frac{23}{27}
<
\frac{\sqrt3}{2}
=
\sin\frac{\pi}{3},
\]
and $0<3a<\pi/2$. Therefore
\[
p\theta_p
\le
\frac{2}{\pi}\cdot \frac{\pi}{3}
=
\frac23,
\]
which gives the claimed estimate.
\qed

\section{The uniform distribution on the cube}

\subsection{Integration by parts on the unit cube}

For two vector fields $u,v\in C^\infty(\mathbb R^n,\mathbb R^n)$, let
\[
\langle u,v\rangle_0
:=
\sum_{j=1}^n(1-x_j^2)u_jv_j
\]
and let
\[
|u|_0^2:=\langle u,u\rangle_0.
\]
For $r>0$, we set, on $[-1,1]^n$,
\[
|u|_0^r:=\bigl(|u|_0^2\bigr)^{r/2}.
\]

For $f\in C^\infty(\mathbb R^n)$, set
\[
\delta_{0,x_j}f
:=
2x_jf-(1-x_j^2)\partial_{x_j}f.
\]
For a vector field $u=(u_1,\ldots,u_n)\in C^\infty(\mathbb R^n,\mathbb R^n)$, set
\[
\delta_0u:=\sum_{j=1}^n\delta_{0,x_j}u_j.
\]

For $f\in C^\infty(\mathbb R^n)$ and
$u\in C^\infty(\mathbb R^n,\mathbb R^n)$, we have
\begin{equation}\label{jacobi-int-by-parts-1}
\int_{[-1,1]^n}
(1-x_j^2)u_j\partial_{x_j}f\,dx
=
\int_{[-1,1]^n}
f\delta_{0,x_j}u_j\,dx.
\end{equation}
Therefore,
\begin{equation}\label{jacobi-int-by-parts}
\int_{[-1,1]^n}\langle \nabla f,u\rangle_0\,dx
=
\int_{[-1,1]^n}f\,\delta_0u\,dx.
\end{equation}

Let
\[
L_0 f
:=
\sum_{j=1}^n
\Bigl(
(1-x_j^2)\partial_{x_jx_j}^2f
-
2x_j\partial_{x_j}f
\Bigr).
\]
In particular,
\[
\delta_0\nabla f=-L_0f.
\]

\subsection{Preliminary lemmas}

\begin{lemma}\label{lem-2-1}
Let $D\in \mathbb N$ and let $Q\in \mathcal P_D(\mathbb R)$ be such that $Q(t)\ge 0$ $\forall t\in[-1, 1]$. Then
\[
\int_{-1}^1 Q(t)\,dt
\le
16D^2
\int_{-1}^1(1-t^2)Q(t)\,dt .
\]
\end{lemma}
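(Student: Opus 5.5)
The plan is to split $[-1,1]$ into a central part, where the weight $1-t^2$ is bounded below, and two short end intervals. The mass of $Q$ on the end intervals will be controlled by $\|Q\|_{L^\infty([-1,1])}$, which is in turn bounded by $\int_{-1}^1Q$ via a Nikolskii-type inequality for nonnegative polynomials obtained from the classical Markov inequality.

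\emph{Step 1 (Nikolskii bound).} Let $t_0\in[-1,1]$ be a point where $Q$ attains $\|Q\|_\infty:=\max_{[-1,1]}Q$. By the Markov inequality $\|Q'\|_{L^\infty([-1,1])}\le D^2\|Q\|_\infty$ we have
\[
Q(t)\ge \|Q\|_\infty\bigl(1-D^2|t-t_0|\bigr)
\]
for $t\in[-1,1]$. Since $D\ge1$, the interval of length $D^{-2}\le 1$ adjacent to $t_0$ on one of its sides lies inside $[-1,1]$. On that interval the right-hand side is a nonnegative linear function, and integrating it gives the triangle area $\|Q\|_\infty/(2D^2)$. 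As $Q\ge0$ elsewhere, we obtain
\[
\|Q\|_\infty\le 2D^2\int_{-1}^1Q(t)\,dt .
\]
This step carries the only real content; the rest is bookkeeping of constants. The delicate point is to use the triangle bound rather than a cruder "half-height" bound, which would lose a factor $2$ and not reach $16D^2$.

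\emph{Step 2 (ends).} Fix $a\in(0,1)$ and let $E:=\{1-a<|t|\le1\}$, a set of length $2a$. Then
\[
\int_E Q\le 2a\|Q\|_\infty\le 4aD^2\int_{-1}^1Q .
\]
Choosing $a=1/(8D^2)$ gives $\int_EQ\le\frac12\int_{-1}^1Q$. Hence
\[
\int_{-1}^1Q\le 2\int_{|t|\le 1-a}Q .
\]

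\emph{Step 3 (middle).} For $|t|\le1-a$ we have $1-t^2\ge 1-(1-a)^2=a(2-a)$. Combining this with Step 2 gives
\[
\int_{-1}^1Q\le \frac{2}{a(2-a)}\int_{|t|\le1-a}(1-t^2)Q\le \frac{16D^2}{2-a}\int_{-1}^1(1-t^2)Q .
\]
The last inequality uses $Q\ge0$. Since $a\le 1/8$, we have $2-a\ge1$, and this yields the claimed estimate with constant $16D^2$.
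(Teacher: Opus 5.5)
Your proof is correct and follows the same route as the paper. The paper also bounds the two end strips of width $1/(8D^2)$ by $\frac12\int_{-1}^1Q$ using the Nikolskii bound $\max_{[-1,1]}Q\le 2D^2\int_{-1}^1Q$, and then uses $1-t^2\ge 1/(8D^2)$ on the middle part. The only differences are that you derive the Nikolskii bound from Markov's inequality via the triangle argument, whereas the paper cites it from DeVore--Lorentz, and that your lower bound $a(2-a)$ for the weight is slightly sharper than needed.
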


\begin{proof}
By the standard Nikolskii inequality
(see, for example, \cite[Chapter~4, Theorem~2.6]{DL-book}), we have
\[
\max_{t\in[-1,1]} Q(t)
\le
2D^2 \int_{-1}^1 Q(t)\,dt .
\]
Then, for any $\delta\in(0,1)$,
\[
\int_{\{1-\delta\le |t|\le 1\}}Q(t)\,dt
\le
2\delta \max_{t\in[-1,1]}Q(t)
\le
4\delta D^2\int_{-1}^1Q(t)\,dt.
\]
Let
\[
\delta:=\frac{1}{8D^2}.
\]
Then
\[
\int_{\{|t|\le 1-\delta\}}Q(t)\,dt
\ge
\frac12\int_{-1}^1Q(t)\,dt.
\]
Therefore
\[
\int_{-1}^1(1-t^2)Q(t)\,dt
\ge
\delta\int_{\{|t|\le 1-\delta\}}Q(t)\,dt
\ge
\frac{\delta}{2}\int_{-1}^1Q(t)\,dt.
\]
Since $\delta=1/(8D^2)$, this gives
\[
\int_{-1}^1Q(t)\,dt
\le
16D^2
\int_{-1}^1(1-t^2)Q(t)\,dt.
\]
This is the announced estimate.
\end{proof}

\begin{lemma}\label{lem-2-2}
For every $n,k,d\in\mathbb N$ and every
$f\in\mathcal P_d(\mathbb R^n)$, one has
\[
\|\nabla f\|_{L^{2k}([-1,1]^n)}
\le
8kd\,
\||\nabla f|_0\|_{L^{2k}([-1,1]^n)}.
\]
\end{lemma}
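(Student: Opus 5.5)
The plan is to expand $|\nabla f|^{2k}$ into a sum of nonnegative polynomial terms and to insert the weights $(1-x_j^2)$ one factor at a time, using the one-dimensional estimate of Lemma~\ref{lem-2-1} in each coordinate. Since both sides of the claim are $2k$-th roots, it suffices to prove
\[
\int_{[-1,1]^n}|\nabla f|^{2k}\,dx
\le
(8kd)^{2k}\int_{[-1,1]^n}|\nabla f|_0^{2k}\,dx .
\]

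First, I expand both integrands over multi-indices $(j_1,\ldots,j_k)\in\{1,\ldots,n\}^k$:
\[
|\nabla f|^{2k}=\sum_{j_1,\ldots,j_k}\prod_{i=1}^k(\partial_{x_{j_i}}f)^2,
\qquad
|\nabla f|_0^{2k}=\sum_{j_1,\ldots,j_k}\prod_{i=1}^k(1-x_{j_i}^2)(\partial_{x_{j_i}}f)^2 .
\]
It therefore suffices to prove, for each fixed multi-index, the termwise inequality
\[
\int_{[-1,1]^n} Q\,dx\le (8kd)^{2k}\int_{[-1,1]^n}\prod_{i=1}^k(1-x_{j_i}^2)\,Q\,dx,
\qquad
Q:=\prod_{i=1}^k(\partial_{x_{j_i}}f)^2 .
\]
Summing these termwise bounds over all multi-indices then gives the displayed inequality.

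To prove the termwise inequality, I insert the weights one at a time, in $k$ steps. At step $s$, the current integrand is $Q$ multiplied by the weights already inserted. This integrand is nonnegative on the cube, and I view it as a polynomial in the single variable $x_{j_s}$ with all other variables frozen. I then apply Lemma~\ref{lem-2-1} in that variable and integrate over the remaining variables by Fubini. This multiplies the integrand by $(1-x_{j_s}^2)$ at the cost of a factor $16D^2$. An index $j$ may occur several times in the multi-index; this causes no difficulty, because the current integrand may already contain powers of $(1-x_j^2)$ and Lemma~\ref{lem-2-1} only requires nonnegativity on $[-1,1]$.

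The only point requiring care is the degree $D$ in the active variable. $Q$ has degree at most $2k(d-1)$ in each variable, and at most $2(k-1)$ more comes from the previously inserted weights. Hence $D\le 2kd$ at every step, so each step costs at most $16(2kd)^2=(8kd)^2$, and $k$ steps give $(8kd)^{2k}$, as required. Taking the $2k$-th root of the resulting inequality yields the claim. I do not expect a genuine obstacle here; the main thing to verify is this uniform degree bound, which holds because the degree in one variable never exceeds $2kd$ at any stage.
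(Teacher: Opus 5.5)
Your proposal is correct and takes essentially the same route as the paper. Both arguments expand $|\nabla f|^{2k}$ and $|\nabla f|_0^{2k}$ over multi-indices and insert the weights coordinate by coordinate using Lemma~\ref{lem-2-1} together with Fubini. The paper groups repeated indices into an iterated one-variable bound with cost $64^{m_j}(k(d-1)+m_j)^{2m_j}$, whereas you use the uniform degree bound $2kd$ at each of the $k$ steps; both give the same constant $(8kd)^{2k}$.
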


\begin{proof}
We first record a one-dimensional consequence of Lemma~\ref{lem-2-1}.
Let $D,r,m\in\mathbb N\cup\{0\}$ and let $Q\in\mathcal P_D(\mathbb R)$.
Then
\begin{equation}\label{eq-cor}
\int_{-1}^1(1-t^2)^r|Q(t)|^2\,dt
\le
64^m(D+r+m)^{2m}
\int_{-1}^1(1-t^2)^{r+m}|Q(t)|^2\,dt.
\end{equation}
Indeed, for $m=0$, this estimate is just the identity.
For $m\ge 1$,
applying Lemma~\ref{lem-2-1} to the non-negative polynomial
\[
(1-t^2)^r|Q(t)|^2,
\]
whose degree is at most $2(D+r)$, gives
\[
\int_{-1}^1(1-t^2)^r|Q(t)|^2\,dt
\le
64(D+r+1)^2
\int_{-1}^1(1-t^2)^{r+1}|Q(t)|^2\,dt.
\]
The shift $+1$ covers the case $D=r=0$. Iterating this estimate $m$ times gives
\eqref{eq-cor}.

For $(j_1,\ldots,j_k)\in\{1,\ldots,n\}^k$, set
\[
Q_{j_1,\ldots,j_k}:=\partial_{x_{j_1}}f\cdot\ldots\cdot\partial_{x_{j_k}}f\in
\mathcal P_{k(d-1)}(\mathbb R^n).
\]
For $j\in\{1,\ldots,n\}$, let $m_j=m_j((j_1,\ldots,j_k))$ be the number of occurrences of $j$ in $(j_1,\ldots,j_k)$. Then
\[
\sum_{j=1}^n m_j=k
\]
and
\[
\prod_{\ell=1}^k(1-x_{j_\ell}^2)
=
\prod_{j=1}^n(1-x_j^2)^{m_j}.
\]
Applying \eqref{eq-cor} successively in every variable gives
\begin{align*}
\int_{[-1,1]^n}|Q_{j_1,\ldots,j_k}|^2\,dx
&\le
\prod_{j=1}^n
64^{m_j}\bigl(k(d-1)+m_j\bigr)^{2m_j}
\int_{[-1,1]^n}
\prod_{j=1}^n(1-x_j^2)^{m_j}|Q_{j_1,\ldots,j_k}|^2\,dx
\\
&\le
64^k(kd)^{2k}
\int_{[-1,1]^n}
\prod_{\ell=1}^k(1-x_{j_\ell}^2)|Q_{j_1,\ldots,j_k}|^2\,dx
\\
&=
64^k(kd)^{2k}
\int_{[-1,1]^n}
(1-x_{j_1}^2)(\partial_{x_{j_1}}f)^2
\cdot\ldots\cdot
(1-x_{j_k}^2)(\partial_{x_{j_k}}f)^2\,dx.
\end{align*}
Now summing over all $(j_1,\ldots,j_k)$, and using
\[
|\nabla f|^{2k}
=
\sum_{1\le j_1,\ldots,j_k\le n}
\bigl(\partial_{x_{j_1}}f\cdot\ldots\cdot\partial_{x_{j_k}}f\bigr)^2
\]
and
\[
|\nabla f|_0^{2k}
=
\sum_{1\le j_1,\ldots,j_k\le n}
(1-x_{j_1}^2)(\partial_{x_{j_1}}f)^2
\cdot\ldots\cdot
(1-x_{j_k}^2)(\partial_{x_{j_k}}f)^2,
\]
we obtain
\[
\int_{[-1,1]^n}|\nabla f|^{2k}\,dx
\le
(8kd)^{2k}
\int_{[-1,1]^n}|\nabla f|_0^{2k}\,dx.
\]
Taking the power $1/(2k)$ gives the claimed estimate.
\end{proof}

\begin{lemma}\label{lem-2-3}
Let $u\in C^\infty(\mathbb R^n, \mathbb{R}^n)$. Then
\[
\int_{[-1, 1]^n}(\delta_0 u)^2\,dx
\le
2
\int_{[-1, 1]^n}
|u|_0^2\,dx
+
\int_{[-1, 1]^n}
\sum_{j,k=1}^n
(1-x_j^2)(1-x_k^2)
(\partial_{x_j}u_k)^2\,dx.
\]
\end{lemma}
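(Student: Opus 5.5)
The plan is to follow the Gaussian computation \eqref{int-by-parts-2}, but to treat the diagonal and off-diagonal parts of $(\delta_0u)^2$ separately. Expanding the square,
\[
\int_{[-1,1]^n}(\delta_0u)^2\,dx=\sum_{j,k=1}^n\int_{[-1,1]^n}\delta_{0,x_j}u_j\,\delta_{0,x_k}u_k\,dx .
\]
I expect the diagonal terms $j=k$ to produce exactly $2\int|u|_0^2\,dx$ plus the diagonal part of the Hessian-type term. The off-diagonal terms $j\ne k$ should be controlled by Cauchy--Schwarz.

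\emph{Off-diagonal terms.} For $j\ne k$, the operator $\delta_{0,x_k}$ involves only the variable $x_k$, so it commutes with $\partial_{x_j}$ and with multiplication by $(1-x_j^2)$. Apply \eqref{jacobi-int-by-parts-1} in the variable $x_j$, with $f=\delta_{0,x_k}u_k$. This moves $\delta_{0,x_j}$ off $u_j$ and gives
\[
\int(1-x_j^2)\,u_j\,\delta_{0,x_k}\partial_{x_j}u_k\,dx .
\]
Next apply \eqref{jacobi-int-by-parts-1} in $x_k$, to the function $(1-x_j^2)u_j$. This gives
\[
\int(1-x_j^2)(1-x_k^2)\,\partial_{x_k}u_j\,\partial_{x_j}u_k\,dx .
\]
By $ab\le\frac12(a^2+b^2)$ and symmetrization over the pair $(j,k)$, the sum over $j\ne k$ is at most
\[
\sum_{j\ne k}\int(1-x_j^2)(1-x_k^2)(\partial_{x_j}u_k)^2\,dx .
\]

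\emph{Diagonal terms.} This is a one-dimensional identity in $x_j$, with the other variables frozen. Put $v=u_j$, $t=x_j$ and $w=(1-t^2)v$. Then
\[
\delta_{0,t}v=2tv-(1-t^2)v'=-w' .
\]
Expanding,
\[
w'^2=(1-t^2)^2v'^2-2t(1-t^2)(v^2)'+4t^2v^2 .
\]
Integrate the middle term by parts on $[-1,1]$. The boundary terms vanish because $t(1-t^2)=0$ at $t=\pm1$, and the middle term becomes $\int(2-6t^2)v^2\,dt$. Hence
\[
\int_{-1}^1(\delta_{0,t}v)^2\,dt=\int_{-1}^1(1-t^2)^2v'^2\,dt+2\int_{-1}^1(1-t^2)v^2\,dt .
\]
Integrating over the remaining variables and summing over $j$, the diagonal part contributes exactly
\[
2\int|u|_0^2\,dx+\sum_j\int(1-x_j^2)^2(\partial_{x_j}u_j)^2\,dx .
\]

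Adding the two parts gives the lemma. The only delicate points are the following. First, one must check that all boundary terms vanish; they do, because every integration by parts carries a factor $(1-x_j^2)$ that is zero on the relevant faces. Second, one must check the commutation claim in the off-diagonal step: $\delta_{0,x_k}$ and $\partial_{x_j}$ act in different variables, which is precisely why the troublesome commutator term $2x_j\partial_{x_j}$ that appears for $j=k$ is avoided by the separate diagonal treatment.
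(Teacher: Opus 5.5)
Your proof is correct and follows essentially the same route as the paper: the off-diagonal terms are handled by two applications of \eqref{jacobi-int-by-parts-1}, using that $\delta_{0,x_k}$ commutes with $\partial_{x_j}$ for $j\ne k$; the diagonal terms give exactly $2\int(1-x_j^2)u_j^2\,dx+\int(1-x_j^2)^2(\partial_{x_j}u_j)^2\,dx$; and the cross terms are then bounded by Cauchy--Schwarz (AM--GM). The only cosmetic difference is that you verify the diagonal identity directly from $\delta_{0,t}v=-\bigl((1-t^2)v\bigr)'$, whereas the paper expands $\partial_{x_j}(\delta_{0,x_j}u_j)$ and integrates by parts once more.
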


\begin{proof}
Applying the integration by parts formula \eqref{jacobi-int-by-parts}
with $f=\delta_0 u\in C^\infty(\mathbb{R}^n)$ and the vector field $u$, we get
\[
\int_{[-1,1]^n}(\delta_0 u)^2\,dx
=
\sum_{j,k=1}^n
\int_{[-1,1]^n}
(1-x_j^2)u_j
\partial_{x_j}(\delta_{0, x_k}u_k)
\,dx.
\]

When $j\ne k$, we have
\[
\partial_{x_j}(\delta_{0, x_k}u_k)=
\partial_{x_j}
\bigl(
2x_ku_k
-
(1-x_k^2)\partial_{x_k}u_k
\bigr)
=
2x_k\partial_{x_j}u_k
-
(1-x_k^2)\partial_{x_k}\partial_{x_j}u_k
=\delta_{0, x_k}(\partial_{x_j}u_k).
\]
Applying the integration by parts formula
\eqref{jacobi-int-by-parts-1}
in the variable
$x_k$, we obtain
\begin{align*}
\int_{[-1,1]^n}
(1-x_j^2)u_j
\partial_{x_j}(\delta_{0, x_k}u_k)
\,dx
&=
\int_{[-1,1]^n}
(1-x_j^2)u_j\delta_{0, x_k}(\partial_{x_j}u_k)
\,dx
\\
&=
\int_{[-1,1]^n}
(1-x_j^2)(1-x_k^2)
\partial_{x_k}u_j\,\partial_{x_j}u_k\,dx.
\end{align*}

Now let $j=k$. Then
\begin{align*}
&
\partial_{x_j}(\delta_{0, x_j}u_j)
=\partial_{x_j}
\bigl(
2x_ju_j
-
(1-x_j^2)\partial_{x_j}u_j
\bigr)
\\
&=
2u_j
+
4x_j\partial_{x_j}u_j
-
(1-x_j^2)\partial_{x_jx_j}^2u_j 
= 2u_j
+
2x_j\partial_{x_j}u_j + \delta_{0, x_j}(\partial_{x_j}u_j).
\end{align*}
Therefore,
\begin{align*}
&\int_{[-1,1]^n}
(1-x_j^2)u_j
\partial_{x_j}(\delta_{0, x_j}u_j)\,dx
=
\int_{[-1,1]^n}
(1-x_j^2)u_j\bigl(
2u_j
+
2x_j\partial_{x_j}u_j + \delta_{0, x_j}(\partial_{x_j}u_j)\bigr)
\,dx
\\
&=
2
\int_{[-1,1]^n}
(1-x_j^2)u_j^2\,dx
+
\int_{[-1,1]^n}
(1-x_j^2)u_j
\bigl(
2x_j\partial_{x_j}u_j
+ \delta_{0, x_j}(\partial_{x_j}u_j)\bigr)\,dx.
\end{align*}
Applying the integration by parts formula \eqref{jacobi-int-by-parts-1} in the second integral gives
\begin{align*}
&\int_{[-1,1]^n}
(1-x_j^2)u_j
\bigl(
2x_j\partial_{x_j}u_j
+ \delta_{0, x_j}(\partial_{x_j}u_j)\bigr)\,dx
\\
&=
\int_{[-1,1]^n}
(1-x_j^2)u_j
2x_j\partial_{x_j}u_j
+(1-x_j^2)\partial_{x_j}\bigl((1-x_j^2)u_j\bigr)
\partial_{x_j}u_j
\,dx
\\
&=
\int_{[-1,1]^n}
(1-x_j^2)^2(\partial_{x_j}u_j)^2\,dx.
\end{align*}
Combining the diagonal and off-diagonal terms, we obtain
\begin{align*}
\int_{[-1,1]^n}(\delta_0 u)^2\,dx
&=
2
\int_{[-1,1]^n}
|u|_0^2\,dx
+
\int_{[-1,1]^n}
\sum_{j,k=1}^n
(1-x_j^2)(1-x_k^2)
\partial_{x_j}u_k\,\partial_{x_k}u_j\,dx.
\end{align*}
By the Cauchy--Schwarz inequality,
\begin{align*}
&\sum_{j,k=1}^n
(1-x_j^2)(1-x_k^2)
\partial_{x_j}u_k\,\partial_{x_k}u_j
\\
&=
\sum_{j,k=1}^n
\bigl((1-x_j^2)^{1/2}(1-x_k^2)^{1/2}
\partial_{x_j}u_k\bigr)
\bigl((1-x_j^2)^{1/2}(1-x_k^2)^{1/2}
\partial_{x_k}u_j\bigr)
\\
&\le
\sum_{j,k=1}^n
(1-x_j^2)(1-x_k^2)(\partial_{x_j}u_k)^2.
\end{align*}
Thus,
\begin{align*}
\int_{[-1,1]^n}(\delta_0 u)^2\,dx
&\le
2
\int_{[-1,1]^n}
|u|_0^2\,dx+
\int_{[-1,1]^n}
\sum_{j,k=1}^n
(1-x_j^2)(1-x_k^2)
(\partial_{x_j}u_k)^2\,dx.
\end{align*}
This is the announced estimate.
\end{proof}

\begin{lemma}\label{lem-2-4}
Let $\alpha_1,\ldots,\alpha_n>-1$ and let
\[
\mu_{\alpha_j}(dt):=c_{\alpha_j}(1-t^2)^{\alpha_j}\,dt
\]
be the corresponding probability measures on $[-1,1]$. Set
\[
\mu_\alpha(dx)
:=
\bigotimes_{j=1}^n \mu_{\alpha_j}(dx_j)
=
\prod_{j=1}^n c_{\alpha_j}(1-x_j^2)^{\alpha_j}\,dx
\]
and
\[
\alpha_*:=\max_{1\le j\le n}\alpha_j.
\]
Then, for every $f\in\mathcal P_d(\mathbb R^n)$,
\[
\int_{[-1,1]^n}
|\nabla f|_0^2
\,d\mu_\alpha
\le
d(d+2\alpha_*+1)
\int_{[-1,1]^n}f^2\,d\mu_\alpha .
\]
\end{lemma}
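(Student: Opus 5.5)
Expand $f$ in the product Jacobi (Gegenbauer) basis orthogonal with respect to $\mu_\alpha$ and use the Jacobi operator. For each $j$, let
\[
L_{\alpha_j}g:=(1-t^2)g''-(2\alpha_j+2)t\,g'.
\]
This operator is symmetric in $L^2(\mu_{\alpha_j})$, and integration by parts gives
\[
\int(1-t^2)(g')^2\,d\mu_{\alpha_j}=-\int g\,L_{\alpha_j}g\,d\mu_{\alpha_j}.
\]
The boundary terms vanish because $(1-t^2)^{\alpha_j+1}\to0$ at $\pm1$ when $\alpha_j>-1$. The orthogonal polynomials $P^{(\alpha_j)}_m$ (Jacobi with parameters $(\alpha_j,\alpha_j)$) are eigenfunctions:
\[
-L_{\alpha_j}P^{(\alpha_j)}_m=m(m+2\alpha_j+1)P^{(\alpha_j)}_m .
\]

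Set $\mathcal L:=\sum_j L_{\alpha_j}$, acting in the variable $x_j$ for the $j$-th term. Applying the one-dimensional integration by parts in each coordinate and summing gives
\[
\int|\nabla f|_0^2\,d\mu_\alpha=-\int f\,\mathcal Lf\,d\mu_\alpha .
\]
Write $f=\sum_{\beta}c_\beta\prod_j P^{(\alpha_j)}_{\beta_j}(x_j)$. Since $\deg f\le d$, only multi-indices with $|\beta|\le d$ occur. The tensor products are orthogonal in $L^2(\mu_\alpha)$ and are eigenfunctions of $-\mathcal L$ with eigenvalue $\lambda_\beta=\sum_j\beta_j(\beta_j+2\alpha_j+1)$. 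Hence
\[
-\int f\,\mathcal Lf\,d\mu_\alpha=\sum_\beta\lambda_\beta c_\beta^2\|P_\beta\|^2\le \max_{|\beta|\le d}\lambda_\beta\int f^2\,d\mu_\alpha .
\]

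It remains to bound $\lambda_\beta$. If $\alpha_*\ge -1/2$, then
\[
\lambda_\beta\le\sum_j\beta_j(\beta_j+2\alpha_*+1)\le |\beta|^2+(2\alpha_*+1)|\beta|\le d(d+2\alpha_*+1),
\]
using $\sum\beta_j^2\le(\sum\beta_j)^2$ and $2\alpha_*+1\ge0$.

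The case $\alpha_*<-1/2$ is the main obstacle, since the coefficient $2\alpha_j+1$ is then negative. In this case I would use $\beta_j(2\alpha_j+1)\le\beta_j(2\alpha_*+1)$ and need
\[
\sum_j\beta_j^2+(2\alpha_*+1)|\beta|\le d^2+(2\alpha_*+1)d .
\]
Since $\sum\beta_j^2\le|\beta|^2$, it suffices that $s\mapsto s^2+(2\alpha_*+1)s$ is nondecreasing on the integers $0\le s\le d$. This holds because $2\alpha_*+1>-1$: the increment $(s+1)^2-s^2+(2\alpha_*+1)=2s+2+2\alpha_*>0$ for $s\ge0$. So $\lambda_\beta\le d(d+2\alpha_*+1)$ in all cases.

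The only technical point besides this is justifying the integration by parts with possibly singular weights ($-1<\alpha_j<0$). Since everything is polynomial, the weight $(1-t^2)^{\alpha_j+1}$ kills the boundary terms, so this is routine.
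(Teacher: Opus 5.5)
Your proof is correct and follows the paper's argument. Both integrate by parts against the product Jacobi operator, expand $f$ in the tensor-product Jacobi basis of $\mathcal P_d(\mathbb R^n)$, and bound the eigenvalues $\lambda_\beta=\sum_j\beta_j(\beta_j+2\alpha_j+1)$. The only difference is in this last bound, and it is cosmetic. The paper avoids your case split by using $\beta_j+2\alpha_j+1\le d+2\alpha_*+1$ termwise, which gives $\lambda_\beta\le(d+2\alpha_*+1)|\beta|\le d(d+2\alpha_*+1)$ because $d+2\alpha_*+1>d-1\ge0$. Your monotonicity argument also works for every $\alpha_*>-1$ on its own, so the split is not needed either way.
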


\begin{proof}
For each $j\in\{1,\ldots,n\}$, consider the one-dimensional Jacobi operator
\[
L_{\alpha_j}h(t)
=
(1-t^2)h''(t)-2(\alpha_j+1)t h'(t).
\]
An integration by parts gives, for polynomials $h_1,h_2$,
\[
\int_{-1}^1(1-t^2)h_1'(t)h_2'(t)\,\mu_{\alpha_j}(dt)
=
-\int_{-1}^1 h_1 L_{\alpha_j}h_2\,d\mu_{\alpha_j}.
\]
Define
\[
L_\alpha f
:=
\sum_{j=1}^n
\bigl(
(1-x_j^2)\partial_{x_jx_j}^2f
-
2(\alpha_j+1)x_j\partial_{x_j}f
\bigr).
\]
Applying the preceding integration by parts formula in each coordinate gives
\[
\int_{[-1,1]^n}
|\nabla f|_0^2\,d\mu_\alpha
=
\int_{[-1,1]^n}
\sum_{j=1}^n(1-x_j^2)(\partial_{x_j}f)^2\,d\mu_\alpha
=
-\int_{[-1,1]^n} f L_\alpha f\,d\mu_\alpha.
\]

Let $\{u_m^{(\alpha_j)}\colon m\in\mathbb N\cup\{0\}\}$ be the system
obtained by normalizing the Jacobi polynomials
$P_m^{(\alpha_j,\alpha_j)}$ in $L^2(\mu_{\alpha_j})$. Then
$\{u_m^{(\alpha_j)}\colon m\in\mathbb N\cup\{0\}\}$ is an orthonormal
system in $L^2(\mu_{\alpha_j})$. The Jacobi differential equation (see~\cite[Chapter IV, formula (4.2.1)]{SzegoOP})
gives
\[
L_{\alpha_j}u_m^{(\alpha_j)}
=
-m(m+2\alpha_j+1)u_m^{(\alpha_j)}.
\]

For a multi-index ${\bf m}=(m_1,\ldots,m_n)$, $m_j\in\mathbb N\cup\{0\}$, set
\[
U_{\bf m}(x):=
\prod_{j=1}^n u_{m_j}^{(\alpha_j)}(x_j),
\quad
|{\bf m}|:=m_1+\ldots+m_n.
\]
The system $\{U_{\bf m}\colon |{\bf m}|\le d\}$ is an orthonormal basis of
$\mathcal P_d(\mathbb R^n)$ with respect to the inner product of
$L^2(\mu_\alpha)$.
Moreover,
\[
L_\alpha U_{\bf m}
=
-\lambda_{\bf m}U_{\bf m},
\]
where
\[
\lambda_{\bf m}
=
\sum_{j=1}^n m_j(m_j+2\alpha_j+1).
\]

Since $f\in\mathcal P_d(\mathbb R^n)$, we may write
\[
f=\sum_{|{\bf m}|\le d}a_{\bf m}U_{\bf m}.
\]
Therefore
\[
\int_{[-1,1]^n}f^2\,d\mu_\alpha
=
\sum_{|{\bf m}|\le d}|a_{\bf m}|^2
\]
and
\[
-\int_{[-1,1]^n}fL_\alpha f\,d\mu_\alpha
=
\sum_{|{\bf m}|\le d}\lambda_{\bf m}|a_{\bf m}|^2.
\]

Now we note that, for $|{\bf m}|\le d$, one has
\[
\lambda_{\bf m}
=
\sum_{j=1}^n m_j(m_j+2\alpha_j+1)
\le
(d+2\alpha_*+1)\sum_{j=1}^n m_j
=
(d+2\alpha_*+1)|{\bf m}|
\le
d(d+2\alpha_*+1).
\]
This implies the announced estimate.
\end{proof}

\subsection{Key lemmas}

\begin{lemma}\label{lem-2-5}
Let $r\ge q\ge 1$ and let $f\in\mathcal P_d(\mathbb R^n)$. Then
\begin{align*}
&\int_{[-1,1]^n}
|\nabla f|_0^{r+2}\,dx
\le
2\bigl\|f|\nabla f|_0^{r-q}\bigr\|_{L^2([-1, 1]^n)}
\biggl(
\int_{[-1,1]^n}|\nabla f|_0^{2q+2}\,dx
\\
&+
\int_{[-1,1]^n}|\nabla f|_0^{2q}
\sum_{j,k=1}^n
(1-x_j^2)(1-x_k^2)
(\partial_{x_jx_k}^2f)^2\,dx
+
\frac{r^2}{4}\int_{[-1,1]^n}
|\nabla f|_0^{2q-2}
\bigl|\nabla |\nabla f|_0^2\bigr|_0^2
\,dx
\biggr)^{1/2}.
\end{align*}
\end{lemma}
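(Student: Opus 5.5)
Following the proof of Lemma~\ref{lem-1-1}, I replace Gaussian calculus by the cube calculus: the weighted pairing $\langle\cdot,\cdot\rangle_0$, the divergence $\delta_0$, the integration by parts formula \eqref{jacobi-int-by-parts}, and Lemma~\ref{lem-2-3} in place of \eqref{int-by-parts-2}.

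For $a\ge 0$ consider the vector field
\[
u_a:=|\nabla f|_0^{a}\nabla f .
\]
Since $q\ge1$, the function $|\nabla f|_0^{a}=(|\nabla f|_0^2)^{a/2}$ with $a\ge q\ge1$ is at least $C^1$ wherever it is needed, but not smooth. So I first work with a cutoff version $|\nabla f|_0^a\eta_\varepsilon(|\nabla f|_0)\nabla f$, exactly as in Lemma~\ref{lem-1-1}, or approximate by $(|\nabla f|_0^2+\varepsilon)^{a/2}$. I then pass to the limit by dominated convergence. Since the integration domain is compact and everything is bounded, this is painless.

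By \eqref{jacobi-int-by-parts},
\[
\int_{[-1,1]^n}|\nabla f|_0^{r+2}\,dx=\int\langle\nabla f,u_r\rangle_0\,dx=\int f\,\delta_0u_r\,dx .
\]
Since $\delta_{0,x_j}(hv)=h\,\delta_{0,x_j}v-(1-x_j^2)v\,\partial_{x_j}h$, we get
\[
\delta_0 u_r=|\nabla f|_0^{r-q}\,\delta_0u_q-\langle u_q,\nabla |\nabla f|_0^{r-q}\rangle_0 .
\]
Moreover,
\[
\nabla|\nabla f|_0^{r-q}=\tfrac{r-q}{2}|\nabla f|_0^{r-q-2}\nabla|\nabla f|_0^2 .
\]
Hence
\[
\delta_0u_r=|\nabla f|_0^{r-q}\Bigl(\delta_0u_q-\tfrac{r-q}{2}|\nabla f|_0^{q-2}\langle\nabla f,\nabla|\nabla f|_0^2\rangle_0\Bigr).
\]
Applying Cauchy--Schwarz, and then $\langle\nabla f,v\rangle_0^2\le|\nabla f|_0^2|v|_0^2$, the second factor is at most
\[
2\int(\delta_0u_q)^2+\tfrac{(r-q)^2}{2}\int|\nabla f|_0^{2q-2}\bigl|\nabla|\nabla f|_0^2\bigr|_0^2 .
\]

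Next I apply Lemma~\ref{lem-2-3} to $u_q$. The term $2\int|u_q|_0^2$ equals $2\int|\nabla f|_0^{2q+2}$. For the derivative term,
\[
\partial_{x_j}(u_q)_k=|\nabla f|_0^q\,\partial^2_{x_jx_k}f+\tfrac q2|\nabla f|_0^{q-2}\,\partial_{x_j}(|\nabla f|_0^2)\,\partial_{x_k}f .
\]
Using $(a+b)^2\le2a^2+2b^2$, the weighted sum $\sum_{j,k}(1-x_j^2)(1-x_k^2)(\cdot)^2$ is bounded by
\[
2|\nabla f|_0^{2q}\sum_{j,k}(1-x_j^2)(1-x_k^2)(\partial^2_{x_jx_k}f)^2+\tfrac{q^2}{2}|\nabla f|_0^{2q-2}\bigl|\nabla|\nabla f|_0^2\bigr|_0^2 .
\]
Here the factorization $\sum_{j,k}(1-x_j^2)(1-x_k^2)a_j^2b_k^2=|a|_0^2|b|_0^2$ is used.

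Collecting terms, the bracket is at most
\[
4\int|\nabla f|_0^{2q+2}+4\int|\nabla f|_0^{2q}\sum_{j,k}(\cdots)+\bigl(q^2+\tfrac{(r-q)^2}{2}\bigr)\int|\nabla f|_0^{2q-2}\bigl|\nabla|\nabla f|_0^2\bigr|_0^2 .
\]
Since $q^2+(r-q)^2/2\le r^2$ for $r\ge q\ge0$, this is at most $4$ times the bracket in the statement. Taking the square root gives the factor $2$.

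The main obstacle is the regularization near $\{|\nabla f|_0=0\}$. Unlike the Gaussian case, the weight $1-x_j^2$ also vanishes on the boundary. With the cutoff $\eta_\varepsilon(|\nabla f|_0)$, the derivative of the cutoff contributes $|\nabla f|_0^{q-1}|\eta'_\varepsilon|\,|\nabla|\nabla f|_0^2|_0$. This is handled exactly as in Lemma~\ref{lem-1-1}: it is a term bounded by $C|\nabla f|_0^{q-2}\bigl|\nabla|\nabla f|_0^2\bigr|_0 I_{\{\varepsilon\le|\nabla f|_0\le2\varepsilon\}}$, which vanishes in the limit by dominated convergence because $q\ge1$ keeps it integrable. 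The boundary terms in \eqref{jacobi-int-by-parts} vanish thanks to the factor $1-x_j^2$, and \eqref{jacobi-int-by-parts} only requires enough smoothness, which the cutoff fields have.
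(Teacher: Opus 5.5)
Your proposal is correct and follows the paper's proof essentially step by step. You use the same fields $|\nabla f|_0^a\nabla f$ with a cutoff, the same factorization of $\delta_0u_r$ through $\delta_0u_q$, Cauchy--Schwarz, Lemma~\ref{lem-2-3} applied to $u_q$, the bound $q^2+(r-q)^2/2\le r^2$, and dominated convergence. The one cosmetic difference is that the paper cuts off with $\eta_\varepsilon(|\nabla f|_0^2)$, viewing the coefficient as a smooth function of the polynomial $|\nabla f|_0^2$ extended by zero for negative arguments; this makes smoothness on all of $\mathbb R^n$ immediate, since $|\nabla f|_0^2$ can be negative off the cube. Your $\eta_\varepsilon(|\nabla f|_0)$, or the $(|\nabla f|_0^2+\varepsilon)^{a/2}$ variant, works equally well on the closed cube.
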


\begin{proof}
The proof is parallel to the proof of the Gaussian counterpart,
Lemma~\ref{lem-1-1}.

Let $\eta\in C^\infty(\mathbb R)$ be such that
\[
\eta(t)=0 \quad \text{for } t\le1,
\quad
\eta(t)=1 \quad \text{for } t\ge2,
\quad
0\le \eta(t)\le 1 \quad \text{for all } t\in\mathbb R.
\]
For $\varepsilon>0$, set
$\eta_\varepsilon(t):=\eta(t/\varepsilon)$.

For $a\ge 1$, we consider the vector field
\[
u_{a,\varepsilon}
:=
|\nabla f|_0^a\eta_\varepsilon(|\nabla f|_0^2)\nabla f.
\]
We understand the coefficient
\[
|\nabla f|_0^a\eta_\varepsilon(|\nabla f|_0^2)
\]
as the composition of the polynomial $|\nabla f|_0^2$ with the smooth function
on $\mathbb R$ which equals $s^{a/2}\eta_\varepsilon(s)$ for $s\ge0$ and is
zero for $s<0$. Thus, 
\[
u_{a,\varepsilon}
:=
|\nabla f|_0^a\eta_\varepsilon(|\nabla f|_0^2)\nabla f\in C^\infty(\mathbb R^n,\mathbb R^n).
\]

We have
\begin{align*}
&\delta_0 u_{r,\varepsilon}
=
\sum_{j=1}^n
\bigl(2x_ju_{r,\varepsilon,j}
-
(1-x_j^2)\partial_{x_j}u_{r,\varepsilon,j}\bigr)
\\
&=
-|\nabla f|_0^r\eta_\varepsilon(|\nabla f|_0^2)L_0 f
-
\Bigl(
\frac r2 |\nabla f|_0^{r-2}\eta_\varepsilon(|\nabla f|_0^2)
+
|\nabla f|_0^r\eta_\varepsilon'(|\nabla f|_0^2)
\Bigr)\langle \nabla |\nabla f|_0^2, \nabla f\rangle_0.
\end{align*}
Therefore,
\begin{align*}
\delta_0 u_{r,\varepsilon}
&=
|\nabla f|_0^{r-q}
\Bigl(
\delta_0 u_{q,\varepsilon}
-
\frac{r-q}{2}
|\nabla f|_0^{q-2}\eta_\varepsilon(|\nabla f|_0^2)
\langle \nabla|\nabla f|_0^2, \nabla f\rangle_0
\Bigr).
\end{align*}

By the integration by parts formula \eqref{jacobi-int-by-parts},
\[
\int_{[-1,1]^n}
|\nabla f|_0^{r+2}I_{\{|\nabla f|_0^2\ge2\varepsilon\}}\,dx
\le
\int_{[-1,1]^n}\langle \nabla f, u_{r,\varepsilon}\rangle_0\,dx
=
\int_{[-1,1]^n}f\,\delta_0 u_{r,\varepsilon}\,dx.
\]
Hence, by the Cauchy--Schwarz inequality,
\begin{align}\label{eq-CS-est}
&\int_{[-1,1]^n}
|\nabla f|_0^{r+2}I_{\{|\nabla f|_0^2\ge2\varepsilon\}}\,dx
\\
&\le
\bigl\|f|\nabla f|_0^{r-q}\bigr\|_{L^2}
\biggl(
\int_{[-1,1]^n}
\Bigl(
\delta_0 u_{q,\varepsilon}
-
\frac{r-q}{2}
|\nabla f|_0^{q-2}\eta_\varepsilon(|\nabla f|_0^2)
\langle \nabla|\nabla f|_0^2, \nabla f\rangle_0
\Bigr)^2
\,dx
\biggr)^{1/2}.\notag
\end{align}
The last integral is bounded by
\begin{equation}\label{eq-lem-2-4-middle}
2\int_{[-1,1]^n}(\delta_0 u_{q,\varepsilon})^2\,dx
+
\frac{(r-q)^2}{2}
\int_{[-1,1]^n}
|\nabla f|_0^{2q-2}
\bigl|\nabla|\nabla f|_0^2\bigr|_0^2
\,dx.
\end{equation}
Applying Lemma~\ref{lem-2-3} to $u_{q,\varepsilon}$ in the first term gives
\[
\int_{[-1,1]^n}(\delta_0 u_{q,\varepsilon})^2\,dx
\le
2
\int_{[-1,1]^n}|u_{q,\varepsilon}|_0^2\,dx
+
\int_{[-1,1]^n}
\sum_{j,k=1}^n
(1-x_j^2)(1-x_k^2)
(\partial_{x_j}u_{q,\varepsilon,k})^2\,dx.
\]
By the definition of $u_{q, \varepsilon}$,
\[
\int_{[-1,1]^n}|u_{q,\varepsilon}|_0^2\,dx
\le
\int_{[-1,1]^n}|\nabla f|_0^{2q+2}\,dx.
\]
Next,
\[
u_{q,\varepsilon,k}
=
|\nabla f|_0^q\eta_\varepsilon(|\nabla f|_0^2)\partial_{x_k}f,
\]
and
\[
\partial_{x_j}u_{q,\varepsilon,k}
=
|\nabla f|_0^q\eta_\varepsilon(|\nabla f|_0^2)\partial_{x_jx_k}^2f
+
\Bigl(
\frac q2 |\nabla f|_0^{q-2}\eta_\varepsilon(|\nabla f|_0^2)
+
|\nabla f|_0^q\eta_\varepsilon'(|\nabla f|_0^2)
\Bigr)
\partial_{x_j}\bigl(|\nabla f|^2_0\bigr)\,\partial_{x_k}f.
\]
On the cube $[-1, 1]^n$ we have
\[
\Bigl|
\frac q2 |\nabla f|_0^{q-2}\eta_\varepsilon(|\nabla f|_0^2)
+
|\nabla f|_0^q\eta_\varepsilon'(|\nabla f|_0^2)
\Bigr|
\le \Bigl(\frac q2+2\|\eta'\|_\infty
I_{\{\varepsilon\le |\nabla f|_0^2\le 2\varepsilon\}}\Bigr) |\nabla f|_0^{q-2}.
\]
Therefore, on the cube $[-1, 1]^n$,
\begin{align*}
&\sum_{j,k=1}^n
(1-x_j^2)(1-x_k^2)
(\partial_{x_j}u_{q,\varepsilon,k})^2
\\
&\le
2|\nabla f|_0^{2q}
\sum_{j,k=1}^n
(1-x_j^2)(1-x_k^2)
(\partial_{x_jx_k}^2f)^2
\\
&+
2\Bigl(\frac q2+2\|\eta'\|_\infty
I_{\{\varepsilon\le |\nabla f|_0^2\le 2\varepsilon\}}\Bigr)^2|\nabla f|_0^{2q-4}
\sum_{j,k=1}^n
(1-x_j^2)(1-x_k^2)
\bigl(\partial_{x_j}(|\nabla f|_0^2)\bigr)^2(\partial_{x_k}f)^2
\\
&=
2|\nabla f|_0^{2q}
\sum_{j,k=1}^n
(1-x_j^2)(1-x_k^2)
(\partial_{x_jx_k}^2f)^2
+
2\Bigl(\frac q2+2\|\eta'\|_\infty
I_{\{\varepsilon\le |\nabla f|_0^2\le 2\varepsilon\}}\Bigr)^2|\nabla f|_0^{2q-2}
\bigl|\nabla |\nabla f|_0^2\bigr|_0^2.
\end{align*}
Thus,
\begin{align*}
\int_{[-1,1]^n}(\delta_0 u_{q,\varepsilon})^2\,dx
&\le
2\int_{[-1,1]^n}|\nabla f|_0^{2q+2}\,dx
+
2\int_{[-1,1]^n}|\nabla f|_0^{2q}
\sum_{j,k=1}^n
(1-x_j^2)(1-x_k^2)
(\partial_{x_jx_k}^2f)^2\,dx
\\
&+
2\int_{[-1,1]^n}
\Bigl(\frac q2+2\|\eta'\|_\infty
I_{\{\varepsilon\le |\nabla f|_0^2\le 2\varepsilon\}}\Bigr)^2|\nabla f|_0^{2q-2}
\bigl|\nabla |\nabla f|_0^2\bigr|_0^2
\,dx.
\end{align*}

Substituting this estimate into
\eqref{eq-lem-2-4-middle}, and using $q\le r$, we obtain
\begin{align*}
&\int_{[-1,1]^n}
\Bigl(
\delta_0 u_{q,\varepsilon}
-
\frac{r-q}{2}
|\nabla f|_0^{q-2}\eta_\varepsilon(|\nabla f|_0^2)
\langle \nabla|\nabla f|_0^2, \nabla f\rangle_0
\Bigr)^2
\,dx
\\
&\le
4\biggl(
\int_{[-1,1]^n}|\nabla f|_0^{2q+2}\,dx
+
\int_{[-1,1]^n}|\nabla f|_0^{2q}
\sum_{j,k=1}^n
(1-x_j^2)(1-x_k^2)
(\partial_{x_jx_k}^2f)^2\,dx
\\
&+\int_{[-1,1]^n}
\Bigl(\frac r2+2\|\eta'\|_\infty
I_{\{\varepsilon\le |\nabla f|_0^2\le 2\varepsilon\}}\Bigr)^2|\nabla f|_0^{2q-2}
\bigl|\nabla |\nabla f|_0^2\bigr|_0^2
\,dx
\biggr).
\end{align*}
Returning to \eqref{eq-CS-est} and letting $\varepsilon\to0$, the dominated
convergence theorem gives the claimed estimate.
\end{proof}

\begin{lemma}\label{lem-2-6}
Let $k\in\mathbb N$, $k\ge 2$, and let $f\in\mathcal P_d(\mathbb R^n)$. Then
\begin{align*}
&\int_{[-1,1]^n}|\nabla f|_0^{2k-2}
\sum_{i,j=1}^n
(1-x_i^2)(1-x_j^2)
(\partial_{x_ix_j}^2f)^2\,dx
+
(k-1)^2
\int_{[-1,1]^n}
|\nabla f|_0^{2k-4}
\bigl|\nabla |\nabla f|_0^2\bigr|_0^2\,dx
\\
&\le
512k^4d^2
\bigl\||\nabla f|_0\bigr\|_{L^{2k}([-1, 1]^n)}^{2k}.
\end{align*}
\end{lemma}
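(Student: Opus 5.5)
The plan is to imitate the even-integer Gaussian argument (Lemma~\ref{lem-1-2} combined with the $L^2$ inequality), now with weighted polynomials. The weighted $L^2$ inequality will come from Lemma~\ref{lem-2-4} applied to Jacobi product measures.

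\emph{Step 1: one Jacobi measure per multi-index.} For a multi-index $J=(j_1,\ldots,j_k)\in\{1,\ldots,n\}^k$ put
\[
R_J:=\partial_{x_{j_1}}f\cdots\partial_{x_{j_k}}f\in\mathcal P_{k(d-1)}(\mathbb R^n),
\qquad
w_J:=\prod_{\ell=1}^k(1-x_{j_\ell}^2)=\prod_{j=1}^n(1-x_j^2)^{m_j(J)} .
\]
Then $|\nabla f|_0^{2k}=\sum_J w_JR_J^2$. Up to a normalizing constant, $w_J\,dx$ is the Jacobi product measure $\mu_\alpha$ with $\alpha_j=m_j(J)$, so $\alpha_*\le k$. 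Lemma~\ref{lem-2-4}, applied to $R_J$ with respect to this measure, gives (the constant cancels)
\[
\int_{[-1,1]^n}w_J|\nabla R_J|_0^2\,dx\le kd(kd+2k+1)\int_{[-1,1]^n}w_JR_J^2\,dx\le 4k^2d^2\int_{[-1,1]^n}w_JR_J^2\,dx .
\]
Summing over $J$, the right-hand side becomes $4k^2d^2\||\nabla f|_0\|_{L^{2k}}^{2k}$.

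\emph{Step 2: weighted analogue of Lemma~\ref{lem-1-2}.} Fix $i$ and expand $\partial_{x_i}R_J$ as in the proof of Lemma~\ref{lem-1-2}, splitting into diagonal and off-diagonal parts. The weights $w_J$ factor across positions, so we get
\[
\sum_J w_J(\partial_{x_i}R_J)^2=k|\nabla f|_0^{2k-2}\sum_j(1-x_j^2)(\partial^2_{x_ix_j}f)^2+k(k-1)|\nabla f|_0^{2k-4}A_i^2,
\qquad A_i:=\sum_j(1-x_j^2)\partial_{x_j}f\,\partial^2_{x_ix_j}f .
\]
Multiplying by $(1-x_i^2)$ and summing over $i$, the first term yields exactly $k$ times the Hessian integrand of the lemma. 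The off-diagonal term yields $k(k-1)|\nabla f|_0^{2k-4}|A|_0^2$.

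\emph{Step 3: relate $A$ to $\nabla|\nabla f|_0^2$.} Here the argument departs from the Gaussian case. One has $\partial_{x_i}|\nabla f|_0^2=2A_i-2x_i(\partial_{x_i}f)^2$, hence
\[
\tfrac14\bigl|\nabla|\nabla f|_0^2\bigr|_0^2\le 2|A|_0^2+2\sum_i(1-x_i^2)(\partial_{x_i}f)^4 .
\]
Since $k\ge2$, we have $(k-1)^2\le k(k-1)$. Therefore the left-hand side of the lemma is bounded by $\frac1k\cdot 8$ times the quantity from Step~2, plus the error term
\[
8(k-1)^2\int|\nabla f|_0^{2k-4}\sum_i(1-x_i^2)(\partial_{x_i}f)^4\,dx .
\]

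\emph{Step 4: the error term, which is the main obstacle.} This integrand has one factor $(1-x_i^2)$ fewer than $|\nabla f|_0^{2k}$ requires. It equals $\sum_{J:\,j_{k-1}=j_k}\bigl(w_J/(1-x_{j_k}^2)\bigr)R_J^2$. For each such $J$, I apply \eqref{eq-cor} with $m=1$ in the single variable $x_{j_k}$ (with Fubini in the other variables). The degree in that variable is at most $k(d-1)$ and $r\le k$, so one weight factor is recovered at cost $64(kd+k+1)^2\le 64\cdot 4k^2d^2$. Summing over these $J$, which form a subset of all multi-indices, bounds the error by $C k^2\cdot k^2d^2\||\nabla f|_0\|_{L^{2k}}^{2k}$.

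Combining Steps 1--4 gives the claimed $O(k^4d^2)$ bound. The remaining work is only to track constants and check that they fit under $512k^4d^2$.
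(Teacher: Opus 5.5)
Your proof is correct. Steps 1--3 coincide with the paper's argument: Lemma~\ref{lem-2-4} for the Jacobi measure $w_J\,dx$ attached to each multi-index, the weighted version of Lemma~\ref{lem-1-2}, and the splitting $\partial_{x_i}|\nabla f|_0^2=2A_i-2x_i(\partial_{x_i}f)^2$.

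The two routes differ only in how the error term is handled.
\begin{itemize}
\item The paper coarsens the error term to $|\nabla f|_0^{2k-2}|\nabla f|^2$ and applies H\"older to get $\||\nabla f|_0\|_{L^{2k}}^{2k-2}\|\nabla f\|_{L^{2k}}^{2}$. It then invokes Lemma~\ref{lem-2-2}, which restores all $k$ weights by iterating \eqref{eq-cor}; after taking roots this costs a factor $(8kd)^2$.
\item You keep the finer structure $\sum_{J:\,j_{k-1}=j_k}(w_J/(1-x_{j_k}^2))R_J^2$, in which exactly one weight factor is missing for each $J$. You restore that factor with a single one-variable application of \eqref{eq-cor}.
\end{itemize}
Your version is more local and needs neither H\"older nor Lemma~\ref{lem-2-2} here. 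The paper's version reuses Lemma~\ref{lem-2-2}, which it needs anyway to pass to the unweighted gradient in Theorem~\ref{th-2}. Tracked carefully, both give the same error contribution $512k^2(k-1)^2d^2$.

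Two of your intermediate constants must be corrected for the constant $512$ to come out.
\begin{itemize}
\item The factor in front of the Step~2 quantity is $8$, not $8/k$. The inequality $(k-1)^2\le k(k-1)$ gives $8(k-1)^2\le 8\cdot k(k-1)$, which is $8$ times the off-diagonal coefficient. The factor $8/k$ would require $(k-1)^2\le k-1$, which fails for $k\ge3$.
\item The bound $64(kd+k+1)^2\le 256k^2d^2$ fails for $d=1$ (for instance $25>16$ when $k=2$). Even where it holds, it gives an error term $2048k^2(k-1)^2d^2\ge 512k^4d^2$, which leaves no room for the main term.
\end{itemize}
For the second point, use instead that $R_J$ has degree at most $k(d-1)$ in $x_{j_k}$, and that the exponent is $r=m_{j_k}(J)-1\le k-1$. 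Then $D+r+1\le kd$, so the cost is $64k^2d^2$. The total is then at most $32k^2d^2+512k^2(k-1)^2d^2\le 512k^4d^2$.
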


\begin{proof}
For
$(j_1,\ldots,j_k)\in\{1,\ldots,n\}^k$, set
\[
Q_{j_1,\ldots,j_k}:=
\partial_{x_{j_1}}f\cdot\ldots\cdot\partial_{x_{j_k}}f
\in\mathcal P_{k(d-1)}(\mathbb R^n).
\]
For $j\in\{1,\ldots,n\}$, let $m_j=m_j((j_1,\ldots,j_k))$ be the number of occurrences of
$j$ in $(j_1,\ldots,j_k)$. Then
\[
\sum_{j=1}^n m_j=k
\quad\text{and}\quad
\prod_{\ell=1}^k(1-x_{j_\ell}^2) = \prod_{j=1}^n(1-x_j^2)^{m_j}.
\]
By Lemma \ref{lem-2-4},
applied to the polynomial $Q_{j_1,\ldots,j_k}$
and the measure $\mu_{\bf m}$, ${\bf m}=(m_1, \ldots, m_n)$, we obtain
\begin{align*}
&\int_{[-1,1]^n}
\prod_{\ell=1}^k(1-x_{j_\ell}^2)
\sum_{i=1}^n(1-x_i^2)(\partial_{x_i}Q_{j_1,\ldots,j_k})^2\,dx
\\
&\le
k(d-1)\bigl(k(d-1) + 2\max_{1\le j\le n}m_j+1\bigr)
\int_{[-1,1]^n}
\prod_{\ell=1}^k(1-x_{j_\ell}^2)Q_{j_1,\ldots,j_k}^2\,dx
\\
&\le
2k^2d^2
\int_{[-1,1]^n}
\prod_{\ell=1}^k(1-x_{j_\ell}^2)Q_{j_1,\ldots,j_k}^2\,dx.
\end{align*}
Summing over all $(j_1,\ldots,j_k)$, we obtain
\begin{equation}\label{eq-MB-weighted}
\sum_{j_1,\ldots,j_k=1}^n
\int_{[-1,1]^n}
\prod_{\ell=1}^k(1-x_{j_\ell}^2)
\sum_{i=1}^n(1-x_i^2)
\bigl(\partial_{x_i}Q_{j_1,\ldots,j_k}\bigr)^2\,dx
\le
2k^2d^2
\int_{[-1,1]^n}|\nabla f|_0^{2k}\,dx.
\end{equation}

Now we compute the left hand side.
For fixed $i$, we have
\[
\partial_{x_i}Q_{j_1,\ldots,j_k}
=
\sum_{s=1}^k
\partial_{x_i x_{j_s}}^2f
\prod_{\substack{\ell=1\\ \ell\ne s}}^k
\partial_{x_{j_\ell}}f.
\]
Hence
\[
\bigl(\partial_{x_i}Q_{j_1,\ldots,j_k}\bigr)^2
=
\sum_{s=1}^k
\bigl(\partial_{x_i x_{j_s}}^2f\bigr)^2
\prod_{\substack{\ell=1\\ \ell\ne s}}^k
\bigl(\partial_{x_{j_\ell}}f\bigr)^2
+
\sum_{\substack{s,t=1\\ s\ne t}}^k
\partial_{x_i x_{j_s}}^2f\,
\partial_{x_i x_{j_t}}^2f
\partial_{x_{j_s}}f\,\partial_{x_{j_t}}f
\prod_{\substack{\ell=1\\ \ell\ne s,t}}^k
\bigl(\partial_{x_{j_\ell}}f\bigr)^2.
\]
First, the diagonal part gives
\begin{align*}
&\sum_{j_1,\ldots,j_k=1}^n
\prod_{\ell=1}^k(1-x_{j_\ell}^2)
\sum_{s=1}^k
\bigl(\partial_{x_i x_{j_s}}^2f\bigr)^2
\prod_{\substack{\ell=1\\ \ell\ne s}}^k
\bigl(\partial_{x_{j_\ell}}f\bigr)^2
\\
&=
k
\biggl(\sum_{j=1}^n
(1-x_j^2)
\bigl(\partial_{x_i x_j}^2f\bigr)^2\biggr)
\biggl(
\sum_{m=1}^n
(1-x_m^2)(\partial_{x_m}f)^2
\biggr)^{k-1}
\\
&=
k|\nabla f|_0^{2k-2}
\sum_{j=1}^n
(1-x_j^2)
\bigl(\partial_{x_i x_j}^2f\bigr)^2 .
\end{align*}
Second, the off-diagonal part gives
\begin{align*}
&\sum_{j_1,\ldots,j_k=1}^n
\prod_{\ell=1}^k(1-x_{j_\ell}^2)
\sum_{\substack{s,t=1\\ s\ne t}}^k
\partial_{x_i x_{j_s}}^2f\,
\partial_{x_i x_{j_t}}^2f
\partial_{x_{j_s}}f\,\partial_{x_{j_t}}f
\prod_{\substack{\ell=1\\ \ell\ne s,t}}^k
\bigl(\partial_{x_{j_\ell}}f\bigr)^2
\\
&=
k(k-1)
\biggl(
\sum_{j=1}^n
(1-x_j^2)\partial_{x_j}f\,\partial_{x_i x_j}^2f
\biggr)^2
\biggl(
\sum_{m=1}^n
(1-x_m^2)(\partial_{x_m}f)^2
\biggr)^{k-2}
\\
&=
k(k-1)|\nabla f|_0^{2k-4}
\biggl(
\sum_{j=1}^n
(1-x_j^2)\partial_{x_j}f\,\partial_{x_i x_j}^2f
\biggr)^2.
\end{align*}
Therefore, for every fixed $i$,
\begin{align*}
&\sum_{j_1,\ldots,j_k=1}^n
\prod_{\ell=1}^k(1-x_{j_\ell}^2)
\bigl(\partial_{x_i}Q_{j_1,\ldots,j_k}\bigr)^2
\\
&=
k|\nabla f|_0^{2k-2}
\sum_{j=1}^n
(1-x_j^2)
\bigl(\partial_{x_i x_j}^2f\bigr)^2
+
k(k-1)|\nabla f|_0^{2k-4}
\biggl(
\sum_{j=1}^n
(1-x_j^2)\partial_{x_j}f\,\partial_{x_i x_j}^2f
\biggr)^2.
\end{align*}
Multiplying by $1-x_i^2$ and summing over $i=1,\ldots,n$,
we obtain
\begin{align*}
&\sum_{j_1,\ldots,j_k=1}^n
\prod_{\ell=1}^k(1-x_{j_\ell}^2)
\sum_{i=1}^n(1-x_i^2)
\bigl(\partial_{x_i}Q_{j_1,\ldots,j_k}\bigr)^2
\\
&=
k|\nabla f|_0^{2k-2}
\sum_{i,j=1}^n
(1-x_i^2)(1-x_j^2)
(\partial_{x_i x_j}^2f)^2
\\
&+
k(k-1)|\nabla f|_0^{2k-4}
\sum_{i=1}^n(1-x_i^2)
\biggl(
\sum_{j=1}^n
(1-x_j^2)\partial_{x_j}f\,\partial_{x_i x_j}^2f
\biggr)^2.
\end{align*}

We now estimate the term with
$\nabla|\nabla f|_0^2$. Since
\[
|\nabla f|_0^2
=
\sum_{j=1}^n(1-x_j^2)(\partial_{x_j}f)^2,
\]
we have
\[
\partial_{x_i}|\nabla f|_0^2
=
-2x_i(\partial_{x_i}f)^2
+
2\sum_{j=1}^n
(1-x_j^2)\partial_{x_j}f\,\partial_{x_ix_j}^2f .
\]
Hence, on the cube $[-1, 1]^n$,
\begin{align*}
\bigl|\nabla|\nabla f|_0^2\bigr|_0^2
&=
\sum_{i=1}^n(1-x_i^2)
\bigl(\partial_{x_i}|\nabla f|_0^2\bigr)^2
\\
&\le
8|\nabla f|_0^2|\nabla f|^2
+
8\sum_{i=1}^n(1-x_i^2)
\Bigl(
\sum_{j=1}^n
(1-x_j^2)\partial_{x_j}f\,\partial_{x_ix_j}^2f
\Bigr)^2.
\end{align*}
Multiplying by $|\nabla f|_0^{2k-4}$ and integrating, we get
\begin{align*}
&\int_{[-1,1]^n}
|\nabla f|_0^{2k-4}
\bigl|\nabla|\nabla f|_0^2\bigr|_0^2\,dx
\\
&\le
8\int_{[-1,1]^n}
|\nabla f|_0^{2k-2}|\nabla f|^2\,dx
+
8\int_{[-1,1]^n}
|\nabla f|_0^{2k-4}
\sum_{i=1}^n(1-x_i^2)
\Bigl(
\sum_{j=1}^n
(1-x_j^2)\partial_{x_j}f\,\partial_{x_ix_j}^2f
\Bigr)^2 dx.
\end{align*}
Therefore, applying \eqref{eq-MB-weighted}, we obtain
\begin{align*}
&\int_{[-1,1]^n}|\nabla f|_0^{2k-2}
\sum_{i,j=1}^n
(1-x_i^2)(1-x_j^2)
(\partial_{x_ix_j}^2f)^2\,dx
+
(k-1)^2
\int_{[-1,1]^n}
|\nabla f|_0^{2k-4}
\bigl|\nabla|\nabla f|_0^2\bigr|_0^2\,dx
\\
&\le
8\int_{[-1,1]^n}
\sum_{j_1,\ldots,j_k=1}^n
\prod_{\ell=1}^k(1-x_{j_\ell}^2)
\sum_{i=1}^n(1-x_i^2)
\bigl(\partial_{x_i}Q_{j_1,\ldots,j_k}\bigr)^2\, dx
\\
&\qquad\qquad\qquad\qquad\qquad\qquad\qquad\qquad\qquad\qquad\qquad+ 8(k-1)^2\int_{[-1,1]^n}
|\nabla f|_0^{2k-2}|\nabla f|^2\,dx
\\
&\le 
16k^2d^2\bigl\||\nabla f|_0\bigr\|_{L^{2k}([-1, 1]^n)}^{2k} + 8(k-1)^2
\bigl\||\nabla f|_0\bigr\|_{L^{2k}([-1, 1]^n)}^{2k-2}\|\nabla f\|_{L^{2k}([-1, 1]^n)}^{2}.
\end{align*}
Finally, by Lemma \ref{lem-2-2}
\[
\|\nabla f\|_{L^{2k}([-1,1]^n)}
\le
8kd\,
\||\nabla f|_0\|_{L^{2k}([-1,1]^n)},
\]
which implies the announced estimate since 
\[
16k^2d^2 + 512k^2d^2(k-1)^2\le 512k^4d^2.
\]
The lemma is proved.
\end{proof}

\subsection{Proof of Theorem~\ref{th-2}}

The case $k=1$ follows from Lemma~\ref{lem-2-4} with
$\alpha_1=\ldots=\alpha_n=0$:
\[
\bigl\||\nabla f|_0\bigr\|_{L^2([-1,1]^n)}
\le
\sqrt{d(d+1)}\,\|f\|_{L^2([-1,1]^n)}
\le \sqrt{2}d\,\|f\|_{L^2([-1,1]^n)}.
\]
By Lemma \ref{lem-2-2}, we also have
\[
\|\nabla f\|_{L^2([-1, 1]^n)}\le 8\sqrt{2}d^2\,\|f\|_{L^2([-1,1]^n)}\le 12d^2\,\|f\|_{L^2([-1,1]^n)}.
\]

Let now $k\ge2$. 
We first prove the estimate for the weighted gradient.

We apply Lemma~\ref{lem-2-5} with
\[
r=2k-2,
\quad
q=k-1.
\]
Then $r\ge q\ge1$, and we obtain
\begin{align*}
&\bigl\||\nabla f|_0\bigr\|_{L^{2k}([-1,1]^n)}^{2k}
\le
2\bigl\|f|\nabla f|_0^{k-1}\bigr\|_{L^2([-1, 1]^n)}
\biggl(
\bigl\||\nabla f|_0\bigr\|_{L^{2k}([-1,1]^n)}^{2k}
\\
&\qquad\qquad\qquad\qquad\qquad\qquad\qquad\qquad\qquad+
\int_{[-1,1]^n}|\nabla f|_0^{2k-2}
\sum_{j,k=1}^n
(1-x_j^2)(1-x_k^2)
(\partial_{x_jx_k}^2f)^2\,dx
\\
&\qquad\qquad\qquad\qquad\qquad\qquad\qquad\qquad\qquad+
(k-1)^2\int_{[-1,1]^n}
|\nabla f|_0^{2k-4}
\bigl|\nabla |\nabla f|_0^2\bigr|_0^2
\,dx
\biggr)^{1/2}.
\end{align*}
By Lemma~\ref{lem-2-6},
\begin{align*}
&\int_{[-1,1]^n}|\nabla f|_0^{2k-2}
\sum_{i,j=1}^n
(1-x_i^2)(1-x_j^2)
(\partial_{x_ix_j}^2f)^2\,dx
+
(k-1)^2
\int_{[-1,1]^n}
|\nabla f|_0^{2k-4}
\bigl|\nabla|\nabla f|_0^2\bigr|_0^2
\,dx
\\
&\le
512k^4d^2
\bigl\||\nabla f|_0\bigr\|_{L^{2k}([-1,1]^n)}^{2k}.
\end{align*}
Therefore,
\[
\int_{[-1,1]^n}|\nabla f|_0^{2k}\,dx
\le
2\bigl\|f|\nabla f|_0^{k-1}\bigr\|_{L^2([-1,1]^n)}
\Bigl(
(1+512k^4d^2)\bigl\||\nabla f|_0\bigr\|_{L^{2k}([-1,1]^n)}^{2k}
\Bigr)^{1/2}.
\]
Since $k\ge2$ and $d\ge1$,
\[
1+512k^4d^2\le 23^2k^4d^2.
\]
Hence
\[
\int_{[-1,1]^n}|\nabla f|_0^{2k}\,dx
\le
46k^2d\,
\bigl\|f|\nabla f|_0^{k-1}\bigr\|_{L^2([-1,1]^n)}
\bigl\||\nabla f|_0\bigr\|_{L^{2k}([-1,1]^n)}^k.
\]
By H\"older's inequality,
\[
\bigl\|f|\nabla f|_0^{k-1}\bigr\|_{L^2([-1,1]^n)}
\le
\|f\|_{L^{2k}([-1,1]^n)}
\bigl\||\nabla f|_0\bigr\|_{L^{2k}([-1,1]^n)}^{k-1}.
\]
Thus,
\[
\bigl\||\nabla f|_0\bigr\|_{L^{2k}([-1,1]^n)}^{2k}
\le
46k^2d\,
\|f\|_{L^{2k}([-1,1]^n)}
\bigl\||\nabla f|_0\bigr\|_{L^{2k}([-1,1]^n)}^{2k-1},
\]
which implies
\[
\bigl\||\nabla f|_0\bigr\|_{L^{2k}([-1,1]^n)}
\le
46k^2d\,\|f\|_{L^{2k}([-1,1]^n)}.
\]
Finally, Lemma~\ref{lem-2-2} gives
\[
\|\nabla f\|_{L^{2k}([-1,1]^n)}
\le
8kd\,
\bigl\||\nabla f|_0\bigr\|_{L^{2k}([-1,1]^n)}
\le
368k^3d^2\,\|f\|_{L^{2k}([-1,1]^n)}.
\]
This concludes the proof.
\qed

\section{Unimodal densities}

\begin{lemma}\label{lem-3-1}
Let $n, d\in \mathbb{N}$ and $p\ge 1$. Assume that
\begin{equation}\label{eq-cube-MB}
\|\nabla g\|_{L^p([-1,1]^n)}
\le
C(p)d^2\|g\|_{L^p([-1,1]^n)}
\quad \forall g\in\mathcal P_d(\mathbb R^n).
\end{equation}
Then, for every $a_j<b_j$, $j=1,\ldots,n$, and every
$f\in\mathcal P_d(\mathbb R^n)$,
\[
\int_{[a_1,b_1]\times\ldots\times[a_n,b_n]}
\Bigl(
\sum_{j=1}^n
(b_j-a_j)^2
\bigl(\partial_{x_j} f(x)\bigr)^2
\Bigr)^{p/2}
\,dx
\le
\bigl(2C(p)d^2\bigr)^p
\int_{[a_1,b_1]\times\ldots\times[a_n,b_n]} |f(x)|^p\,dx.
\]
\end{lemma}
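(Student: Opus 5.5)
This follows from \eqref{eq-cube-MB} by an affine change of variables mapping the cube onto the box. The key point is that affine maps preserve $\mathcal P_d(\mathbb R^n)$. Set $c_j:=(a_j+b_j)/2$ and $h_j:=(b_j-a_j)/2$, and define
\[
g(y):=f(c_1+h_1y_1,\ldots,c_n+h_ny_n),\quad y\in\mathbb R^n .
\]
Then $g\in\mathcal P_d(\mathbb R^n)$, and by the chain rule
\[
\partial_{y_j}g(y)=h_j\,(\partial_{x_j}f)(x)=\tfrac12(b_j-a_j)(\partial_{x_j}f)(x),
\qquad x=c+hy .
\]
Hence, pointwise,
\[
|\nabla g(y)|^2=\tfrac14\sum_{j=1}^n(b_j-a_j)^2(\partial_{x_j}f(x))^2 .
\]

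Next we apply \eqref{eq-cube-MB} to $g$ and raise both sides to the power $p$:
\[
\int_{[-1,1]^n}|\nabla g|^p\,dy\le \bigl(C(p)d^2\bigr)^p\int_{[-1,1]^n}|g|^p\,dy .
\]
We then change variables $x=c+hy$ in both integrals. The map carries $[-1,1]^n$ onto $[a_1,b_1]\times\ldots\times[a_n,b_n]$ with constant Jacobian $dx=\prod_j h_j\,dy$. This common factor appears on both sides and cancels.

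Substituting the pointwise identity for $|\nabla g|^2$, the left-hand side becomes
\[
2^{-p}\prod_j h_j^{-1}\int_{\prod[a_j,b_j]}\Bigl(\sum_j(b_j-a_j)^2(\partial_{x_j}f)^2\Bigr)^{p/2}dx,
\]
while the right-hand side becomes $\bigl(C(p)d^2\bigr)^p\prod_j h_j^{-1}\int|f|^p\,dx$. Multiplying through by $2^p\prod_j h_j$ gives the stated inequality with constant $\bigl(2C(p)d^2\bigr)^p$.

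There is no real obstacle here. The only care needed is bookkeeping: the factor $\tfrac12$ from $h_j=(b_j-a_j)/2$ produces the $2^p$, and the Jacobian factors on the two sides cancel exactly.
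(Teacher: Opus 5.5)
Your proposal is correct and matches the paper's proof: the same affine map $y\mapsto c+hy$ from $[-1,1]^n$ onto the box, the same chain-rule identity $|\nabla g|^2=\tfrac14\sum_j(b_j-a_j)^2(\partial_{x_j}f)^2$, and a change of variables in which the Jacobian cancels. Your bookkeeping of the factor $2^p$ is also right.
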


\begin{proof}
Let
\[
L(y)
=
\Bigl(
\frac{b_1-a_1}{2}y_1+\frac{a_1+b_1}{2},
\ldots,
\frac{b_n-a_n}{2}y_n+\frac{a_n+b_n}{2}
\Bigr)
\]
and set
\[
g(y):=f(L(y)).
\]
Then $g\in\mathcal P_d(\mathbb R^n)$ and
\[
\partial_{y_j}g(y)
=
\frac{b_j-a_j}{2}\,
\partial_{x_j}f(L(y)).
\]
Hence
\[
|\nabla g(y)|^2
=
\frac14
\sum_{j=1}^n
(b_j-a_j)^2
\bigl(\partial_{x_j}f(L(y))\bigr)^2.
\]
Applying \eqref{eq-cube-MB} to $g$, we get
\[
\int_{[-1,1]^n}
\Bigl(
\frac14
\sum_{j=1}^n
(b_j-a_j)^2
\bigl(\partial_{x_j}f(L(y))\bigr)^2
\Bigr)^{p/2}
\,dy
\le
\bigl(C(p) d^2\bigr)^p
\int_{[-1,1]^n}|f(L(y))|^p\,dy.
\]
Changing variables $x=L(y)$ on both sides gives
the claimed estimate.
\end{proof}

\begin{lemma}\label{lem-3-2}
Let $n\in\mathbb N$, let $\varrho_1,\ldots,\varrho_n$ be bounded probability
densities on $\mathbb R$, and set
\[
M:=\max_{1\le j\le n}\|\varrho_j\|_\infty.
\]
For $t_j\ge0$, set
\[
E_j(t_j):=\{s\in\mathbb R\colon \varrho_j(s)\ge t_j\},
\quad
L_j(t_j):=\lambda(E_j(t_j)),
\]
where $\lambda$ is the Lebesgue measure on $\mathbb R$.
Then, for every $p\ge2$ and every Borel vector field
$\eta=(\eta_1,\ldots,\eta_n)\colon\mathbb R^n\to[0,+\infty)^n$, one has
\begin{equation}\label{eq-cake}
\int_{[0,\infty)^n}
\int_{E_1(t_1)\times\cdots\times E_n(t_n)}
\Bigl(\sum_{j=1}^n L_j(t_j)^2 \eta_j(x)\Bigr)^{p/2}
\,dx\,dt
\ge
\frac{1}{M^p}
\int_{\mathbb R^n}
\Bigl(\sum_{j=1}^n\eta_j(x)\Bigr)^{p/2}
\prod_{i=1}^n \varrho_i(x_i)\,dx.
\end{equation}
\end{lemma}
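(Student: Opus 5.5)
Fubini reduces the left-hand side to a pointwise statement in $x$, and the key tool is then the concavity of $s\mapsto s^{2/p}$ (Minkowski's inequality). Write the left side of \eqref{eq-cake} as
\[
\int_{\mathbb R^n}\int_{[0,\infty)^n}\prod_{i=1}^n I_{\{\varrho_i(x_i)\ge t_i\}}\Bigl(\sum_{j=1}^n L_j(t_j)^2\eta_j(x)\Bigr)^{p/2}dt\,dx .
\]
It suffices to show, for each fixed $x$ with $\prod_i\varrho_i(x_i)>0$,
\[
\int_{R(x)}\Bigl(\sum_{j} L_j(t_j)^2\eta_j\Bigr)^{p/2}dt\ \ge\ M^{-p}\Bigl(\sum_j\eta_j\Bigr)^{p/2}\prod_i\varrho_i(x_i),
\qquad R(x):=\prod_{i=1}^n[0,\varrho_i(x_i)],
\]
where $\eta_j=\eta_j(x)$ are now fixed nonnegative numbers.

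First, normalize by the volume $|R(x)|=\prod_i\varrho_i(x_i)$ and consider the probability measure $\nu_x$ given by the uniform distribution on $R(x)$. Since $p/2\ge1$, Jensen's inequality applied to the convex map $s\mapsto s^{p/2}$ gives
\[
\frac{1}{|R(x)|}\int_{R(x)}\Bigl(\sum_j L_j(t_j)^2\eta_j\Bigr)^{p/2}dt\ \ge\ \Bigl(\sum_j\eta_j\,\mathbb E_{\nu_x}L_j(t_j)^2\Bigr)^{p/2}.
\]
Under $\nu_x$ each $t_j$ is uniform on $[0,\varrho_j(x_j)]$, so it remains to bound $\frac1{\varrho_j(x_j)}\int_0^{\varrho_j(x_j)}L_j(t)^2dt$ from below by $M^{-2}$.

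For this lower bound, note that $\int_0^\infty L_j(t)\,dt=\int\varrho_j=1$ by the layer-cake formula. We also have $L_j(t)\ge0$, and $L_j(t)=0$ for $t>M$. The function $L_j$ is nonincreasing, so its average over $[0,\varrho_j(x_j)]$ is at least its average over $[0,M]$, since $\varrho_j(x_j)\le M$. That average equals $\frac1M\int_0^M L_j=\frac1M$. Cauchy--Schwarz (or Jensen again) on $[0,\varrho_j(x_j)]$ then gives
\[
\frac1{\varrho_j(x_j)}\int_0^{\varrho_j(x_j)}L_j^2\ \ge\ \Bigl(\frac1{\varrho_j(x_j)}\int_0^{\varrho_j(x_j)}L_j\Bigr)^2\ \ge\ M^{-2}.
\]
Plugging this in yields $(\sum_j\eta_j)^{p/2}M^{-p}|R(x)|$, which is the claimed pointwise bound; integrating over $x$ finishes the proof.

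The only delicate point is the monotonicity argument comparing averages of the nonincreasing $L_j$ over $[0,\varrho_j(x_j)]\subset[0,M]$. Measurability issues for Fubini (with $\eta$ Borel and $L_j$ monotone, so all integrands are measurable) are routine. Points where some $\varrho_i(x_i)=0$ contribute zero to the right-hand side, so they can be ignored.
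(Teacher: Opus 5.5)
Your proof is correct and follows the paper's argument essentially step by step: Tonelli reduces the claim to a pointwise bound on the box $\prod_i[0,\varrho_i(x_i)]$, Jensen's inequality for the convex map $s\mapsto s^{p/2}$ handles the power, and the bound $\frac{1}{\tau}\int_0^\tau L_j^2\ge M^{-2}$ comes from monotonicity of averages of the nonincreasing $L_j$, the layer-cake identity $\int_0^\infty L_j=1$, and Cauchy--Schwarz. The only differences are cosmetic: the paper compares with the average over $[0,M_j]$, $M_j=\|\varrho_j\|_\infty$, and then uses $M_j\le M$, whereas you compare directly with $[0,M]$; and your opening reference to Minkowski's inequality and concavity of $s\mapsto s^{2/p}$ is a misnomer, since the argument you actually give uses only Jensen's inequality.
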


\begin{proof}
For each $j$, let
\[
M_j:=\|\varrho_j\|_\infty.
\]
By Fubini's theorem,
\[
\int_0^{M_j} L_j(t_j)\,dt_j
=
\int_{\mathbb R}\varrho_j(s)\,ds
=
1.
\]
Since $L_j$ is nonincreasing on $[0,M]$, for every $0<\tau<M_j$ we have
\[
\frac1\tau\int_0^\tau L_j(t_j)\,dt_j
\ge
L(\tau)
\ge
\frac{1}{M_j-\tau}\int_\tau^{M_j} L(t_j)\,dt_j.
\]
Therefore, for $0<\tau<M_j$,
\begin{align*}
\frac1\tau\int_0^\tau L(t_j)\,dt_j
&\ge
\frac{\tau}{M_j}\cdot\frac1\tau\int_0^\tau L(t_j)\,dt_j
+
\frac{M_j-\tau}{M_j}\cdot\frac{1}{M_j-\tau}\int_\tau^{M_j} L(t_j)\,dt_j
\\
&=
\frac{1}{M_j}\int_0^{M_j} L(t_j)\,dt_j
=
\frac{1}{M_j}.
\end{align*}
Hence, by the Cauchy--Schwarz inequality,
for every $0<\tau\le M_j$, we have
\[
\int_0^\tau L_j(t_j)^2\,dt_j
\ge
\tau
\biggl(\frac1\tau\int_0^\tau L_j(t_j)\,dt_j\biggr)^2
\ge
\frac{\tau}{M_j^2}.
\]

By Tonelli's theorem, the left-hand side of \eqref{eq-cake} equals
\[
\int_{\mathbb R^n}
\int_0^{\varrho_1(x_1)}\ldots\int_0^{\varrho_n(x_n)}
\Bigl(\sum_{j=1}^n L_j(t_j)^2\eta_j(x)\Bigr)^{p/2}
\,dt\,dx.
\]
Fix $x\in\mathbb R^n$ and assume first that
$\varrho_j(x_j)>0$ for all $j\in\{1,\ldots,n\}$. Since $p/2\ge1$, Jensen's
inequality gives
\begin{align*}
&\int_0^{\varrho_1(x_1)}\ldots\int_0^{\varrho_n(x_n)}
\Bigl(\sum_{j=1}^n L_j(t_j)^2\eta_j(x)\Bigr)^{p/2}
\,dt_1\ldots dt_n
\\
&\ge
\prod_{i=1}^n\varrho_i(x_i)
\biggl(
\frac{1}{\prod_{i=1}^n\varrho_i(x_i)}
\int_0^{\varrho_1(x_1)}\ldots\int_0^{\varrho_n(x_n)}
\sum_{j=1}^n L_j(t_j)^2\eta_j(x)
\,dt_1\ldots dt_n
\biggr)^{p/2}
\\
&=
\prod_{i=1}^n\varrho_i(x_i)
\biggl(
\sum_{j=1}^n
\eta_j(x)
\frac{1}{\varrho_j(x_j)}
\int_0^{\varrho_j(x_j)}L_j(t_j)^2\,dt_j
\biggr)^{p/2}
\\
&\ge
\prod_{i=1}^n\varrho_i(x_i)
\Bigl(\sum_{j=1}^n M_j^{-2}\eta_j(x)\Bigr)^{p/2}
\ge
\frac{1}{M^p}
\prod_{i=1}^n\varrho_i(x_i)
\Bigl(\sum_{j=1}^n\eta_j(x)\Bigr)^{p/2}.
\end{align*}
If $\varrho_j(x_j)=0$ for at least one $j$, then both sides of the pointwise
estimate above vanish, and the estimate is trivial.
Integrating over $x$ gives \eqref{eq-cake}.
\end{proof}

\begin{theorem}\label{th-3}
Let $n,d\in \mathbb N$ and $p\ge2$. Assume that
\[
\|\nabla g\|_{L^p([-1,1]^n)}
\le
C(p)d^2\|g\|_{L^p([-1,1]^n)}
\quad \forall g\in\mathcal P_d(\mathbb R^n).
\]
Let
$\mu=\mu_1\otimes\cdots\otimes\mu_n$,
where each $\mu_j$ has a bounded unimodal density $\varrho_j$ on $\mathbb R$.
Set
\[
M:=\max_{1\le j\le n}\|\varrho_j\|_\infty.
\]
Then, for every $f\in\mathcal P_d(\mathbb R^n)$, one has
\[
\|\nabla f\|_{L^p(\mu)}
\le
2C(p)Md^2
\|f\|_{L^p(\mu)}.
\]
\end{theorem}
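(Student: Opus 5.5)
The plan is to write $\mu$ as a mixture of uniform distributions on boxes via the layer-cake representation. We then apply the rescaled cube inequality of Lemma~\ref{lem-3-1} on each box and recombine the pieces with Lemma~\ref{lem-3-2}.

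Since $\varrho_j$ is unimodal, each superlevel set $E_j(t_j)=\{\varrho_j\ge t_j\}$ with $0<t_j<\|\varrho_j\|_\infty$ is an interval of length $L_j(t_j)$. Its closure is $[a_j(t_j),b_j(t_j)]$, with $b_j-a_j=L_j(t_j)$. Endpoints are a null set, and levels with $L_j(t_j)=0$ contribute nothing, so both can be ignored. We may also assume $L_j(t_j)<\infty$ for $t_j>0$, which holds because $\int L_j=1$.

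Apply Lemma~\ref{lem-3-1} on the box $B(t)=E_1(t_1)\times\cdots\times E_n(t_n)$ and integrate over $t\in[0,\infty)^n$. This gives
\[
\int_{[0,\infty)^n}\int_{B(t)}\Bigl(\sum_{j=1}^n L_j(t_j)^2(\partial_{x_j}f)^2\Bigr)^{p/2}dx\,dt
\le (2C(p)d^2)^p\int_{[0,\infty)^n}\int_{B(t)}|f|^p\,dx\,dt .
\]
By Tonelli's theorem, the right-hand side equals $(2C(p)d^2)^p\int|f|^p\prod_i\varrho_i(x_i)\,dx=(2C(p)d^2)^p\|f\|_{L^p(\mu)}^p$, since $\int_0^\infty I_{\{\varrho_i(x_i)\ge t_i\}}dt_i=\varrho_i(x_i)$.

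For the left-hand side, invoke Lemma~\ref{lem-3-2} with $\eta_j=(\partial_{x_j}f)^2\ge0$. It bounds the left-hand side from below by $M^{-p}\int|\nabla f|^p\,d\mu$.

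Combining the two bounds gives $\|\nabla f\|_{L^p(\mu)}^p\le (2C(p)Md^2)^p\|f\|_{L^p(\mu)}^p$, and taking $p$-th roots yields the claim. The hypothesis $p\ge2$ is exactly what Lemma~\ref{lem-3-2} needs for Jensen's inequality.

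The main point requiring care is that unimodality is what makes each $E_j(t_j)$ a genuine interval, so that Lemma~\ref{lem-3-1} applies. One must also check the measure-theoretic details: measurability in $t$, and the treatment of degenerate levels, namely zero-length intervals and $t_j\ge\|\varrho_j\|_\infty$, where $E_j$ is a null set. The rest is bookkeeping.
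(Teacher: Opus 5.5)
Your proposal is correct and follows the paper's proof essentially step by step. You use the layer-cake decomposition into boxes $E_1(t_1)\times\cdots\times E_n(t_n)$ of superlevel sets, which are intervals by unimodality, and apply Lemma~\ref{lem-3-1} on each box. Tonelli's theorem then recovers $\|f\|_{L^p(\mu)}^p$ on the right. Lemma~\ref{lem-3-2} with $\eta_j=(\partial_{x_j}f)^2$ bounds the left side from below by $M^{-p}\|\nabla f\|_{L^p(\mu)}^p$. Your treatment of degenerate levels also matches the paper, including finiteness of $L_j(t_j)$ for $t_j>0$ via monotonicity and $\int_0^\infty L_j=1$; the paper handles these in a single sentence.
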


\begin{proof}
For $t_j\ge0$, set
\[
E_j(t_j):=\{x\in\mathbb R\colon \varrho_j(x)\ge t_j\},
\quad
L_j(t_j):=\lambda(E_j(t_j)).
\]
Since $\varrho_j$ is unimodal, each nonempty set $E_j(t_j)$ is an interval.
Moreover, for $t_j>0$ it has finite length, and its endpoints do not affect
the integrals below. Therefore, applying Lemma~\ref{lem-3-1} to the rectangle
\[
E_1(t_1)\times\ldots\times E_n(t_n),
\]
for $t_1,\ldots,t_n>0$, we obtain
\[
\int_{E_1(t_1)\times\cdots\times E_n(t_n)}
\Bigl(
\sum_{j=1}^n
L_j(t_j)^2
\bigl(\partial_{x_j}f(x)\bigr)^2
\Bigr)^{p/2}
\,dx
\le
(2C(p)d^2)^p
\int_{E_1(t_1)\times\cdots\times E_n(t_n)}
|f(x)|^p\,dx.
\]
Integrating this inequality over $t=(t_1,\ldots,t_n)\in[0,\infty)^n$ and
using Tonelli's theorem, the right-hand side becomes
\[
(2C(p)d^2)^p
\int_{\mathbb R^n}|f(x)|^p
\prod_{j=1}^n\varrho_j(x_j)\,dx
=
(2C(p)d^2)^p
\|f\|_{L^p(\mu)}^p.
\]
By Lemma~\ref{lem-3-2}, applied to
\[
\eta_j(x):=\bigl(\partial_{x_j}f(x)\bigr)^2,
\]
the left-hand side is bounded from below by
\[
\frac{1}{M^p}
\int_{\mathbb R^n}
|\nabla f(x)|^p\,d\mu(x)
=
\frac{1}{M^p}\|\nabla f\|_{L^p(\mu)}^p.
\]
Combining the two bounds gives
\[
\frac{1}{M^p}\|\nabla f\|_{L^p(\mu)}^p
\le
(2C(p)d^2)^p\|f\|_{L^p(\mu)}^p.
\]
Taking the power $1/p$ gives the claimed estimate.
\end{proof}

Combining Theorem~\ref{th-3} and Theorem~\ref{th-2}
proves Theorem~\ref{th-4}.

\section{Freud-type densities}

\subsection{Integration by parts with respect to Freud measures}

For $m\in\mathbb{N}$ and $f\in C^\infty_P(\mathbb R^n)$, set
\[
L_m f
:=
\Delta f-2m\sum_{j=1}^n x_j^{2m-1}\partial_{x_j}f.
\]
For $j\in\{1,\ldots,n\}$, define
\[
\delta_{m,j}f
:=
2m x_j^{2m-1}f-\partial_{x_j}f.
\]
For a vector field $u\in C^\infty_P(\mathbb R^n,\mathbb R^n)$, define
\[
\delta_m u
:=
\sum_{j=1}^n\delta_{m,j}u_j
=
2m\sum_{j=1}^n x_j^{2m-1}u_j-\operatorname{div}u.
\]
Then
\[
\delta_m\nabla f=-L_m f.
\]

For $f,g\in C^\infty_P(\mathbb R^n)$, integration by parts gives
\[
\int_{\mathbb R^n}\partial_{x_j}f\,g\,d\nu_m^n
=
\int_{\mathbb R^n} f\,\delta_{m,j}g\,d\nu_m^n.
\]
Consequently, for $f\in C^\infty_P(\mathbb R^n)$ and
$u\in C^\infty_P(\mathbb R^n,\mathbb R^n)$,
\begin{equation}\label{eq-freud-ibp-1}
\int_{\mathbb R^n}\langle \nabla f,u\rangle\,d\nu_m^n
=
\int_{\mathbb R^n} f\,\delta_m u\,d\nu_m^n.
\end{equation}

We will also use the following analogue of the Gaussian identity
\eqref{int-by-parts-2}. Since
\[
\partial_{x_j}(\delta_m u)
=
\sum_{k=1}^n\delta_{m,k}\partial_{x_j}u_k
+
2m(2m-1)x_j^{2m-2}u_j,
\]
we obtain
\begin{align}
\label{eq-freud-ibp-2}
\int_{\mathbb R^n}(\delta_m u)^2\,d\nu_m^n
&=
\int_{\mathbb R^n}
\sum_{j=1}^n u_j\partial_{x_j}(\delta_m u)\,d\nu_m^n
\\
&=
2m(2m-1)
\int_{\mathbb R^n}
\sum_{j=1}^n x_j^{2m-2}u_j^2\,d\nu_m^n
+
\int_{\mathbb R^n}
\sum_{j,k=1}^n
\partial_{x_k}u_j\,\partial_{x_j}u_k\,d\nu_m^n
\notag
\\
&\le
2m(2m-1)
\int_{\mathbb R^n}
\sum_{j=1}^n x_j^{2m-2}u_j^2\,d\nu_m^n
+
\int_{\mathbb R^n}
\sum_{j,k=1}^n
(\partial_{x_k}u_j)^2\,d\nu_m^n.
\notag
\end{align}

\subsection{The $L^2$ Markov--Bernstein inequality}

\begin{theorem}\label{th-5}
Let $m\in\mathbb N$. There exists
a constant $C(m)>0$ such that, for every $n,d\in\mathbb N$ and every
$f\in\mathcal P_d(\mathbb R^n)$, one has
\[
\|\nabla f\|_{L^2(\nu_m^n)}
\le
C(m)d^{1-\frac1{2m}}
\|f\|_{L^2(\nu_m^n)}.
\]
\end{theorem}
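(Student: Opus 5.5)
The plan is to reduce the $n$-dimensional $L^2$ estimate to a \emph{frequency-localized} one-dimensional estimate. That one-dimensional estimate is then summed over coordinates using the orthonormal product basis. The point of localizing in frequency is that applying \eqref{Freud-MB} directly in each variable bounds every $\|\partial_{x_j}f\|^2$ by the full $\|f\|^2$, and summing over $j$ would lose a factor $n$.

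\textbf{Step 1 (one-dimensional spectral bound).} Let $\{p_k\}_{k\ge0}$ be the orthonormal polynomials of $\nu_m$, and set $\gamma:=2-\frac1m$. I will prove that for every polynomial $g=\sum_k c_kp_k$,
\[
\int_{\mathbb R}(g')^2\,d\nu_m\le C(m)\sum_{k\ge1}k^{\gamma}|c_k|^2 .
\]
Split $g-c_0$ dyadically as $g=c_0+\sum_{l\ge0}g_l$, where $g_l$ is the part of the expansion with $2^l\le k<2^{l+1}$. Since $g_l\in\mathcal P_{2^{l+1}}(\mathbb R)$, estimate \eqref{Freud-MB} with $p=2$ gives
\[
\|g_l'\|_{L^2(\nu_m)}\le C(m)2^{l(1-\frac1{2m})}\|g_l\|_{L^2(\nu_m)} .
\]
The key observation is \emph{almost orthogonality} of the pieces $g_l'$. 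By the one-dimensional case of \eqref{eq-freud-ibp-1}, $\langle g_l',h\rangle=\langle g_l,\delta_{m}h\rangle$, and $\delta_m h=2mt^{2m-1}h-h'$ has degree at most $\deg h+2m-1$. Hence $g_l'$ is orthogonal to $\mathcal P_{2^l-2m}$, so its expansion lives in degrees $[2^l-2m+1,2^{l+1})$.

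For $l\ge l_0(m)$, each such degree window meets only a bounded number (say two) of the other windows. Therefore
\[
\Bigl\|\sum_{l\ge l_0} g_l'\Bigr\|^2\le 3\sum_{l\ge l_0}\|g_l'\|^2 .
\]
The finitely many pieces $l<l_0$ are handled by the triangle inequality with an $m$-dependent constant. Combining these bounds gives the displayed estimate, since $2^{l\gamma}\|g_l\|^2\le\sum_{2^l\le k<2^{l+1}}k^\gamma|c_k|^2$.

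\textbf{Step 2 (tensorization).} Write $f=\sum_{|\alpha|\le d}a_\alpha P_\alpha$ with $P_\alpha(x)=\prod_i p_{\alpha_i}(x_i)$. Fix $j$ and group the terms as $f=\sum_\beta g_\beta(x_j)Q_\beta(x_{-j})$, where $\beta$ ranges over multi-indices in the other variables and $\{Q_\beta\}$ is orthonormal in $L^2(\nu_m^{n-1})$. Then $\int(\partial_{x_j}f)^2\,d\nu_m^n=\sum_\beta\int(g_\beta')^2\,d\nu_m$. Step 1 then gives
\[
\int(\partial_{x_j}f)^2\,d\nu_m^n\le C(m)\sum_\alpha\alpha_j^{\gamma}|a_\alpha|^2 .
\]
Summing over $j$ and using that $\gamma\ge1$ (so that $\sum_j\alpha_j^\gamma\le|\alpha|^\gamma\le d^\gamma$), we obtain
\[
\|\nabla f\|_{L^2(\nu_m^n)}^2\le C(m)d^{2-\frac1m}\sum_\alpha|a_\alpha|^2=C(m)d^{2-\frac1m}\|f\|_{L^2(\nu_m^n)}^2 .
\]
This is the claim of Theorem~\ref{th-5}.

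\textbf{Main obstacle.} The delicate point is Step 1: summing the dyadic bounds without a logarithmic loss. The degree-window argument above, based on the adjoint relation $\partial^*=\delta_m$ increasing degree by at most $2m-1$, is what I would rely on. If it fails, a fallback is to use the explicit structure of $p_k'$: it is a combination of $p_{k-1},p_{k-3},\dots$, with the lower terms controlled via Freud recurrence asymptotics. I expect this to be messier.
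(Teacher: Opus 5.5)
Your proof is correct, and it differs from the paper's only in Step~1. Step~2 is exactly the paper's argument: you tensorize through the orthonormal product basis and use $\sum_j\alpha_j^{\gamma}\le|\alpha|^{\gamma}$ for $\gamma=2-\frac1m\ge1$. Step~1 rests on the same key fact as the paper, namely that the adjoint $\delta_m$ of differentiation raises degree by at most $2m-1$. What differs is the granularity at which you use it.

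The paper works with single basis elements. Integration by parts gives $\int u_r'u_j\,d\nu_m=2m\int t^{2m-1}u_ru_j\,d\nu_m$, so $u_r'\in\operatorname{span}\{u_{r-2m+1},\ldots,u_{r-1}\}$. Thus the differentiation matrix is banded with bandwidth $2m-1$, and one Cauchy--Schwarz gives $\|f'\|^2\le(2m-1)\sum_r a_r^2\|u_r'\|^2$. After that, \eqref{Freud-MB} is applied only to the individual orthonormal polynomials $u_r$.

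You instead group degrees into dyadic blocks, apply \eqref{Freud-MB} to each block, and use the degree-window localization to get almost orthogonality of the $g_l'$. Your window bookkeeping is right. If $l'\ge l+2$ and $2^{l+1}\ge 2m-2$, then $g_l'\in\mathcal P_{2^{l+1}-2}\subset\mathcal P_{2^{l'}-2m}$, which is orthogonal to $g_{l'}'$. The finitely many small blocks cost only an $m$-dependent constant.

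What each route buys:
\begin{itemize}
\item The paper's version is shorter, needs no special treatment of small $l$, gives the explicit factor $2m-1$, and needs the one-dimensional Markov--Bernstein bound only for the orthonormal polynomials themselves.
\item Your dyadic version is somewhat more robust: it would tolerate a degree shift that grows sublinearly in the degree, whereas the banded Cauchy--Schwarz would lose a factor equal to the bandwidth.
\end{itemize}

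The fallback you expected to be messy is essentially what the paper does, and it needs no recurrence asymptotics. The same integration by parts shows that the expansion of $p_k'$ terminates after at most $2m-1$ terms.
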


\begin{proof}
We first prove a one-dimensional coefficient estimate. Let
$\{u_j\}_{j\ge0}$ be the orthonormal polynomial system in $L^2(\nu_m)$, where
$u_j$ has degree $j$. For $0\le j<r$, set
\[
b_{j,r}:=
\int_{\mathbb R}u_r'u_j\,d\nu_m.
\]
Since $u_r'$ has degree at most $r-1$, we have
\[
u_r'=\sum_{j=0}^{r-1}b_{j,r}u_j.
\]
Integrating by parts gives
\[
\int_{\mathbb R}u_r'u_j\,d\nu_m
=
-\int_{\mathbb R}u_ru_j'\,d\nu_m
+
2m\int_{\mathbb R}u_r(t)u_j(t)t^{2m-1}\,\nu_m(dt).
\]
The first integral on the right-hand side is zero, since
$u_j'$ has degree at most $j-1<r$ and $u_r$ is orthogonal to all polynomials
of degree smaller than $r$. Hence
\[
b_{j,r}
=
2m\int_{\mathbb R}u_r(t)u_j(t)t^{2m-1}\,\nu_m(dt).
\]
Now $t^{2m-1}u_j(t)$ has degree at most $j+2m-1$. Therefore it is orthogonal
to $u_r$ whenever $r>j+2m-1$. Thus
\[
b_{j,r}=0
\quad
\text{unless}
\quad
j<r<j+2m.
\]

Let $f\in\mathcal P_d(\mathbb R)$ and write
\[
f=\sum_{r=0}^d a_ru_r.
\]
Then
\[
f'
=
\sum_{r=1}^d a_ru_r'
=
\sum_{j=0}^{d-1}
\Bigl(
\sum_{\substack{r\in\{1,\ldots,d\}\\ j<r<j+2m}}
a_rb_{j,r}
\Bigr)u_j.
\]
By orthonormality and the Cauchy--Schwarz inequality,
\begin{align*}
\|f'\|_{L^2(\nu_m)}^2
&=
\sum_{j=0}^{d-1}
\Bigl(
\sum_{\substack{r\in\{1,\ldots,d\}\\ j<r<j+2m}}
a_rb_{j,r}
\Bigr)^2
\le
(2m-1)
\sum_{j=0}^{d-1}
\sum_{\substack{r\in\{1,\ldots,d\}\\ j<r<j+2m}}
a_r^2 b_{j,r}^2
\\
&\le
(2m-1)
\sum_{r=1}^d a_r^2\sum_{j=0}^{r-1} b_{j,r}^2
=
(2m-1)
\sum_{r=1}^d a_r^2\|u_r'\|_{L^2(\nu_m)}^2.
\end{align*}
By the one-dimensional Markov--Bernstein inequality for Freud weights \eqref{Freud-MB}, applied
to the polynomial $u_r$, we have
\[
\|u_r'\|_{L^2(\nu_m)}
\le
C_1(m)r^{1-\frac1{2m}}\|u_r\|_{L^2(\nu_m)}
=
C_1(m)r^{1-\frac1{2m}}.
\]
Consequently,
\begin{equation}\label{eq-1-dim-MB-Freud-coeff}
\|f'\|_{L^2(\nu_m)}^2
\le
C_2(m)
\sum_{r=1}^d r^{2-\frac1m}a_r^2.
\end{equation}

We now pass to the product measure. For a multi-index
$\mathbf r=(r_1,\ldots,r_n)\in\mathbb N_0^n$, set
\[
|\mathbf r|:=r_1+\ldots+r_n
\]
and
\[
U_{\mathbf r}(x):=
u_{r_1}(x_1)\ldots u_{r_n}(x_n).
\]
The system $\{U_{\mathbf r}\colon |\mathbf r|\le d\}$ is an orthonormal basis of
$\mathcal P_d(\mathbb R^n)$ in $L^2(\nu_m^n)$. Hence every polynomial
$f\in\mathcal P_d(\mathbb R^n)$ can be written as
\[
f=\sum_{|\mathbf r|\le d}a_{\mathbf r}U_{\mathbf r}.
\]

Fix $j\in\{1,\ldots,n\}$. For
\[
\widehat{\mathbf r}
=
(r_1,\ldots,r_{j-1},r_{j+1},\ldots,r_n)
\quad
\text{and}
\quad
\widehat x
=
(x_1,\ldots,x_{j-1},x_{j+1},\ldots,x_n),
\]
write
\[
U_{\widehat{\mathbf r}}(\widehat x)
=
\prod_{\ell\ne j}u_{r_\ell}(x_\ell).
\]
Then
\[
f(x)
=
\sum_{|\widehat{\mathbf r}|\le d}
U_{\widehat{\mathbf r}}(\widehat x)
f_{\widehat{\mathbf r}}(x_j),
\]
where
\[
f_{\widehat{\mathbf r}}(x_j)
:=
\sum_{r=0}^{d-|\widehat{\mathbf r}|}
a_{(\widehat{\mathbf r},r)}u_r(x_j).
\]
Here $a_{(\widehat{\mathbf r},r)}$ denotes the coefficient corresponding to
the multi-index whose $j$-th coordinate is $r$ and whose remaining coordinates
are given by $\widehat{\mathbf r}$. Therefore,
\[
\partial_{x_j}f(x)
=
\sum_{|\widehat{\mathbf r}|\le d}
U_{\widehat{\mathbf r}}(\widehat x)
f_{\widehat{\mathbf r}}'(x_j).
\]
Using the orthonormality of the functions $U_{\widehat{\mathbf r}}$ in
$L^2(\nu_m^{n-1})$, we obtain
\[
\int_{\mathbb R^n}
|\partial_{x_j}f|^2\,d\nu_m^n
=
\sum_{|\widehat{\mathbf r}|\le d}
\int_{\mathbb R}|f_{\widehat{\mathbf r}}'|^2\,d\nu_m.
\]
Applying \eqref{eq-1-dim-MB-Freud-coeff} to each
$f_{\widehat{\mathbf r}}$, we get
\[
\int_{\mathbb R^n}
|\partial_{x_j}f|^2\,d\nu_m^n
\le
C_2(m)
\sum_{|\widehat{\mathbf r}|\le d}
\sum_{r=0}^{d-|\widehat{\mathbf r}|}
r^{2-\frac1m}|a_{(\widehat{\mathbf r},r)}|^2
=
C_2(m)
\sum_{|\mathbf r|\le d}
r_j^{2-\frac1m}|a_{\mathbf r}|^2.
\]
Summing over $j\in\{1,\ldots,n\}$, we obtain
\[
\int_{\mathbb R^n}|\nabla f|^2\,d\nu_m^n
=
\sum_{j=1}^n
\int_{\mathbb R^n}|\partial_{x_j}f|^2\,d\nu_m^n
\le
C_2(m)
\sum_{|\mathbf r|\le d}
\Bigl(\sum_{j=1}^n r_j^{2-\frac1m}\Bigr)|a_{\mathbf r}|^2.
\]
Since $2-1/m\ge1$, we have
\[
\sum_{j=1}^n r_j^{2-\frac1m}
\le
\Bigl(\sum_{j=1}^n r_j\Bigr)^{2-\frac1m}
=
|\mathbf r|^{2-\frac1m}.
\]
Therefore,
\[
\int_{\mathbb R^n}|\nabla f|^2\,d\nu_m^n
\le
C_2(m)
\sum_{|\mathbf r|\le d}
|\mathbf r|^{2-\frac1m}|a_{\mathbf r}|^2
\le
C_2(m)d^{2-\frac1m}
\sum_{|\mathbf r|\le d}|a_{\mathbf r}|^2
=
C_2(m)d^{2-\frac1m}
\int_{\mathbb R^n}|f|^2\,d\nu_m^n.
\]
Taking square roots proves the theorem.
\end{proof}

\subsection{Key lemmas}

\begin{lemma}\label{lem-4-1}
Let $m\in\mathbb N$ and let $r\ge q>0$. There exists a constant
$C:=C(m,r,q)>0$ such that, for every $n,d\in\mathbb N$ and every
$f\in\mathcal P_d(\mathbb R^n)$, one has
\begin{align*}
&\|\nabla f\|_{L^{r+2}(\nu_m^n)}^{r+2}
\\
&\le
C
\|f|\nabla f|^{r-q}\|_{L^2(\nu_m^n)}
\biggl(
\int_{\mathbb R^n}
|\nabla f|^{2q}
\sum_{j=1}^n x_j^{2m-2}(\partial_{x_j}f)^2\,d\nu_m^n
+
\int_{\mathbb R^n}
|\nabla f|^{2q}\|D^2f\|_{\rm HS}^2\,d\nu_m^n
\biggr)^{1/2}.
\end{align*}
\end{lemma}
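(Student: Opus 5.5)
The plan is to repeat the proof of Lemma~\ref{lem-1-1} almost verbatim, replacing Gaussian integration by parts with the Freud formulas \eqref{eq-freud-ibp-1} and \eqref{eq-freud-ibp-2}. Take the same cutoff $\eta_\varepsilon$ and the same vector fields
\[
u_{a,\varepsilon}:=|\nabla f|^a\eta_\varepsilon(|\nabla f|)\nabla f\in C^\infty_P(\mathbb R^n,\mathbb R^n),
\]
set to zero on $\{|\nabla f|\le\varepsilon\}$. The algebraic identity underlying \eqref{eq-delta-u} survives unchanged. Indeed, for a scalar $\varphi$ one has $\delta_m(\varphi u)=\varphi\,\delta_m u-\langle\nabla\varphi,u\rangle$, exactly as for $\delta$, since the drift term $2m\sum_j x_j^{2m-1}u_j$ is linear in $u$. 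Taking $\varphi=|\nabla f|^{r-q}$ and $u=u_{q,\varepsilon}$ gives
\[
\delta_m u_{r,\varepsilon}=|\nabla f|^{r-q}\Bigl(\delta_m u_{q,\varepsilon}-(r-q)|\nabla f|^{q-2}\eta_\varepsilon(|\nabla f|)\langle D^2f\nabla f,\nabla f\rangle\Bigr).
\]

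Next, apply \eqref{eq-freud-ibp-1} to $\int|\nabla f|^{r+2}\eta_\varepsilon(|\nabla f|)\,d\nu_m^n=\int\langle\nabla f,u_{r,\varepsilon}\rangle\,d\nu_m^n=\int f\,\delta_m u_{r,\varepsilon}\,d\nu_m^n$. Then use Cauchy--Schwarz to split off $\|f|\nabla f|^{r-q}\|_{L^2(\nu_m^n)}$. The second factor is at most
\[
2\int(\delta_m u_{q,\varepsilon})^2\,d\nu_m^n+2(r-q)^2\int|\nabla f|^{2q-2}|D^2f\nabla f|^2\,d\nu_m^n .
\]

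To bound $\int(\delta_m u_{q,\varepsilon})^2$, use \eqref{eq-freud-ibp-2}. This produces two terms.
\begin{itemize}
\item The zero-order term is $2m(2m-1)\int\sum_j x_j^{2m-2}(u_{q,\varepsilon})_j^2\,d\nu_m^n$. It is bounded by $2m(2m-1)\int|\nabla f|^{2q}\sum_j x_j^{2m-2}(\partial_{x_j}f)^2\,d\nu_m^n$, because $\eta_\varepsilon\le1$. This is the first term in the statement, and it replaces the Gaussian term $|\nabla f|^{2q+2}$.
\item The derivative term $\sum_{j,k}(\partial_{x_k}(u_{q,\varepsilon})_j)^2$ is computed exactly as in Lemma~\ref{lem-1-1}. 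It gives $2|\nabla f|^{2q}\|D^2f\|_{\rm HS}^2+2\bigl(q+2\|\eta'\|_\infty I_{\{\varepsilon\le|\nabla f|\le2\varepsilon\}}\bigr)^2|\nabla f|^{2q-2}|D^2f\nabla f|^2$.
\end{itemize}

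The only new simplification compared with the Gaussian lemma is the pointwise bound $|D^2f\nabla f|^2\le\|D^2f\|_{\rm HS}^2|\nabla f|^2$. It turns every term of the form $|\nabla f|^{2q-2}|D^2f\nabla f|^2$ into $|\nabla f|^{2q}\|D^2f\|_{\rm HS}^2$. Consequently the whole bracket is dominated by
\[
C(m,r,q)\Bigl(\int|\nabla f|^{2q}\sum_jx_j^{2m-2}(\partial_{x_j}f)^2\,d\nu_m^n+\int|\nabla f|^{2q}\|D^2f\|_{\rm HS}^2\,d\nu_m^n\Bigr),
\]
with $\varepsilon$-independent constants, where we use $\|\eta'\|_\infty<\infty$.

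Finally, let $\varepsilon\to0$. On the left, monotone convergence applies to $|\nabla f|^{r+2}I_{\{|\nabla f|\ge2\varepsilon\}}$. On the right, the dominating functions are polynomially bounded and hence integrable against $\nu_m^n$, so dominated convergence applies.

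I expect the main technical point to be justifying that $u_{a,\varepsilon}\in C^\infty_P$, so that \eqref{eq-freud-ibp-1} and \eqref{eq-freud-ibp-2} are legitimate. This holds because on the support of $\eta_\varepsilon(|\nabla f|)$ the function $|\nabla f|^a$ is a smooth function of the polynomial $|\nabla f|^2$ bounded away from zero, and all its derivatives grow at most polynomially. Since $\nu_m$ has super-Gaussian tails, no boundary terms arise. Everything else is bookkeeping identical to Lemma~\ref{lem-1-1}, and keeping track of the precise constant is unnecessary because the statement allows an unspecified $C(m,r,q)$.
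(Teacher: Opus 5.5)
Your proposal is correct and follows essentially the same route as the paper: the same cutoff fields $u_{a,\varepsilon}$, the factorization of $\delta_m u_{r,\varepsilon}$, integration by parts plus Cauchy--Schwarz, \eqref{eq-freud-ibp-2} with the zero-order term giving the $x_j^{2m-2}$ integral, and the bound $|D^2f\nabla f|^2\le|\nabla f|^2\|D^2f\|_{\rm HS}^2$ to absorb all Hessian terms. The only minor difference is that you absorb $\|\eta'\|_\infty$ into $C(m,r,q)$ (legitimate, since $\eta$ is fixed once and for all), whereas the paper keeps the factor $1+\|\eta'\|_\infty^2 I_{\{\varepsilon\le|\nabla f|\le2\varepsilon\}}$ and passes to the limit by dominated convergence.
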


\begin{proof}The proof is the same as in the Gaussian case, with
\eqref{eq-freud-ibp-2} replacing \eqref{int-by-parts-2}.
Let $\eta\in C^\infty(\mathbb R)$ satisfy
\[
\eta(t)=0 \quad \text{for } |t|\le 1,
\quad
\eta(t)=1 \quad \text{for } |t|\ge 2,
\quad
0\le \eta(t)\le 1  \quad \text{for all } t\in\mathbb R.
\]
For $\varepsilon>0$, set $\eta_\varepsilon(t):=\eta(t/\varepsilon)$. For
$a>0$, define
\[
u_{a,\varepsilon}
:=
|\nabla f|^a\eta_\varepsilon(|\nabla f|)\nabla f.
\]
The expression is understood as zero on the set
$\{|\nabla f|\le \varepsilon\}$. With this convention, we have
$u_{a,\varepsilon}\in C^\infty_P(\mathbb R^n,\mathbb R^n)$.

A direct computation gives
\[
\delta_m u_{a,\varepsilon}
=
-|\nabla f|^a\eta_\varepsilon(|\nabla f|)L_mf
-
\bigl(
a|\nabla f|^{a-2}\eta_\varepsilon(|\nabla f|)
+
|\nabla f|^{a-1}\eta_\varepsilon'(|\nabla f|)
\bigr)
\langle D^2f\nabla f,\nabla f\rangle .
\]
Therefore,
\[
\delta_m u_{r,\varepsilon}
=
|\nabla f|^{r-q}
\bigl(
\delta_m u_{q,\varepsilon}
-
(r-q)|\nabla f|^{q-2}\eta_\varepsilon(|\nabla f|)
\langle D^2f\nabla f,\nabla f\rangle
\bigr).
\]

Using the integration by parts formula \eqref{eq-freud-ibp-1}, we obtain
\[
\int_{\mathbb R^n}
|\nabla f|^{r+2}I_{\{|\nabla f|\ge 2\varepsilon\}}\,
d\nu_m^n
\le
\int_{\mathbb R^n}
|\nabla f|^{r+2}\eta_\varepsilon(|\nabla f|)\,
d\nu_m^n
=
\int_{\mathbb R^n}
f\,\delta_m u_{r,\varepsilon}\,d\nu_m^n.
\]
Hence, using
\[
\langle D^2f\nabla f,\nabla f\rangle^2
\le
|\nabla f|^4\|D^2f\|_{\rm HS}^2,
\]
and applying the Cauchy--Schwarz inequality, we obtain
\begin{align*}
&\int_{\mathbb R^n}
|\nabla f|^{r+2}I_{\{|\nabla f|\ge 2\varepsilon\}}\,
d\nu_m^n
\\
&\le
C_1(r,q)
\|f|\nabla f|^{r-q}\|_{L^2(\nu_m^n)}
\biggl(
\int_{\mathbb R^n}
(\delta_m u_{q,\varepsilon})^2\,d\nu_m^n
+
\int_{\mathbb R^n}
|\nabla f|^{2q}
\|D^2f\|_{\rm HS}^2\,
d\nu_m^n
\biggr)^{1/2}.
\end{align*}

By
\eqref{eq-freud-ibp-2},
\begin{align*}
\int_{\mathbb R^n}
(\delta_m u_{q,\varepsilon})^2\,d\nu_m^n
&\le
C_2(m)
\int_{\mathbb R^n}
|\nabla f|^{2q}\sum_{j=1}^n x_j^{2m-2}(\partial_{x_j} f)^2\,
d\nu_m^n
+
\int_{\mathbb R^n}
\sum_{j,k=1}^n
\bigl(\partial_{x_k}(u_{q,\varepsilon})_j\bigr)^2\,
d\nu_m^n.
\end{align*}

It remains to estimate the last term. As in the Gaussian case, 
\[
\partial_{x_k}(u_{q,\varepsilon})_j
=
|\nabla f|^q\eta_\varepsilon(|\nabla f|)
\partial_{x_kx_j}^2f
+
\bigl(q|\nabla f|^{q-2}\eta_\varepsilon(|\nabla f|)
+
|\nabla f|^{q-1}\eta_\varepsilon'(|\nabla f|)\bigr)
\langle \nabla\partial_{x_k}f,\nabla f\rangle
\partial_{x_j}f
\]
and
\[
\sum_{j,k=1}^n
\bigl(\partial_{x_k}(u_{q,\varepsilon})_j\bigr)^2
\le
C(q)
\bigl(
1+
\|\eta'\|_\infty^2 I_{\{\varepsilon\le |\nabla f|\le 2\varepsilon\}}
\bigr)
|\nabla f|^{2q}\|D^2f\|_{\rm HS}^2.
\]
Combining the preceding estimates and passing to the limit as
$\varepsilon\to0$, with Lebesgue's dominated convergence theorem applied as in
the Gaussian case, gives the desired estimate.
\end{proof}

Applying the preceding lemma with
$r=2k-2$ and
$q=k-1$,
we obtain the following estimate.

\begin{corollary}\label{cor-4-1}
Let $m,k\in\mathbb N$, $k\ge2$. There exists a constant
$C=C(k,m)>0$ such that, for every $n,d\in\mathbb N$ and every
$f\in\mathcal P_d(\mathbb R^n)$, one has
\begin{align*}
&\|\nabla f\|_{L^{2k}(\nu_m^n)}^{2k}
\\
&\le
C\|f|\nabla f|^{k-1}\|_{L^2(\nu_m^n)}
\biggl(
\int_{\mathbb R^n}
|\nabla f|^{2k-2}
\sum_{j=1}^n x_j^{2m-2}(\partial_{x_j}f)^2\,d\nu_m^n
+
\int_{\mathbb R^n}
|\nabla f|^{2k-2}\|D^2f\|_{\rm HS}^2\,d\nu_m^n
\biggr)^{1/2}.
\end{align*}
\end{corollary}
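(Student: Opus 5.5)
This is a direct specialization of Lemma~\ref{lem-4-1}, and the plan is simply to check that the parameter choice $r=2k-2$, $q=k-1$ is admissible and that the exponents line up. Since $k\ge2$, we have $q=k-1\ge1>0$ and $r=2k-2=2q\ge q$. So the hypotheses $r\ge q>0$ of Lemma~\ref{lem-4-1} are met, and the constant $C(m,r,q)$ becomes a constant depending only on $(m,k)$.

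Next we substitute the parameters. The left-hand side becomes $\|\nabla f\|_{L^{r+2}(\nu_m^n)}^{r+2}=\|\nabla f\|_{L^{2k}(\nu_m^n)}^{2k}$. In the first factor on the right, the exponent is $r-q=k-1$, giving $\|f|\nabla f|^{k-1}\|_{L^2(\nu_m^n)}$. Inside the bracket, $2q=2k-2$, so the two integrals become
\[
\int_{\mathbb R^n}|\nabla f|^{2k-2}\sum_{j=1}^n x_j^{2m-2}(\partial_{x_j}f)^2\,d\nu_m^n
\quad\text{and}\quad
\int_{\mathbb R^n}|\nabla f|^{2k-2}\|D^2f\|_{\rm HS}^2\,d\nu_m^n .
\]
This is exactly the claimed inequality.

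There is no genuine obstacle here. The only point to watch is the case $k=1$, where $q=0$ would violate $q>0$; this is precisely why the statement assumes $k\ge2$. All analytic content, namely the cutoff $\eta_\varepsilon$ near $\{\nabla f=0\}$ and the identity \eqref{eq-freud-ibp-2}, is already contained in Lemma~\ref{lem-4-1}.
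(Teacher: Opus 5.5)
Your proposal is correct and is exactly the paper's argument: the corollary is Lemma~\ref{lem-4-1} with $r=2k-2$ and $q=k-1$. This choice is admissible because $k\ge2$ gives $r\ge q\ge1>0$, and the exponents $r+2=2k$, $r-q=k-1$, $2q=2k-2$ match the statement as you checked.
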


\begin{lemma}\label{lem-4-2}
Let $m\in\mathbb N$. There exists a constant $C(m)>0$ such that, for every
$n, D\in\mathbb N$, every $j\in\{1,\ldots,n\}$, and every
$f\in\mathcal P_D(\mathbb R^n)$, one has
\[
\int_{\mathbb R^n}x_j^{2m-2}|f(x)|^2\,d\nu_m^n
\le
C(m)D^{1-\frac1m}
\|f\|_{L^2(\nu_m^n)}^2.
\]
\end{lemma}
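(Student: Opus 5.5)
Since both sides of the inequality only involve the variable $x_j$ in a product structure, the plan is to reduce to dimension one and then prove a one-dimensional weighted estimate for polynomials of degree at most $D$ with respect to $\nu_m$. Fix the other coordinates $\widehat x$. For each $\widehat x$, the function $t\mapsto f(\widehat x,t)$ is a polynomial in $t$ of degree at most $D$. If we establish
\[
\int_{\mathbb R} t^{2m-2}|g(t)|^2\,\nu_m(dt)\le C(m)D^{1-\frac1m}\int_{\mathbb R}|g|^2\,d\nu_m
\quad\forall g\in\mathcal P_D(\mathbb R),
\]
then integrating in $\widehat x$ against $\nu_m^{n-1}$ and applying Fubini gives the lemma. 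The argument is therefore entirely one-dimensional, with a constant independent of $n$.

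For the one-dimensional estimate, I would use the Mhaskar--Rakhmanov--Saff number $a_D\asymp D^{1/(2m)}$ for the weight $e^{-|t|^{2m}}$. I split $\mathbb R$ into $\{|t|\le a_{2D}\}$ (taking a slightly enlarged MRS number) and its complement.

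On $\{|t|\le a_{2D}\}$, the bound $t^{2m-2}\le a_{2D}^{2m-2}\le C(m)D^{(2m-2)/(2m)}=C(m)D^{1-\frac1m}$ holds trivially, so this part is controlled by $C(m)D^{1-1/m}\|g\|_{L^2(\nu_m)}^2$.

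The tail is the main obstacle. I would invoke the standard infinite–finite range inequality for Freud weights (Levin--Lubinsky, the same theory behind \eqref{Freud-MB}): for $P\in\mathcal P_N$,
\[
\int_{|t|\ge a_{2N}}|P|^2e^{-2|t|^{2m}}\,dt\le C e^{-cN}\int |P|^2 e^{-2|t|^{2m}}\,dt.
\]
I would apply it to the polynomial $P=t^{m-1}g$, of degree at most $D+m-1$, after passing from the weight $e^{-|t|^{2m}}$ to $W^2$ with $W=e^{-|t|^{2m}/2}$. This yields
\[
\int_{|t|\ge a}t^{2m-2}|g|^2\,d\nu_m\le Ce^{-cD}\int t^{2m-2}|g|^2\,d\nu_m.
\]
The right-hand side splits again into the inner part and the tail, and the tail is absorbed for $D\ge D_0(m)$. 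The finitely many $D<D_0(m)$ are handled by a compactness argument: the ratio is a continuous function on the unit sphere of the finite-dimensional space $\mathcal P_{D_0}(\mathbb R)$.

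An alternative that avoids citing the infinite–finite range inequality is to use the Jacobi-matrix structure from the proof of Theorem~\ref{th-5}. Multiplication by $t$ is tridiagonal in the orthonormal basis $\{u_r\}$, and the recurrence coefficients satisfy $a_r\asymp r^{1/(2m)}$ (Freud's conjecture, proved by Magnus and Lew--Quarles). Hence $\|t^{m-1}g\|_{L^2(\nu_m)}\le\|J\|^{m-1}\|g\|$, where the operator norm of the truncation of $J$ acting on $\mathcal P_{D+m}$ is $\le C(m)D^{1/(2m)}$. This gives $\|t^{m-1}g\|^2\le C(m)D^{(m-1)/m}\|g\|^2$ directly.

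I would adopt this second route as the main argument. Its only nontrivial input is the bound $a_r\le C(m)r^{1/(2m)}$ on the recurrence coefficients. That bound is elementary from the paper's $b_{j,r}$ computation: $a_{r+1}=\int t u_r u_{r+1}\,d\nu_m$ relates to $\|u_{r+1}'\|$, and \eqref{Freud-MB} bounds $\|u_{r+1}'\|$ by $C(m)r^{1-1/(2m)}$. Failing that, I would cite the known asymptotics of the recurrence coefficients.
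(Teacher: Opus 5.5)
Your argument is correct, but after the common Fubini reduction it takes a different route from the paper. The paper argues analytically. Integrating $\frac{d}{dt}\bigl(tg^2e^{-|t|^{2m}}\bigr)$ over $\mathbb R$ gives $2m\int t^{2m}g^2\,d\nu_m=\|g\|^2+2\int tgg'\,d\nu_m$. H\"older and \eqref{Freud-MB} then give $\int t^{2m}g^2\,d\nu_m\le C(m)D\|g\|^2$ by absorption, and H\"older interpolation between the $0$-th and $2m$-th moments yields the factor $D^{1-\frac1m}$. Your main route is algebraic. In the basis $\{u_r\}$, multiplication by $t$ is tridiagonal with zero diagonal (since $\nu_m$ is even), so it maps $\mathcal P_N$ into $\mathcal P_{N+1}$ with norm at most $2\max_{r\le N+1}a_r$. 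Everything then reduces to the bound $a_r\le C(m)r^{\frac1{2m}}$ on the recurrence coefficients. The paper's route needs only the one-dimensional inequality \eqref{Freud-MB}, already imported for Theorem~\ref{th-5}, and reuses the integrate-by-parts-and-absorb pattern of the rest of the paper. Yours gives at once $\|t^kg\|_{L^2(\nu_m)}\le C(k,m)D^{\frac{k}{2m}}\|g\|_{L^2(\nu_m)}$ for every $k$, and, done as below, needs no Markov--Bernstein input at all. Your first route through the infinite--finite range inequality would also work, but it imports much heavier Mhaskar--Rakhmanov--Saff machinery.

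One caution about your ``elementary'' derivation of $a_r\le C(m)r^{\frac1{2m}}$. The direct link between $a_{r+1}$ and $u_{r+1}'$ is $b_{r,r+1}=(r+1)/a_{r+1}$, obtained by comparing leading coefficients. Together with $\|u_{r+1}'\|_{L^2(\nu_m)}\ge b_{r,r+1}$ and \eqref{Freud-MB}, it gives only the \emph{lower} bound $a_{r+1}\ge c(m)(r+1)^{\frac1{2m}}$. The upper bound needs positivity. By the paper's formula, $b_{r,r+1}=2m\int t^{2m-1}u_{r+1}u_r\,d\nu_m=2m(J^{2m-1})_{r+1,r}\ge 2m\,a_{r+1}^{2m-1}$. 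This holds because $J$ has nonnegative entries and the path of length $2m-1$ bouncing along the edge $\{r,r+1\}$ contributes $a_{r+1}^{2m-1}$. Combined with $\|u_{r+1}'\|\ge b_{r,r+1}$ and \eqref{Freud-MB}, this gives $a_{r+1}\le C(m)(r+1)^{\frac1{2m}}$. Combined instead with $b_{r,r+1}=(r+1)/a_{r+1}$, it gives Freud's bound $a_{r+1}^{2m}\le (r+1)/(2m)$ directly. With this step written out, or with the citation you mention as a fallback, the proof is complete.
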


\begin{proof}
By Fubini's theorem, it is sufficient to prove the one-dimensional estimate
\begin{equation}\label{eq-one-dim-mult-needed}
\int_{\mathbb R}t^{2m-2}|g(t)|^2\,\nu_m(dt)
\le
C(m)D^{1-\frac1m}
\|g\|_{L^2(\nu_m)}^2
\end{equation}
for every $g\in\mathcal P_D(\mathbb R)$.

For $m=1$, this is immediate. Assume that $m\ge2$. If
$\|g\|_{L^2(\nu_m)}=0$, there is nothing to prove. Otherwise, since
\[
\frac{d}{dt}\bigl(tg(t)^2e^{-|t|^{2m}}\bigr)
=
g(t)^2e^{-|t|^{2m}}
+
2tg(t)g'(t)e^{-|t|^{2m}}
-
2m t^{2m}g(t)^2e^{-|t|^{2m}},
\]
integration over $\mathbb R$ gives
\[
2m\int_{\mathbb R}t^{2m}|g(t)|^2\,\nu_m(dt)
=
\|g\|_{L^2(\nu_m)}^2
+
2\int_{\mathbb R}tg(t)g'(t)\,\nu_m(dt).
\]
By H\"older's inequality with exponents
$2m$, $\frac{2m}{m-1}$, and $2$,
\[
\left|\int_{\mathbb R}tg(t)g'(t)\,\nu_m(dt)\right|
\le
\Bigl(\int_{\mathbb R}t^{2m}|g(t)|^2\,\nu_m(dt)\Bigr)^{\frac1{2m}}
\|g\|_{L^2(\nu_m)}^{1-\frac1m}
\|g'\|_{L^2(\nu_m)}.
\]
By the one-dimensional $L^2$ Markov--Bernstein inequality for Freud weights \eqref{Freud-MB},
\[
\|g'\|_{L^2(\nu_m)}
\le
C_1(m)D^{1-\frac1{2m}}\|g\|_{L^2(\nu_m)}.
\]
Therefore,
\[
\int_{\mathbb R}t^{2m}|g(t)|^2\,\nu_m(dt)
\le
\|g\|_{L^2(\nu_m)}^2
+
C_1(m)D^{1-\frac1{2m}}
\Bigl(\int_{\mathbb R}t^{2m}|g(t)|^2\,\nu_m(dt)\Bigr)^{\frac1{2m}}
\|g\|_{L^2(\nu_m)}^{2-\frac1m}.
\]
Set
\[
a:=
\frac{\int_{\mathbb R}t^{2m}|g(t)|^2\,\nu_m(dt)}
{\|g\|_{L^2(\nu_m)}^2}.
\]
Then
\[
a
\le
1+
C_1(m)D^{1-\frac1{2m}}a^{\frac1{2m}}.
\]
If $a\ge1$, then, since $D\ge1$,
\[
a
\le
1+
C_1(m)D^{1-\frac1{2m}}a^{\frac1{2m}}
\le
(1+C_1(m))D^{1-\frac1{2m}}a^{\frac1{2m}}.
\]
Therefore,
\[
a\le C_2(m)D.
\]
If $a<1$, then
\[
a<1\le (1+C_2(m))D= C_3(m)D.
\]
Thus, in all cases,
\[
a\le C_3(m)D.
\]
Hence
\[
\int_{\mathbb R}t^{2m}|g(t)|^2\,\nu_m(dt)
\le
C_3(m)D\|g\|_{L^2(\nu_m)}^2.
\]
Finally, by H\"older's inequality with exponents $\frac{m}{m-1}$ and $m$,
\[
\int_{\mathbb R}t^{2m-2}|g(t)|^2\,\nu_m(dt)
\le
\Bigl(\int_{\mathbb R}t^{2m}|g(t)|^2\,\nu_m(dt)\Bigr)^{1-\frac1m}
\|g\|_{L^2(\nu_m)}^{\frac2m}
\le
C_4(m)D^{1-\frac1m}\|g\|_{L^2(\nu_m)}^2.
\]
This proves \eqref{eq-one-dim-mult-needed}, and the product estimate follows
by Fubini's theorem.
\end{proof}

\subsection{Proof of Theorem~\ref{th-6}}

The case $k=1$ is Theorem~\ref{th-5}. Assume now that $k\ge2$.
By Corollary~\ref{cor-4-1}, we have
\begin{align}\label{eq-cor-2}
&\|\nabla f\|_{L^{2k}(\nu_m^n)}^{2k}
\le
C_1(k, m)\|f|\nabla f|^{k-1}\|_{L^2(\nu_m^n)}
\biggl(
\int_{\mathbb R^n}
|\nabla f|^{2k-2}
\sum_{j=1}^n x_j^{2m-2}(\partial_{x_j}f)^2\,d\nu_m^n
\\
&\qquad\qquad\qquad\qquad\qquad\qquad\qquad\qquad\qquad\qquad\qquad+
\int_{\mathbb R^n}
|\nabla f|^{2k-2}\|D^2f\|_{\rm HS}^2\,d\nu_m^n
\biggr)^{1/2}.\notag
\end{align}
By H\"older's inequality,
\[
\|f|\nabla f|^{k-1}\|_{L^2(\nu_m^n)}
\le
\|f\|_{L^{2k}(\nu_m^n)}
\|\nabla f\|_{L^{2k}(\nu_m^n)}^{k-1}.
\]
For each multi-index $(j_1,\ldots,j_k)\in\{1,\ldots,n\}^k$, let
\[
Q_{j_1,\ldots,j_k}
:=
\partial_{x_{j_1}}f\ldots\partial_{x_{j_k}}f.
\]
Then $Q_{j_1,\ldots,j_k}\in\mathcal P_{kd}(\mathbb R^n)$. By
Lemma~\ref{lem-1-2},
\[
\sum_{1\le j_1,\ldots,j_k\le n}
|\nabla Q_{j_1,\ldots,j_k}|^2
\ge
k|\nabla f|^{2k-2}\|D^2 f\|_{\rm HS}^2.
\]
Applying Theorem~\ref{th-5} to each $Q_{j_1,\ldots,j_k}$ and summing over
all $(j_1,\ldots,j_k)$, we obtain
\begin{align*}
\int_{\mathbb R^n}
|\nabla f|^{2k-2}\|D^2f\|_{\rm HS}^2\,d\nu_m^n
&\le
\sum_{1\le j_1,\ldots,j_k\le n}
\int_{\mathbb R^n}|\nabla Q_{j_1,\ldots,j_k}|^2\,d\nu_m^n
\\
&\le
C_2(k,m)d^{2-\frac1m}
\sum_{1\le j_1,\ldots,j_k\le n}
\int_{\mathbb R^n}|Q_{j_1,\ldots,j_k}|^2\,d\nu_m^n
\\
&=
C_2(k,m)d^{2-\frac1m}
\|\nabla f\|_{L^{2k}(\nu_m^n)}^{2k}.
\end{align*}
Similarly,
\[
|\nabla f|^{2k-2}
\sum_{j=1}^n x_j^{2m-2}(\partial_{x_j}f)^2
=
\sum_{1\le j_1,\ldots,j_k\le n}
x_{j_k}^{2m-2}|Q_{j_1,\ldots,j_k}|^2.
\]
By Lemma~\ref{lem-4-2}, applied to each $Q_{j_1,\ldots,j_k}$, we conclude
\begin{align*}
\int_{\mathbb R^n}
|\nabla f|^{2k-2}
\sum_{j=1}^n x_j^{2m-2}(\partial_{x_j}f)^2\,d\nu_m^n
&=
\sum_{1\le j_1,\ldots, j_k\le n }
\int_{\mathbb R^n}x_{j_k}^{2m-2}|Q_{j_1, \ldots, j_k}|^2\,d\nu_m^n
\\
&\le 
C_3(k, m)d^{1-\frac{1}{m}}
\sum_{1\le j_1,\ldots, j_k\le n }
\int_{\mathbb R^n}|Q_{j_1, \ldots, j_k}|^2\,d\nu_m^n
\\
&=
C_3(k, m)d^{1-\frac{1}{m}} \|\nabla f\|_{L^{2k}(\nu_m^n)}^{2k}.
\end{align*}

Substituting these estimates into \eqref{eq-cor-2}, we
obtain
\[
\|\nabla f\|_{L^{2k}(\nu_m^n)}^{2k}
\le
C_4(k,m)d^{1-\frac1{2m}}
\|f\|_{L^{2k}(\nu_m^n)}
\|\nabla f\|_{L^{2k}(\nu_m^n)}^{2k-1}.
\]
If $\|\nabla f\|_{L^{2k}(\nu_m^n)}=0$, the result is trivial.
Otherwise, dividing by
$\|\nabla f\|_{L^{2k}(\nu_m^n)}^{2k-1}$
gives the claimed estimate.
\qed

\section*{Use of AI Tools}

ChatGPT was used for language editing, stylistic suggestions, draft wording for
selected passages, and help with locating some references. All AI-generated
text and suggested references were checked, corrected where necessary, and
substantially revised by the author. The author takes full responsibility for
the content of the paper.





\section*{Acknowledgements}

The author would like to thank Sergey Tikhonov for reading the manuscript and for valuable comments and suggestions.

\smallskip

The research was supported by the AEI grant 
RYC2023-043616-I 
and by the Spanish State Research Agency, through the Severo Ochoa and Mar\'ia de Maeztu Program for Centers and
Units of Excellence in R\&D (CEX2020-001084-M).
The author thanks CERCA Programme (Generalitat de Catalunya) for institutional support.
{\sloppy
	
}

\end{document}